\newcommand{\RR}{\mathbb{R}}
\newcommand{\CC}{\mathbb{C}}
\newcommand{\ZZ}{\mathbb{Z}}
\newcommand{\OO}[1]{\mathcal{O}\paren{#1}}
\newcommand{\paren}[1]{\left( #1 \right)}
\newcommand{\brak}[1]{\left[ #1 \right]}
\newcommand{\wh}[1]{\widehat{#1}}
\newcommand{\norm}[1]{\left\lVert#1\right\rVert}
\newcommand{\abs}[1]{\left\lvert#1\right\rvert}
\renewcommand{\Re}{\operatorname{Re}}
\renewcommand{\Im}{\operatorname{Im}}
\newcommand{\bea}{\begin{eqnarray}}
\newcommand{\eea}{\end{eqnarray}}
\newcommand{\be}{\begin{equation}}
\newcommand{\ee}{\end{equation}}
\newcommand{\Schrod}{Schr\"{o}dinger\ }
\newcommand{\uhat}{\wh{u}}
\newcommand{\Vuhat}{\wh{(Vu)}}
\newcommand{\uzhat}{\wh{u}_0}
\newcommand{\phimax}{\varphi^{\max}}
\newcommand{\calE}{\mathcal{E}}
\newcommand{\calA}{\mathcal{A}}
\newcommand{\calC}{\mathcal{C}}
\newcommand{\err}{E}
\newcommand{\epsm}{\bm{\epsilon}}
\newcommand{\uzhatk}{(\wh{u_0})_k}
\newcommand{\nr}{n_r}
\newcommand{\NE}{N^{(\calE)}}
\newcommand{\NC}{N^{(\calC)}}
\newcommand{\NT}{n^{(c)}}
\newtheorem{theorem}{Theorem}
\newtheorem{lemma}{Lemma}
\newtheorem{remark}{Remark}
\newtheorem{definition}{Definition}
\title{A high-order integral equation-based solver for the time-dependent \Schrod
equation}
\author[1,2]{Jason Kaye \thanks{Email: jkaye@flatironinstitute.org}}
\author[1]{Alex Barnett}
\author[1,2]{Leslie Greengard}
\affil[1]{Flatiron Institute, Simons Foundation}
\affil[2]{Courant Institute of Mathematical Sciences,
    New York University}
\date{}
\begin{document}

\maketitle

\begin{abstract}

  We introduce a numerical method for the solution of the time-dependent
  \Schrod equation with a smooth potential, based on its reformulation as a Volterra
  integral equation. We present versions of the method both for periodic
  boundary conditions, and for free space problems with compactly supported
  initial data and potential. A spatially uniform
  electric field may be included, making the solver applicable to
  simulations of light-matter interaction.

  The primary computational challenge in using the Volterra formulation
  is the application of a space-time history dependent integral
  operator. This may be accomplished by projecting the solution onto a
  set of Fourier modes, and updating their coefficients from one time
  step to the next by a simple recurrence. In the periodic case, the
  modes are those of the usual Fourier series, and the fast Fourier
  transform (FFT) is used to alternate between physical and frequency
  domain grids. In the free space case, the oscillatory behavior of the
  spectral Green's function leads us to use a set of complex-frequency Fourier
  modes obtained by discretizing a contour deformation of the
  inverse Fourier transform, and we develop a corresponding fast transform based
  on the FFT.

  Our approach is related to pseudo-spectral methods, but applied to an
  integral rather than the usual differential formulation. This has
  several advantages: it avoids the need for artificial boundary
  conditions, admits simple, inexpensive high-order implicit time
  marching schemes, and naturally includes time-dependent
  potentials. We present examples in one and two dimensions showing
  spectral accuracy in space and eighth-order accuracy in time for both
  periodic and free space problems.
  
\end{abstract}

\section{Introduction}

We consider the numerical solution of the non-dimensionalized
$d$-dimensional time-dependent \Schrod
equation (TDSE) with a uniform advective potential, given by
\be
\begin{aligned}
  \label{eq:schrodfree}
    i \partial_t u(x,t) &= - \nabla^2 u(x,t) +
    V(x,t) \, u(x,t) + i A(t) \cdot \nabla u(x,t), 
    \quad  x\in D \subseteq \RR^d, \quad t \in (0,T], \\
    u(x,0) &= u_0(x),\qquad x\in D. \\
\end{aligned}
\ee
Here, $u$ is a complex-valued
wavefunction, $V$ a $C^\infty$-smooth
scalar binding or scattering potential, $A:
[0,T] \to \RR^d$ a $C^\infty$ electromagnetic vector potential,
and $u_0$ a $C^\infty$ initial wavefunction with $\|u_0\|_{L^2(D)}=1$.
The first term on
the right hand side corresponds to the kinetic energy of the system, and
the second to the potential energy. The third term 
is of particular interest in simulations of light-matter
interaction, in which $A$ is often taken to be spatially uniform---%
the so-called dipole approximation \cite{bandrauk13}---and induces a spatially uniform electric field. When $V = 0$ and $A = 0$, we refer to \eqref{eq:schrodfree} as the 
  free particle equation, and when $V = 0$ but $A \neq 0$,
  we refer to it as the free particle equation with advection.

We will consider both the periodic and free space formulations of
\eqref{eq:schrodfree}. In the periodic case, we take 
$D = [-\pi,\pi]^d$, and assume that $u_0$, $V$ and $u$ are spatially periodic on this
domain.
In the free space case, we take $D=\RR^d$, and assume that $u(\cdot,t)$ is in
the Schwartz space for each $t$,
and that $u_0$ and $V$ are compactly supported in the box
$[-1,1]^d$. A purely time-dependent function may be added
to $V$ by making a gauge transformation of $u$.

Note that an equivalent formulation of \eqref{eq:schrodfree} can be obtained by 
removing the gradient term $A(t) \cdot \nabla u(x,t)$ and
adding an unbounded term of the form $E(t) \cdot x$ to 
$V(x,t)$. This is typically
referred to as the {\em length gauge} formulation, and ours above as
the {\em velocity gauge} formulation \cite{bandrauk13}.

The literature on the numerical solution of the TDSE
is extensive, and we refer the reader to 
\cite{leforestier91,blanes00,lubich02_2,kormann08,blanes15,blanes17,bader18,iserles18} 
for good summaries of the state of the art. The papers
\cite{castro04,kidd17,pueyo18} provide careful comparisons of a selection of methods in
the context of time-dependent density functional theory. 
Before describing our approach in detail,
it is worth noting that the dominant framework for existing
numerical methods involves implementing a direct
approximation of the {\em unitary single time step propagator}. More specifically,
assuming first that $A = 0$ and $V = V(x)$ is time-independent, the propagator
is given by the formula
\begin{equation} \label{eq:tindprop}
  u(\cdot,t + \Delta t) = e^{-i \mathcal{H} \Delta t} u(\cdot,t).
\end{equation}
Here $\mathcal{H} = -\nabla^2 + V$ is the constant system Hamiltonian. A typical
method of this type involves discretizing $\mathcal{H}$
and, at each time step, applying the resulting matrix exponential to a
vector by one of many approaches, which include
operator splitting, polynomial approximation of the exponential
by Taylor expansion or Chebyshev interpolation, and Lanczos
iteration \cite{leforestier91}. For the general case with time-dependent $V$
and the electromagnetic field term included, the unitary solution operator in
the length gauge is given by
\begin{equation} \label{eq:tdepprop}
  u(\cdot,t+\Delta t) = \mathcal{T} \paren{e^{-i \int_t^{t+\Delta t}
  \mathcal{H}(s) \,
ds}} u(\cdot,t).
\end{equation}
Here $\mathcal{T}$ is the time-ordering symbol, which is needed to correct for the
lack of commutativity of the Hamiltonian operator $\mathcal{H}(t)$ at different
points in time \cite[Sec. 3.6]{lin19}.
Implementing the propagator in this form
is impractical, and instead it is typical to use a ``Magnus" or ``quasi-Magnus"
expansion to reduce this formula to one of the form \eqref{eq:tindprop}, with a more
complicated time-independent Hamiltonian 
$\mathcal{H}$ \cite{magnus,iserles00,blanes00_2,hochbruck03,iserles18}.

Here, we explore an alternative approach, which we describe first for the free
space case $D=\RR^d$.
If $V = 0$, then the solution of \eqref{eq:schrodfree} is
given by the explicit integral representation
\begin{equation} \label{eq:uhomogsoln}
  u(x,t) = \int_{\RR^d} G(x-y,t,0) u_0(y) \, dy,
\end{equation}
where $G(x,t,s)$ is the Green's function for the free particle
TDSE with advection \cite{ermolaev99,kaye18},
\begin{equation} \label{eq:gfun}
  G(x,t,s) := \frac{\exp\paren{i \abs{x + \varphi(t) - \varphi(s)}^2 /
4 (t-s)}}{(4 \pi i (t-s))^{d/2}}
\end{equation}
with
\be
\varphi(t) := \int_0^t A(s) \, ds~.
\label{phi}
\ee
This Green's function reduces to the ordinary free particle Green's function when $A = 0$.
The formula \eqref{eq:uhomogsoln} may be viewed as a realization of
the formal propagator discussed above in the free particle setting.
However, rather than including the potential energy
term in the propagator, we will treat it as a source term for the
free particle equation.
This leads to the following Volterra-type integral equation, which is
called the Duhamel principle in the mathematics literature 
and the Lippmann--Schwinger equation in physics:
\begin{equation} \label{eq:lippmannschwinger}
  u(x,t) = \int_{\RR^d} G(x-y,t,0) u_0(y) \, dy
  \;-\;
  i \int_0^t \int_{\RR^d} G(x-y,t,s) (Vu)(y,s) \, dy \, ds.
\end{equation}
Here we have used the notation $(V u)(x,t) \equiv V(x,t)
u(x,t)$.
It is straightforward to verify 
that \eqref{eq:lippmannschwinger} satisfies \eqref{eq:schrodfree}.
Note that \eqref{eq:lippmannschwinger} represents $u$ in terms of $u_0$ and its
history over the spatial support of $Vu$, and hence, of $V$. 
A similar formula may be obtained for the periodic problem using the periodic
Green's function.

This integral formulation offers a variety of significant benefits,
to be discussed shortly. However, as written, it does not suggest a practical computational
scheme. In particular, the potential term 
depends on the full spacetime history of the solution, and is therefore prohibitively
expensive to evaluate directly at a large collection of time steps. For
$N$ time steps on a domain discretized by $M$ points, the naive cost---even ignoring the
difficult problem of quadratures for the highly oscillatory kernel $G$---%
is at least of the order $\OO{M^2 N^2}$. 
Moreover, the $\OO{M N}$ memory required to store
the spacetime history of the solution is impractical for large-scale problems. 
Thus, in the absence
of suitable fast and memory-efficient algorithms,
the Volterra integral equation approach has been largely ignored. 
Below, we develop
spectral, fast Fourier transform (FFT)-based algorithms which reduce 
these costs to near-optimal complexity in both the periodic and free space settings.

For low-order accuracy in time, we obtain a method which in many ways resembles
classical pseudo-spectral operator splitting schemes for periodic problems. 
The similarities include spectral accuracy in space,
quasi-optimal cost, and optimal memory requirements.
However, our approach permits the application of simple
high-order accurate multistep marching schemes which require the same number of FFTs per time step as low-order
discretizations.
Furthermore, our method has the same form for time-independent and
time-dependent potentials $V$. By contrast, the construction of high-order
splitting-based schemes is rather involved even for time-independent
potentials, and more so for time-dependent potentials. For
time-independent potentials,
high-order splitting formulas with complex coefficients have
been constructed directly \cite{bandrauk06,castella09,hansenostermann,blanes13}, and
deferred correction procedures can be applied to increase the order of
accuracy of low-order splitting methods 
\cite{bourlioux,christlieb,duarte,hagstromzhou,zhang2018}. In both
cases, the cost per time step increases substantially with the order
of accuracy. For time-dependent potentials, operator splitting and other 
propagator-based methods require high-order Magnus or quasi-Magnus expansions
to handle the time-ordering operator in \eqref{eq:tdepprop}, as
mentioned above. We note that within our framework, multistage
Runge-Kutta-style schemes are also available in addition to the
multistep schemes, but for these
the cost grows with the desired order of accuracy.

Two other general properties of our method are worth noting, both of which follow
from its use of a second-kind Volterra integral equation formulation.
First, because of the $\delta$-function property of the free particle Green's
function, the linear systems generated by simple
implicit time discretizations are diagonal. As a result,
implicit time marching is no more expensive than explicit
marching. By contrast, implicit methods based on
semi-discretizing in space and recasting the PDE as a system of ODEs 
(i.e.\ the {\em method of lines} \cite[Sec.~9.2]{leveque})
typically require
the solution of a sparse linear system at each time step. Second, the method is
insensitive to over-resolution in space, since the spatial grid is only
used to discretize integral operators. Many existing methods, 
like those utilizing polynomial approximations of matrix exponentials, suffer
from stiffness induced by the large spectral range of discretizations of the 
kinetic energy operator \cite{leforestier91,blanes15,kidd17}.

In the free space setting, the integral equation approach overcomes
a more fundamental limitation of standard methods.
In particular,
numerical methods based on direct discretization of the PDE require the solution 
to be represented on a finite computational domain $\Omega$ rather than
the infinite domain $D=\RR^d$. However, it is common for the support of
the wavefunction $u(x,t)$ to radiate beyond the boundary of
any reasonably-sized domain $\Omega$, for instance when simulating the excitation of a particle
from a bound state to a continuum state by an applied field.
In this case, 
care must be taken to avoid spurious boundary reflections.
As a result, a great deal of research
has been devoted to the design of algorithms which permit the 
imposition of conditions on the boundary of $\Omega$,
assumed to enclose the support of $u_0$ and $V$, which mimic radiation
into free space.

By and large, existing approaches to the approximation of radiative boundary conditions for the TDSE fall into
two broad categories. The first consists of methods which modify the
underlying equation near the boundary of $\Omega$ so as to dampen outgoing components of
the solution. These ``absorbing region" methods include the method
of mask functions, complex absorbing potentials,
exterior complex scaling, and perfectly matched layers
\cite{degiovannini15,weinmuller17,antoine17}. They are by far the more
common approach in practical calculations. While these methods are, in
principle, straightforward to combine with existing propagation schemes and
are often effective, they
typically involve parameters whose tuning is problem-dependent,
making them difficult to use in a robust manner. 
In the second category are methods which implement exact 
transparent boundary conditions (TBCs), for
which the associated solution is equal to the
restriction of the free space solution to the computational domain. The exact
conditions come with a mathematical guarantee of correctness, but are
prohibitively expensive to implement without suitable fast, memory-efficient 
algorithms. A variety of such algorithms have been proposed, mostly
for the case in which $A = 0$ and the computational domain is taken to
be an interval in $\RR$ \cite{baskakov91,lubich02,jiang04,schadle06}, a disk
in $\RR^2$ \cite{han04,jiang08}, or a ball in $\RR^3$ \cite{han07}.
Fewer efficient algorithms exist for the more
computationally convenient case of a rectangle in $\RR^2$
\cite{feshchenko11}, a box in
$\RR^3$ \cite{feshchenko13}, or arbitrary domains
\cite{ermolaev99,schadle02,kaye18}. Some work has extended these approaches
to the case in which
$A \neq 0$ \cite{ermolaev99,lorin09,vaibhav15,feshchenko17,kaye18}, but
corresponding fast algorithms are again lacking, particularly for dimensions greater
than one.
Finally, we note that there are methods which make purely
local approximations of exact conditions \cite{engquist77,antoine01,antoine04,antoine08}, and methods
which implement exact, nonlocal TBCs for specific time discretization schemes
\cite{arnold03,arnold12,ji18}. The 
papers \cite{antoine08,antoine17} contain useful introductions to many of the methods
mentioned above, and a more thorough collection of references.

Nevertheless, although significant progress has been made,
the accurate treatment of artificial boundaries remains an
ongoing challenge in large-scale simulations.
Using the formula \eqref{eq:lippmannschwinger}, the issue of artificial boundary
conditions is avoided entirely,
since the spatial integrals can simply be truncated to a box containing the support of
$u_0$ and $V$. 
This benefit has been noted by others
\cite{hoffman90,sharafeddin92}, but has not been exploited previously because 
of the computational obstacles discussed above. 
This was the primary motivation for the present work.

The derivation of our method begins from the 
Fourier domain representation of the equation \eqref{eq:lippmannschwinger}, which
leads to a system of Volterra integral equations coupled
through the potential $V$. These integral equations can be
rewritten in recurrence form, permitting the Fourier representation 
to be advanced analytically for one time step, with a local update.
In the periodic case, $u(x,t)$ is simply represented as a Fourier series, and 
the recurrence relation applies to the 
discrete Fourier coefficients. The spatial coupling
induced by the potential $V$ is computed in the physical domain, in the style
of a pseudo-spectral method, with the FFT used to 
accelerate the mapping between the physical and frequency domains. If
the box $D=[-\pi,\pi]^d$ is discretized by $M$ grid points per dimension,
then the cost per time step is 
$\OO{M^d \log M}$, and the memory requirements are of the order $\OO{M^d}$, 
as in standard pseudo-spectral methods.

In the free space case, the 
classical Fourier integral representation of $u(x,t)$ is so oscillatory that
the corresponding method would require $\OO{M^{2d} (\log M) T}$ work per
time step. We will show that, by a suitable {\em
contour deformation of the Fourier integral}
into the complex plane, we can obtain
a significantly more efficient representation. A recurrence can still be
used to advance the resulting complex-frequency
coefficients, and an FFT-based algorithm allows us to accelerate
the transform between the 
physical domain and these coefficients. If $A =
0$, the asymptotic cost of the resulting method per time step is only slightly larger than
that for the periodic case: it is $\OO{M \log M + \log T}$ per time
step in one dimension, $\OO{M^2 \log M + M \log T + \log^2 T}$ in two
dimensions, and $\OO{M^3 \log M + M^2 \log T + M \log^2 T + \log^3 T}$
in three dimensions. For applied
fields $A(t)$ for which the so-called {\em quiver radius}---the maximum
advective excursion of a free wavepacket---is larger than the domain size, the
cost of the method scales quasi-linearly with the quiver radius in
each dimension as well. Thus, for linearly-polarized fields, the cost grows
by a factor approximately equal to the quiver radius.
The memory requirements are also near-optimal, of the order $\OO{\paren{M + \log T}^d}$. 

We begin by considering the periodic case in Section \ref{sec:periodic}
and show how the integral equation viewpoint leads to simple high-order
time marching methods. There is significant overlap between this method
and that for the free space case, presented in Section \ref{sec:free},
but the context is simpler. In particular, whereas in the periodic case
we use the standard FFT to move between the physical and frequency
domains, in the free space case we require a more specialized fast
algorithm to move between the physical domain and a complex-frequency
domain. This algorithm, based on the FFT, is presented in Section
\ref{sec:fftgamma}. In Section \ref{sec:analysis}, we provide a detailed
analysis of the computational cost associated with our complex-frequency
representation of the solution. Section \ref{sec:results} contains
demonstrations of a high-order accurate implementation of our method for several model problems.

\section{The periodic case} \label{sec:periodic}

We recall that any smooth periodic function $f(x)$ on $[-\pi,\pi]^d$
can be represented as a Fourier series 
\[ f(x) = \sum_{k \in \ZZ^d} \wh{f}_k e^{i k \cdot x},\]
with Fourier coefficients given by the periodic Fourier transform
\begin{equation} \label{eq:ftper}
  \wh{f}_k := \frac{1}{(2 \pi)^d} \int_{[-\pi,\pi]^d} e^{-i k \cdot x} f(x) \, dx
\end{equation}
for $k \in \ZZ^d$. 
Suppose now $u$ satisfies \eqref{eq:schrodfree} with periodic boundary
conditions, and smooth, periodic $u_0$ and $V$. 
We can represent $u$ as a Fourier series:
\[u(x,t) = \sum_{k \in \ZZ^d} \uhat_k(t)
e^{i k \cdot x}.\]
Taking the periodic Fourier transform of the
governing equation, we find that each
$\uhat_k$ satisfies an ordinary differential equation (ODE):
\begin{equation*}
  \begin{aligned}
    i \uhat_k'(t) &= \paren{\|k\|^2 - k \cdot A(t)} \uhat_k(t) +
    \Vuhat_k(t), \quad t \in (0,T], \\
    \uhat_k(0) &= \uzhatk. &
  \end{aligned}
\end{equation*}
$(Vu)(x,t)$ is itself periodic in space and, like
$u(x,t)$, may be represented by a Fourier series. 
Treating $\Vuhat_k$ as an inhomogeneity, we can solve this ODE
by the variation of parameters formula. We obtain
\begin{equation} \label{eq:uhatkinteq}
  \uhat_k(t) = e^{-i \|k\|^2 t + i k \cdot \varphi(t)} \uzhatk - i
  \int_0^t e^{-i \|k\|^2 (t-s) + i k \cdot (\varphi(t) - \varphi(s))}
  \Vuhat_k(s) \, ds.
\end{equation}
Note that
\eqref{eq:uhatkinteq} is simply the Fourier transform of
the periodic version of the Duhamel formula \eqref{eq:lippmannschwinger}. 
It represents $\uhat_k(t)$
in terms of initial data and $Vu$. When $V = 0$, it
is an explicit formula for $\uhat_k(t)$. Otherwise, it is 
impractical for computation, as written, because it couples
$\uhat_k(t)$ to its entire spacetime history. 

\subsection{The periodic marching scheme} \label{sec:perscheme}

We start by observing that \eqref{eq:uhatkinteq} can be 
reformulated as a recurrence in time.

\begin{lemma}[Discrete spectral evolution] \label{lem:discspecevol}
Let $\Delta t > 0$ be a time step size. The evolution formula \eqref{eq:uhatkinteq}
can be written without explicit history dependence in the form
\begin{equation}\label{eq:uhatkrecur}
  \uhat_k(t) = e^{-i \|k\|^2 \Delta t + i k \cdot \paren{\varphi(t) -
  \varphi(t-\Delta t)}} \uhat_k(t-\Delta t) - i \int_{t - \Delta t}^t
  e^{-i \|k\|^2 (t-s) +
  i k \cdot (\varphi(t) - \varphi(s))} \Vuhat_k(s) \, ds.
\end{equation}
Using the two-point trapezoidal rule for the update integral, we obtain
  the following recurrence:
\begin{equation}\label{eq:uhatkrecurdisc}
\uhat_k(t) + i \frac{\Delta t}{2} \Vuhat_k(t) \approx e^{-i \|k\|^2
\Delta t + i k \cdot \paren{\varphi(t) - \varphi(t-\Delta t)}} \paren{
  \uhat_k(t-\Delta t)  - i  \frac{\Delta t}{2} \Vuhat_k(t - \Delta t)}.
\end{equation}
\end{lemma}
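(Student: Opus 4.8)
The plan is to derive \eqref{eq:uhatkrecur} by a one-step telescoping of the closed-form evolution \eqref{eq:uhatkinteq}, and then to obtain \eqref{eq:uhatkrecurdisc} by evaluating the resulting single-step integral with the trapezoidal rule. For the first part I would write \eqref{eq:uhatkinteq} at both times $t$ and $t-\Delta t$ and multiply the $t-\Delta t$ identity through by the propagation factor $P := e^{-i\norm{k}^2\Delta t + ik\cdot(\varphi(t)-\varphi(t-\Delta t))}$. The only computation to check is that the exponents combine additively: $P\cdot e^{-i\norm{k}^2(t-\Delta t-s)+ik\cdot(\varphi(t-\Delta t)-\varphi(s))} = e^{-i\norm{k}^2(t-s)+ik\cdot(\varphi(t)-\varphi(s))}$ and $P\cdot e^{-i\norm{k}^2(t-\Delta t)+ik\cdot\varphi(t-\Delta t)} = e^{-i\norm{k}^2 t+ik\cdot\varphi(t)}$, both immediate from additivity of $\varphi(t)=\int_0^t A$. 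Hence $P\,\uhat_k(t-\Delta t)$ and $\uhat_k(t)$ have the same homogeneous term and the same integrand in their history integrals, over $[0,t-\Delta t]$ and $[0,t]$ respectively; subtracting cancels the homogeneous term and leaves only the integral over $[t-\Delta t,t]$, which is \eqref{eq:uhatkrecur}. This is precisely where the ``no explicit history dependence'' claim is realized: the entire past of the solution is packaged into the single value $\uhat_k(t-\Delta t)$.

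For the discretized recurrence I would apply the two-point trapezoidal rule to the update integral $\int_{t-\Delta t}^t e^{-i\norm{k}^2(t-s)+ik\cdot(\varphi(t)-\varphi(s))}\,\Vuhat_k(s)\,ds$. Its integrand equals $\Vuhat_k(t)$ at the right endpoint $s=t$, since the kernel is $1$ there, and equals $P\,\Vuhat_k(t-\Delta t)$ at the left endpoint $s=t-\Delta t$. Substituting $\tfrac{\Delta t}{2}$ times the sum of these two endpoint values into \eqref{eq:uhatkrecur}, factoring $P$ out of the two right-hand terms that carry it, and moving the $\tfrac{\Delta t}{2}\Vuhat_k(t)$ term to the left side yields \eqref{eq:uhatkrecurdisc}.

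I do not expect a genuine obstacle here; the content is bookkeeping of the $\norm{k}^2$ and $\varphi$ increments together with the observation that the update kernel is exactly $1$ at $s=t$ --- this last fact being what makes the implicit left-hand side of \eqref{eq:uhatkrecurdisc} diagonal in the Fourier index $k$, and hence trivially solvable for $\uhat_k(t)$. The one point meriting a line of care is that $\varphi$ is only assumed $C^\infty$ and is not periodic in $t$, but since $\varphi(t)-\varphi(t-\Delta t)=\int_{t-\Delta t}^t A(s)\,ds$ the telescoping identities above hold verbatim regardless.
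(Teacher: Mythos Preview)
Your proposal is correct and matches the paper's proof essentially line for line: the paper also splits the history integral at $t-\Delta t$, factors out the propagation factor $P$, and recognizes the bracketed expression as $\uhat_k(t-\Delta t)$ to obtain \eqref{eq:uhatkrecur}, then applies the two-point trapezoidal rule exactly as you describe to get \eqref{eq:uhatkrecurdisc}. Your additional remarks about the kernel equaling $1$ at $s=t$ (making the implicit step diagonal) and the irrelevance of periodicity in $t$ are correct and consistent with the paper's subsequent discussion.
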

\begin{proof}
  The equation
  \eqref{eq:uhatkinteq} may be rewritten as
\begin{multline*}
  \uhat_k(t) = e^{-i \|k\|^2 \Delta t + i k \cdot \paren{\varphi(t) -
  \varphi(t-\Delta t)}} \\ 
    \times \left(e^{-i \|k\|^2 (t-\Delta t) + i k \cdot
     \varphi(t - \Delta t)} \uzhatk
    - i \int_0^{t-\Delta t} e^{-i \|k\|^2 (t-\Delta t-s) + i
    k \cdot (\varphi(t - \Delta t) - \varphi(s))} 
  \Vuhat_k(s) \, ds \right) \\ - i \int_{t - \Delta t}^t e^{-i \|k\|^2 (t-s) +
  i k \cdot (\varphi(t) - \varphi(s))} \Vuhat_k(s) \, ds,
\end{multline*}
  which gives \eqref{eq:uhatkrecur}. Equation \eqref{eq:uhatkrecurdisc}
follows from the quadrature $\int_{t-\Delta t}^t g(s) ds \approx
  \frac{\Delta t}{2}\paren{g(t-\Delta t)+g(t)}$.
\end{proof}
Equation \eqref{eq:uhatkrecur} states that $\uhat_k(t)$ may be represented exactly in terms of its value
$\uhat_k(t-\Delta t)$ at the previous time step
and an update integral which is local in time.
The marching rule \eqref{eq:uhatkrecurdisc} is globally second-order accurate.
A higher-order quadrature rule would yield a higher-order accurate
evolution formula, as discussed in Section \ref{sec:highorder}.

Summing the expression \eqref{eq:uhatkrecurdisc} over all Fourier modes
and dividing by the factor $1 + i \frac{\Delta t}{2} V(x,t)$, we obtain
\begin{equation} \label{eq:utrapk}
  u(x,t) \approx \frac{1}{1 + i \frac{\Delta t}{2} V(x,t)}
  \sum_{k \in \ZZ^d} e^{i k \cdot x} e^{-i \|k\|^2
  \Delta t + i k \cdot \paren{\varphi(t) - \varphi(t-\Delta t)}} \paren{
\uhat_k(t-\Delta t)  - i  \frac{\Delta t}{2} \Vuhat_k(t - \Delta t)}.
\end{equation}
This formula suggests a simple marching scheme, semi-discretized with respect to time.
To obtain
$u(x,t)$ from $u(x,t-\Delta t)$, we transform the quantity
$u(x,t - \Delta t) - i \frac{\Delta t}{2} (Vu)(x,t - \Delta t)$ 
to its Fourier representation,
multiply the $k$th mode by the 
factor indicated in \eqref{eq:utrapk}, sum the resulting Fourier series for each 
$x \in [-\pi,\pi]^d$, and divide the result by $1 + i\frac{\Delta t}{2} V(x,t)$.

To obtain a fully discrete scheme, we need to 
truncate the Fourier series in \eqref{eq:utrapk} and discretize the 
Fourier transform \eqref{eq:ftper}. For simplicity, we
write the formulas for the one-dimensional case. The
$d$-dimensional generalization is straightforward.
Let us denote the frequency truncation parameter by $M$, with $M$ even,
and let 
\[u(x,t) = \sum_{k=-\infty}^{\infty} e^{i k x}
\uhat_k(t) \approx \sum_{k=-M/2}^{M/2-1} e^{i k x}
\uhat_k(t), \]
\[(Vu)(x,t) = \sum_{k=-\infty}^{\infty} e^{i k x}
\Vuhat_k(t) \approx \sum_{k=-M/2}^{M/2-1} e^{i k x}
\Vuhat_k(t).\]
Since $u(x,t)$ and $V(x,t)$ are
smooth and periodic, their Fourier coefficients decay rapidly---
faster than any finite power of $M^{-1}$---and the
truncated representations are said to converge {\em spectrally} or 
{\em superalgebraically}.
Moreover, given M equispaced points $\{x_j\}$ on 
$[-\pi, \pi]$, $x_j = -\pi + 2 \pi j / M$ for $j =
0,\ldots,M-1$, the Fourier transform \eqref{eq:ftper}, used to compute $\uzhatk$ and
$\Vuhat_k(t)$, can be approximated with spectral
accuracy using the periodic trapezoidal rule as
\begin{equation} \label{eq:ftperdisc}
  \wh{f}_k \approx \frac{1}{M} \sum_{j=0}^{M-1} e^{-i 2 \pi j k / M}
  f(x_j),
\end{equation}
for $k = -M/2,\ldots,M/2-1$; see, for example, \cite{boyd2001,trefethen13}
and Remark \ref{rem:ptrerr} below.

Using these approximations in \eqref{eq:uhatkrecurdisc}, we obtain
\begin{equation} \label{eq:utrapkdisc}
  u(x_j,t) \approx \frac{1}{1 + i \frac{\Delta t}{2}
  V(x_j,t)} \sum_{k=-M/2}^{M/2-1} e^{i 2 \pi j k / M} e^{-i \|k\|^2
  \Delta t + i k \cdot \paren{\varphi(t) - \varphi(t-\Delta t)}} \paren{
\uhat_k(t-\Delta t)  - i  \frac{\Delta t}{2} \Vuhat_k(t - \Delta t)}.
\end{equation}
Both the discrete Fourier transform (DFT) in \eqref{eq:ftperdisc} and
the evaluation of the Fourier series at the points $\{ x_j \}$ in
\eqref{eq:utrapkdisc} (the inverse DFT) can be carried out
using the FFT with $\OO{M \log M}$ operations. In the
marching scheme, both the DFT and the inverse DFT are computed
once per time step. Thus, the overall cost of the
fully discrete algorithm is quasi-optimal at $\OO{M \log M}$ work per
time step. Since we only need to store quantities at the current and
previous time steps, the net memory requirement is $\OO{M}$. 

In summary, the Fourier-based marching scheme using the trapezoidal rule
in time is spectrally accurate in space, second-order accurate in time,
quasi-optimal in cost, and optimal in memory. The method in this form therefore
has similar features to a standard pseudo-spectral Strang
splitting method. As discussed in the introduction, the primary
advantage of our approach for the periodic problem is the simplicity of generating higher-order
schemes of various flavors. These are discussed in the next section.

\begin{remark}
  Note that we have used an implicit time discretization for the local update
  integral in \eqref{eq:uhatkrecur}. That is, the trapezoidal rule
  involves the unknown at the new time step.
  By transforming back to the physical domain in \eqref{eq:utrapk}, however, 
  the resulting system is diagonalized, so that inversion is trivial.
  This property is typical of implicit discretizations of Volterra integral
  equations arising from time-dependent parabolic PDEs
  \cite{epperson91,strain94,heatfull}. In particular, see
  \cite{epperson91} for a discussion of this phenomenon from a Green's function
  perspective. 
\end{remark}

\begin{remark} \label{rem:ptrerr}

The number $M$ of frequency modes is chosen to be equal to the number of spatial grid points
not only for simplicity or compatibility with the FFT algorithm, but because the
frequency truncation is intrinsically linked to the grid spacing
required to resolve $u(x,t)$ and $(Vu)(x,t)$ in physical space. 
Indeed, the standard result \cite{trefethen14} on the aliasing error of
the periodic trapezoidal rule \eqref{eq:ftperdisc}
is
\[\wh{f}_k - \frac{1}{M} \sum_{j=0}^{M-1} e^{-i 2 \pi j k / M}
f(x_j) = \sum_{\substack{j=-\infty \\ j \neq 0}}^\infty \wh{f}_{k+jM}~.\]
Thus it suffices to choose the number $M$ of grid points so that the sum
of the Fourier coefficients beyond $\abs{k} = M/2 - 1$ is sufficiently small. 
The rapid decay of the Fourier coefficients for smooth functions is
responsible for the superalgebraic decay mentioned above. In free
space, the relationship between the physical and Fourier domains is
more complicated and their simultaneous discretization will be more
challenging.

\end{remark}

\subsection{Higher-order time discretizations} \label{sec:highorder}

In the preceding section, we discretized the local update time integral in 
\eqref{eq:uhatkrecur} using the two-point trapezoidal rule. We extend this 
now to the broader class of linear multistep schemes, analogous to Adams-type
methods for ODEs \cite[Sec. 5.9]{leveque}.
These lead to high-order marching schemes at a
negligible additional cost.
By way of a brief review, let us consider an update integral like
that in \eqref{eq:uhatkrecur}, which we write more simply for the moment as
\[\int_{t - \Delta t}^t g(s) \, ds.\]
We can approximate $g(s)$ by a polynomial interpolant using its values at
several previous time steps. If the current value $g(t)$ is included in the 
interpolant, the resulting method is said to be implicit; otherwise it is
explicit. More precisely, to generate a
$n$th-order accurate implicit method, we construct a polynomial $p(s)$
of degree at most $n-1$ satisfying the interpolation conditions
\[p(t - j \Delta t) = g(t - j \Delta t),
\qquad
j = 0,\ldots,n-1.
\]
The coefficients of $p(s)$ may be found in terms of the values $\{g(t - j \Delta
t)\}_{j=0}^{n-1}$ by solving a Vandermonde system. Replacing $g(s)$ by
$p(s)$ in the integral and integrating exactly, we find
\[\int_{t - \Delta t}^t g(s) \, ds \approx \int_{t - \Delta t}^t p(s) \,
ds = \Delta t \sum_{j=0}^{n-1} \mu_j g(t - j \Delta t)\]
for some coefficients $\{\mu_j\}_{j=0}^{n-1}$. The coefficients for the
implicit Adams methods up to fifth-order are listed in \cite[Sec.
5.9]{leveque}. The second-order method is the trapezoidal rule used
before, with coefficients $\mu_0 = \mu_1 = 1/2$. Table \ref{tab:adamsimp8} gives the coefficients of
the eighth-order method, which will be used for our numerical experiments in
Section \ref{sec:results}.

\begin{table}[t]
  \renewcommand*{\arraystretch}{1.2}
  \centering
  \begin{tabular}{|c|r|r|r|r|r|r|r|r|}
    \hline
    $j$ & 0 & 1 & 2 & 3 & 4 & 5 & 6 & 7 \\ \hline
    $\mu_j$ & $\frac{5257}{17280}$ & $\frac{139849}{120960}$ &
    $-\frac{4511}{4480}$ & $\frac{123133}{120960}$ &
    $-\frac{88547}{120960}$ & $\frac{1537}{4480}$ &
    $-\frac{11351}{120960}$ & $\frac{275}{24192}$   \\
    \hline
  \end{tabular}
  \caption{Coefficients of the 8th-order implicit Adams method.}
  \label{tab:adamsimp8}
\end{table}

Using this approximation in \eqref{eq:uhatkrecur} yields
\begin{multline*}
  \uhat_k(t) + i \mu_0 \Delta t \Vuhat_k(t) \approx e^{-i \|k\|^2
\Delta t + i k \cdot \paren{\varphi(t) - \varphi(t-\Delta t)}} \uhat_k(t-\Delta t)
  \\ - i \Delta t \sum_{j=1}^{n-1} \mu_j e^{-i \|k\|^2 j \Delta t + i k
  \cdot \paren{\varphi(t) - \varphi(t-j\Delta t)}}  \Vuhat_k(t - j \Delta
  t)
\end{multline*}
  
in place of \eqref{eq:uhatkrecurdisc}, leading to 
\begin{multline} \label{eq:uadamsk}
u(x,t) \approx \frac{1}{1 + i \mu_0 \Delta t V(x,t)}
  \sum_{k
  \in \ZZ^d} e^{i k \cdot x} \Bigg[ e^{-i \|k\|^2
\Delta t + i k \cdot \paren{\varphi(t) - \varphi(t-\Delta t)}}
  \uhat_k(t-\Delta t)  \\
   - i \Delta t \sum_{j=1}^{n-1} \mu_j e^{-i \|k\|^2 j \Delta t + i k
  \cdot \paren{\varphi(t) - \varphi(t-j\Delta t)}}  \Vuhat_k(t - j \Delta
  t)  \Bigg]
\end{multline}
in place of \eqref{eq:utrapk}.

The semi-discrete and fully discrete marching schemes follow from these
formulas in the same manner as before, with the caveat that
\eqref{eq:uadamsk} is only valid for $t \geq (n-1) \Delta t$. As with all multistep
methods, we therefore need an alternative initialization method to obtain
the first $n-2$ time steps with sufficient accuracy that subsequent calculations 
retain the overall $n$th-order accuracy of the scheme.
There are many possible approaches, but a simple method is
iterated Richardson extrapolation \cite[Sec. 3.4.6]{dahlquist08} based on the second-order
trapezoidal rule. As an example, we illustrate the procedure for a
single time step at eighth-order accuracy.

Given that we have completed the simulation up to time $t - \Delta t$,
let $u_0^{(0)}$, $u_0^{(1)}$, $u_0^{(2)}$, and $u_0^{(3)}$ be the
approximations of $u(x,t)$ obtained by the second-order trapezoidal rule
starting from $u(x,t-\Delta t)$
with one step of size $\Delta t$, two steps of size $\Delta t / 2$, four
steps of size $\Delta t / 4$, and eight steps of size $\Delta t
/ 8$, respectively. These may be combined to obtain a
collection of fourth-order accurate approximations $u_1^{(0)}$,
$u_1^{(1)}$, and $u_1^{(2)}$ of $u(x,t)$ by the following formulas:
\[u_1^{(0)} = \frac{2^2 u_0^{(1)} - u_0^{(0)}}{2^2-1}, \qquad u_1^{(1)} = \frac{2^2 u_0^{(2)} -
u_0^{(1)}}{2^2-1}, \qquad u_1^{(2)} = \frac{2^2 u_0^{(3)} - u_0^{(2)}}{2^2-1}.\]
These may be subsequently combined to obtain sixth-order accurate
approximations $u_2^{(0)}$ and $u_2^{(1)}$:
\[u_2^{(0)} = \frac{2^4 u_1^{(1)} - u_1^{(0)}}{2^4-1}, \qquad u_2^{(1)} = \frac{2^4 u_1^{(2)} -
u_1^{(1)}}{2^4-1}.\]
An eighth-order accurate approximation $u_3^{(0)}$ of $u(x,t)$ is then
given by
\[u_3^{(0)} = \frac{2^6 u_2^{(1)} - u_2^{(0)}}{2^6-1}.\]
Note that we were able to skip odd orders in the extrapolation procedure
because the error expansion of the trapezoidal rule contains only even
powers in $\Delta t$. Seven steps of the above procedure must be carried
out to initialize the eighth-order implicit Adams
method. The iterated Richardson extrapolation approach may be
generalized to build a single-step
method of any even order $n$, which can then be used to initialize the $n$th order implicit Adams
method.

The dominant cost of the multistep method is
is that of computing one FFT and one inverse FFT
per time step, just as for the trapezoidal rule-based method, regardless
of the order of accuracy $n$.
One could also derive multistage, Runge-Kutta-style schemes by discretizing the local
update integral using a quadrature rule involving intermediate time points. The
resulting methods would be more expensive, but might have different
stability properties. We have not yet analyzed and compared the various possible schemes.

\section{The free space case} \label{sec:free}

The derivation of the semi-discrete marching scheme for the free space case
is virtually identical to that of the periodic case once the
periodic Fourier series is replaced by the continuous inverse Fourier transform.
The difficulty appears only once we consider the 
fully discrete scheme. Naively discretizing $u(x,t)$ and $\wh{u}(\xi,t)$ on grids
in physical and Fourier space, respectively, results in a highly
inefficient method. A marching scheme preserving the favorable properties of
the periodic algorithm will be obtained by deforming the contour of
integration defining the inverse Fourier transform.

We will require the Fourier transform of the free-space Green's function \eqref{eq:gfun},
which we will refer to as the {\em spectral Green's function}:
\[\wh{G}(\xi,t,s) = e^{-i \| \xi \|^2 (t-s) + i \xi \cdot (\varphi(t) - \varphi(s))}.\]
This function already played an important role in the periodic case. 

Suppose now that $u$ satisfies \eqref{eq:schrodfree} with
$D = \RR^d$ in the Schwartz space,
with the $C^\infty$-smooth functions $u_0, V$ supported in the box 
$[-1,1]^d$. $u(x,t)$ may be represented via the
Fourier transform,
\begin{equation}\label{eq:uiftrep}
  u(x,t) = \frac{1}{(2 \pi)^d} \int_{\RR^d} e^{i \xi \cdot x} \uhat(\xi,t) \, d \xi,
\end{equation}
with the definition
\begin{equation}\label{eq:ftfree}
  \wh{f}(\xi) := \int_{\RR^d} e^{-i \xi \cdot x} f(x) \, dx.
\end{equation}
Analogous to the periodic case, $\uhat(\xi,t)$ satisfies an ODE in time,
\begin{align*}
  \begin{aligned}
    i \partial_t \uhat(\xi,t) &= \paren{\|\xi\|^2 - \xi \cdot A(t)} \uhat(\xi,t) + \wh{(Vu)}(\xi,t), 
    \quad & t \in (0,T], \\
    \uhat(\xi,0) &= \wh{u}_0(\xi), &
  \end{aligned}
\end{align*}
which we again write in integral form as
\begin{equation} \label{eq:uhatAinteq}
  \uhat(\xi,t) = e^{-i \| \xi \|^2 t + i \xi \cdot \varphi(t)} \wh{u}_0(\xi) - i \int_0^t e^{-i 
\| \xi \|^2 (t-s) + i \xi \cdot (\varphi(t) - \varphi(s))} \wh{(Vu)}(\xi,s) \, ds.
\end{equation}
This is the Fourier transform of the
Duhamel formula \eqref{eq:lippmannschwinger}. 

\subsection{The free space marching scheme using the classical Fourier
transform}
\label{sec:realmarch}

As for the periodic case, we can rewrite \eqref{eq:uhatAinteq} as a
recurrence in time.

\begin{lemma}[Continuous spectral evolution]
The evolution formula \eqref{eq:uhatAinteq}
can be written without explicit history dependence in the form
\begin{equation}\label{eq:uhatxirecur}
  \uhat(\xi,t) = e^{-i \| \xi \|^2 \Delta t + i \xi \cdot \paren{\varphi(t) -
  \varphi(t-\Delta t)}} \uhat(\xi,t-\Delta t) - i \int_{t - \Delta t}^t
  e^{-i \| \xi \|^2 (t-s) +
  i \xi \cdot (\varphi(t) - \varphi(s))} \Vuhat(\xi,s) \, ds.
\end{equation}
Using the trapezoidal rule for the update integral, we obtain the 
following recurrence:
\begin{equation} \label{eq:uhatxi}
  \uhat(\xi,t) \approx e^{-i \| \xi \|^2
\Delta t + i \xi \cdot \paren{\varphi(t) - \varphi(t-\Delta t)}} \paren{
  \uhat(\xi,t-\Delta t)  - i  \frac{\Delta t}{2} \Vuhat(\xi,t - \Delta
t)} - i  \frac{\Delta t}{2} \Vuhat(\xi,t).
\end{equation}
\end{lemma}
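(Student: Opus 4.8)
The plan is to mirror, essentially verbatim, the proof of Lemma~\ref{lem:discspecevol}, replacing the periodic Fourier coefficients by the continuous transform \eqref{eq:ftfree}. The only structural fact required is the composition (semigroup) property of the spectral Green's function,
\[
  \wh{G}(\xi,t,s) = \wh{G}(\xi,t,r)\,\wh{G}(\xi,r,s) \qquad \text{for all } r,
\]
which is immediate from the closed form $\wh{G}(\xi,t,s) = e^{-i\|\xi\|^2(t-s) + i \xi\cdot(\varphi(t)-\varphi(s))}$, since the exponent is additive in the time increments $t-r$ and $r-s$.

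First I would split the history integral in \eqref{eq:uhatAinteq} at the intermediate time $t-\Delta t$, writing $\int_0^t = \int_0^{t-\Delta t} + \int_{t-\Delta t}^t$, and correspondingly factor $\wh{G}(\xi,t,0) = \wh{G}(\xi,t,t-\Delta t)\,\wh{G}(\xi,t-\Delta t,0)$ in the initial-data term and $\wh{G}(\xi,t,s) = \wh{G}(\xi,t,t-\Delta t)\,\wh{G}(\xi,t-\Delta t,s)$ in the integrand over $[0,t-\Delta t]$. Pulling the common factor $\wh{G}(\xi,t,t-\Delta t) = e^{-i\|\xi\|^2\Delta t + i\xi\cdot(\varphi(t)-\varphi(t-\Delta t))}$ out of the $\wh{u}_0(\xi)$ term and the $\int_0^{t-\Delta t}$ term, the bracketed expression that remains is precisely the right-hand side of \eqref{eq:uhatAinteq} evaluated at $t-\Delta t$, i.e.\ $\uhat(\xi,t-\Delta t)$. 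This produces the history-free recurrence \eqref{eq:uhatxirecur}. The manipulations are justified because $u(\cdot,s)$ is Schwartz and $V$ is $C^\infty$ with compact support, so $s \mapsto \Vuhat(\xi,s)$ is continuous and the splitting of the integral and the rearrangement of exponential factors are elementary.

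For the second equation, I would apply the two-point trapezoidal rule $\int_{t-\Delta t}^t g(s)\,ds \approx \frac{\Delta t}{2}\paren{g(t-\Delta t) + g(t)}$ to the local update integral in \eqref{eq:uhatxirecur}, whose integrand is $g(s) = \wh{G}(\xi,t,s)\,\Vuhat(\xi,s)$. At the endpoint $s=t$ the kernel factor is $\wh{G}(\xi,t,t) = 1$, which yields the explicit term $-i\frac{\Delta t}{2}\Vuhat(\xi,t)$; at $s = t-\Delta t$ the kernel factor is $\wh{G}(\xi,t,t-\Delta t)$, which combines with the leading propagator to give the bracketed factor in \eqref{eq:uhatxi}. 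This yields \eqref{eq:uhatxi} and completes the proof.

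I do not expect a genuine obstacle here: the entire content is the bookkeeping of the additive exponents via the semigroup identity for $\wh{G}$, and the argument is the free-space counterpart of the periodic derivation already carried out. The only point worth flagging is cosmetic: unlike \eqref{eq:uhatkrecurdisc}, the implicit endpoint contribution $-i\frac{\Delta t}{2}\Vuhat(\xi,t)$ is left on the right-hand side of \eqref{eq:uhatxi}; the pointwise-in-$x$ diagonal solve that renders this implicit step trivial will be performed after transforming back to physical space, exactly as in the periodic case.
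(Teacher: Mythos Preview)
Your proposal is correct and follows exactly the paper's approach: the paper's proof consists of the single sentence ``The proof is identical to that in Lemma~\ref{lem:discspecevol} for the periodic case,'' and your write-up simply spells out that identical argument (split the history integral at $t-\Delta t$, use the additivity of the exponent to factor out $\wh{G}(\xi,t,t-\Delta t)$ and recognize $\uhat(\xi,t-\Delta t)$, then apply the two-point trapezoidal rule).
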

\begin{proof}
  The proof is identical to that in Lemma \ref{lem:discspecevol} for the periodic
  case. 
\end{proof}

Applying the inverse Fourier transform to
\eqref{eq:uhatxi}, we obtain the analogue of \eqref{eq:utrapk}, namely
\begin{equation} \label{eq:utrapxi}
  u(x,t) \approx \frac{1}{1 + i \frac{\Delta t}{2} V(x,t)}
  \int_{\RR^d} e^{i \xi \cdot x} e^{-i \| \xi \|^2 
  \Delta t + i \xi \cdot \paren{\varphi(t) - \varphi(t-\Delta t)}} \paren{
\uhat(\xi,t-\Delta t)  - i  \frac{\Delta t}{2} \Vuhat(\xi,t - \Delta
t)}.
\end{equation}
This suggests a semi-discrete marching scheme analogous to that for the
periodic case. Note that while the support of $u(x,t)$ in general extends beyond
$[-1,1]^d$, it need never be
evaluated outside the support of $V$. Indeed, given $\uhat(\xi,t-\Delta
t)$ and $\Vuhat(\xi,t - \Delta t)$, $\Vuhat(\xi,t)$ may be computed
by evaluating \eqref{eq:utrapxi} inside the support of $V$, multiplying
pointwise by $V(x,t)$, and applying the Fourier transform \eqref{eq:ftfree} to $(Vu)(x,t)$.
Then $\uhat(\xi,t)$ may be computed using \eqref{eq:uhatxi} instead of the
Fourier transform formula, which would require sampling $u(x,t)$ far outside
$[-1,1]^d$. This procedure describes a time step of a semi-discrete
$\OO{\Delta t^2}$ scheme. In particular, no artificial boundary conditions are needed.

Let us now consider the discretization of \eqref{eq:uhatxi} and \eqref{eq:utrapxi} 
in the physical and Fourier variables.
In the periodic
case, discretization in the Fourier domain amounted to truncating 
the rapidly converging Fourier series representations for $u(x,t)$ and $(Vu)(x,t)$. 
Here, again letting $d = 1$ for simplicity, we must discretize the inverse Fourier
transforms 
\begin{equation} \label{eq:iftureal}
  u(x,t) = \frac{1}{2\pi} \int_{-\infty}^\infty e^{i \xi x}
\uhat(\xi,t) \, d \xi
\end{equation}
and
\begin{equation} \label{eq:iftVureal}
(Vu)(x,t) = \frac{1}{2\pi} \int_{-\infty}^\infty e^{i \xi x}
\Vuhat(\xi,t) \, d \xi.
\end{equation}
Since $(Vu)(x,t)$ is smooth and compactly supported for each $t$, its Fourier transform is
rapidly decaying and non-oscillatory, and the discretization of
\eqref{eq:iftVureal} is straightforward. To understand the cost of discretizing
\eqref{eq:iftureal}, we can analyze the behavior of $\uhat(\xi,t)$ using
\eqref{eq:uhatAinteq}.
We assume for the moment that $A = 0$, in which case
\eqref{eq:uhatAinteq} takes the simpler form
\begin{equation} \label{eq:uhatinteq}
  \uhat(\xi,t) = e^{-i \xi^2 t} \uzhat(\xi) - i \int_0^t e^{-i
  \xi^2 (t-s)} \Vuhat(\xi,s) \, ds.
\end{equation}
$\uzhat$ is rapidly decaying like $\Vuhat$ because $u_0$ is smooth, so 
$\uhat$ is rapidly decaying as well, and \eqref{eq:iftureal} may be truncated at
some value $|\xi| = K_0$, i.e.\
\begin{equation}\label{eq:ifturealtrunc}
  u(x,t) \approx \frac{1}{2 \pi} \int_{-K_0}^{K_0} e^{i \xi x} \uhat(\xi,t)
\, d \xi,
\end{equation}
with superalgebraic convergence in the parameter $K_0$.
This implies, as discussed in Remark \ref{rem:ptrerr} for the periodic
case, that $u(x,t)$ may be resolved on $[-1,1]$ using a grid with $M =
\OO{K_0}$ points. However, unlike $\uzhat$ and $\Vuhat$, which are
non-oscillatory due to the compact support of $u_0$ and $Vu$, {\em
$\uhat(\xi,t)$ is highly oscillatory, requiring $\OO{K_0^2 T}$ grid
points to be resolved for all $t \in [0,T]$.}
Indeed, the behavior of $\uhat(\xi,t)$ is inherited from that of the
spectral Green's function $\wh{G}(\xi,t) = e^{-i \xi^2 t}$ according to
\eqref{eq:uhatinteq}, and $\wh{G}(\xi,t)$ has
$\OO{K_0^2 t}$ oscillations in $[-K_0,K_0]$ (see the top panels of
Figure~\ref{fig:specgfungamma} for an illustration).
Thus, we cannot
accurately discretize \eqref{eq:ifturealtrunc}, and therefore
\eqref{eq:iftureal}, for all $t \in [0,T]$
by a uniform quadrature grid of fewer than $\OO{K_0^2 T}$ nodes.
The best one can hope for 
using the classical Fourier transform is a marching scheme that requires
$\OO{M^2 T}$ work per time step for $M$ grid points in space---far greater 
than the $\OO{M \log M}$ cost of the periodic scheme.

The difference between the free space and periodic cases, of course,
is that the numerical support of the free space solution grows with
time, which causes oscillation in the frequency domain.
The challenge is to find a spectral representation
that is less oscillatory and can therefore be resolved with fewer degrees of freedom.

\begin{remark}
A closely related problem is that of developing a Fourier
  transform-based method for the heat equation in free space.  In
  \cite{greengard00}, it was shown that by exponentially clustering
  nodes toward $\xi = 0$, one can resolve the spectral Green's function
  by $\OO{M + \log T}$ nodes and obtain a quasi-optimal scheme.
  In that setting, the Fourier transform of
  the solution becomes sharply peaked near $\xi = 0$, but is otherwise
  smooth. Here, there is also a peak near $\xi=0$, but the
  oscillatory behavior at large $\xi$ renders this approach
  insufficient.
\end{remark}

\subsection{The complex-frequency representation} \label{sec:cfrep}

In order to cope with the oscillatory behavior of the spectral Green's function,
we will extend the variable $\xi$ to the complex space 
$\CC^d$ and define a suitable analytic extension of 
$\uhat(\xi,t)$ which will permit a contour deformation of the Fourier representation
\eqref{eq:uiftrep}. The contour will be chosen so
that the oscillations, which increase in frequency over time, are damped
to a specified precision,
yielding the same accuracy with a significantly coarser quadrature rule.

We first define the contour $\Gamma$, shown in Figure \ref{fig:zigzag}, by the parameterization $\gamma: \RR
\to \CC$,
\begin{equation}\label{eq:gammadef}
  \gamma(\tau) = 
  \begin{cases}
    \gamma_1(\tau) = \tau + iH, & -\infty < \tau < -H \\
    \gamma_2(\tau) = \tau - i\tau, & -H \le \tau \le H \\
    \gamma_3(\tau) = \tau - iH, & H < \tau < \infty.
  \end{cases}
\end{equation}
Here $H > 0$ is a parameter, the selection of which will be discussed later. We write $\Gamma = \Gamma_1 \cup \Gamma_2
\cup \Gamma_3$, where $\Gamma_i$ is the portion of the curve given by
the parameterization $\gamma_i$. 

\begin{figure}[t]
  \centering
    \includegraphics[width=0.8\linewidth]{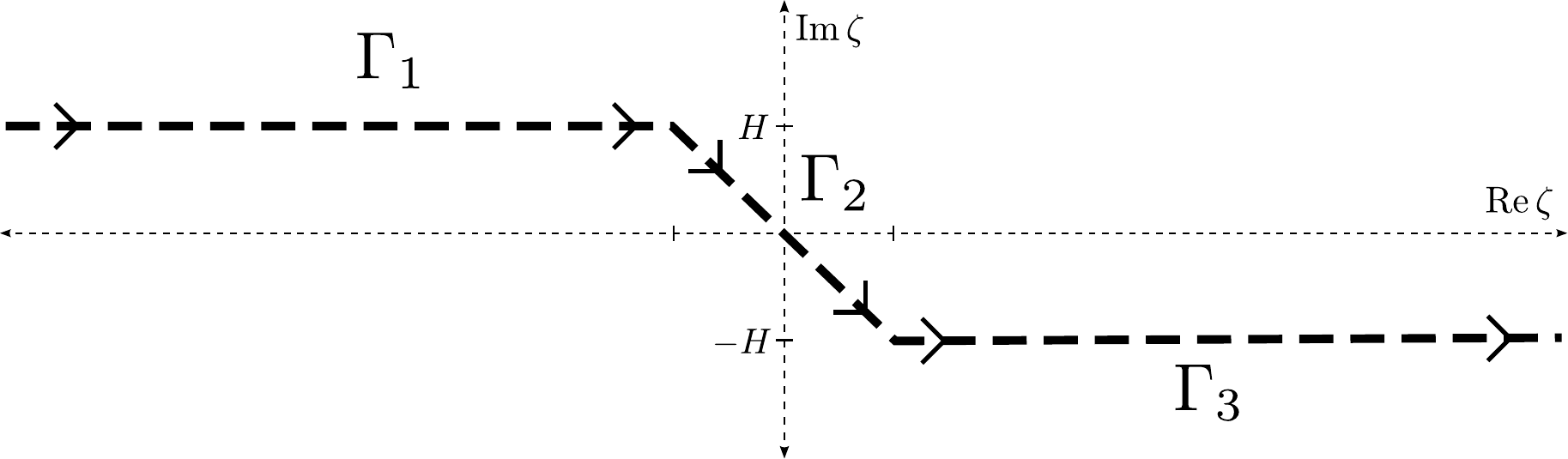}
    \caption{The contour $\Gamma = \Gamma_1 \cup \Gamma_2 \cup \Gamma_3$ is comprised of two horizontal
      segments with imaginary parts $H$ and $-H$, respectively, and a
      diagonal segment connecting them. It is given explicitly by the
      parameterization \eqref{eq:gammadef}.}
    \label{fig:zigzag}
\end{figure}

Since $u_0(x)$ and $(Vu)(x,t)$ for fixed $t \in [0,T]$ are smooth and
compactly supported in $x$, their
Fourier transforms define entire functions $\uzhat(\zeta)$ and
$\Vuhat(\zeta,t)$, respectively, with $\zeta \in \CC^d$ \cite[Thm. 7.2.2]{strichartz94}.
The following lemma asserts
that $\uhat(\zeta,t)$ is also an entire function on
$\CC^d$, and introduces the {\em complex Fourier representations}
of $u(x,t)$ and $(Vu)(x,t)$.
\begin{lemma}\label{lem:uentire}
  Let $u$ satisfy \eqref{eq:schrodfree} and the
  assumptions made above on $u_0$, $u$,
  $V$, and $A$ for the free space problem. Then for each $t \in [0,T]$, $\uhat(\xi,t)$
  may be extended as a function of $\xi$ to an entire function on
  $\CC^d$ by the formula
  \begin{equation} \label{eq:uhatzinteq}
    \uhat(\zeta,t) = e^{-i \zeta \cdot \zeta t + i \zeta \cdot \varphi(t)}
      \wh{u}_0(\zeta) - i \int_0^t e^{-i \zeta \cdot \zeta
    (t-s) + i \zeta \cdot (\varphi(t) - \varphi(s))} \wh{(Vu)}(\zeta,s)
    \, ds.
  \end{equation}
  For $\Gamma$ defined as in \eqref{eq:gammadef}, $u(x,t)$ and
  $(Vu)(x,t)$ may be recovered from their Fourier transforms
  on $\CC^d$ by the deformed inverse Fourier transforms
  \begin{equation}\label{eq:uciftrep}
    u(x,t) = \frac{1}{(2 \pi)^d}
  \int_{\Gamma^d} e^{i \zeta \cdot x} \uhat(\zeta,t) \, d \zeta
  \end{equation}
  and
  \begin{equation}\label{eq:Vuciftrep}
    (Vu)(x,t) = \frac{1}{(2 \pi)^d}
  \int_{\Gamma^d} e^{i \zeta \cdot x} \Vuhat(\zeta,t) \, d \zeta,
  \end{equation}
  respectively. Here, $\Gamma^d$ is the Cartesian
  product of $d$ copies of $\Gamma$, which is a $d$-dimensional surface in $\CC^d$.
\end{lemma}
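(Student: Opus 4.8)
The plan is to prove the three assertions of Lemma~\ref{lem:uentire} in order: (i) the formula \eqref{eq:uhatzinteq} defines an entire function of $\zeta$; (ii) this entire function agrees with the analytic continuation of $\uhat(\xi,t)$ from real $\xi$; and (iii) the contour $\Gamma$ may be substituted for $\RR$ in the inversion formula. The starting point is the cited fact \cite[Thm.~7.2.2]{strichartz94} (a Paley--Wiener statement): since $u_0$ and $(Vu)(\cdot,s)$ are $C^\infty$ and supported in $[-1,1]^d$, their Fourier transforms $\uzhat(\zeta)$ and $\Vuhat(\zeta,s)$ extend to entire functions on $\CC^d$ obeying bounds of the form $|\uzhat(\zeta)| \le C_N (1+\|\zeta\|)^{-N} e^{\|\Im \zeta\|_1}$ for every $N$, with the analogous bound for $\Vuhat(\zeta,s)$ uniform in $s \in [0,T]$ (uniformity follows because $u \in$ Schwartz for each $t$ and the support constraint makes the relevant seminorms continuous in $t$ on the compact interval $[0,T]$).

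For step (i), I would observe that the integrand in \eqref{eq:uhatzinteq} is, for each fixed $s$, an entire function of $\zeta$ (a product of the entire Gaussian-type factor $e^{-i\zeta\cdot\zeta(t-s)}$, the entire exponential $e^{i\zeta\cdot(\varphi(t)-\varphi(s))}$, and the entire function $\Vuhat(\zeta,s)$), and that the $s$-integral is over the compact set $[0,t]$. The superalgebraic decay bound above, combined with the fact that $|e^{-i\zeta\cdot\zeta(t-s)}| = e^{(t-s)\cdot 2\Re\zeta\cdot\Im\zeta}$ (real part of $-i\zeta\cdot\zeta$) is locally bounded in $\zeta$, shows the integrand is dominated, locally uniformly in $\zeta$ and uniformly in $s$, by an integrable function. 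Hence Morera's theorem (or differentiation under the integral sign) applied slicewise in each coordinate of $\zeta$, together with Hartogs' theorem, gives that $\uhat(\zeta,t)$ is entire on $\CC^d$. Step (ii) is then immediate: \eqref{eq:uhatzinteq} literally reduces to \eqref{eq:uhatAinteq} when $\zeta=\xi\in\RR^d$, and two entire functions agreeing on $\RR^d$ coincide, so \eqref{eq:uhatzinteq} is the unique analytic continuation.

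Step (iii), the contour deformation, is the main obstacle and deserves the most care. I would argue one Cartesian factor at a time: fix all but one coordinate and show $\int_\RR = \int_\Gamma$ for the remaining one-dimensional integral $\int e^{i\zeta_j x_j}\uhat(\zeta,t)\,d\zeta_j$, then iterate. For a single coordinate, deform $\RR$ to $\Gamma$ through the region between them; $\Gamma$ lies in the strip $|\Im\zeta_j|\le H$, and the integrand is holomorphic there, so by Cauchy's theorem it suffices to show the contributions from the two connecting vertical segments at $\Re\zeta_j = \pm R$ vanish as $R\to\infty$. This is where the estimates must be checked carefully: on such a segment $\Re\zeta_j$ is large while $\Im\zeta_j$ is bounded, so the Paley--Wiener decay $(1+|\zeta_j|)^{-N}$ beats the bounded factor $e^{|x_j|\,|\Im\zeta_j|}\,e^{|\Im\zeta_j|}$; the only subtlety is the Gaussian factor inside the history integral, $e^{-i\zeta\cdot\zeta(t-s)}$, whose modulus involves $\Re\zeta_j\,\Im\zeta_j$ and hence could grow linearly in $\Re\zeta_j$ — but this is still dwarfed by the superalgebraic decay for $N$ large, uniformly in $s\in[0,t]$, and one can also absorb it by noting the relevant $\Im\zeta_j$ on the vertical segments is exactly $\mp H$ or interpolating values, all bounded by $H$. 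Thus the vertical pieces vanish, Cauchy's theorem applies, and \eqref{eq:uciftrep} follows; the identical argument with $\uhat$ replaced by $\Vuhat$ (which is even better behaved, being non-oscillatory) gives \eqref{eq:Vuciftrep}. I would remark that one must also verify the deformed integrals converge absolutely, which again follows from the same decay bound since $\Gamma$ stays in a bounded strip.
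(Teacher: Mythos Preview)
Your strategy matches the paper's: Morera's theorem for the analyticity of \eqref{eq:uhatzinteq}, then Cauchy's theorem one coordinate at a time for the contour shift, with the work going into showing that the vertical connecting segments at $\Re\zeta_j=\pm R$ contribute nothing as $R\to\infty$. The paper handles that last step differently: rather than invoking Paley--Wiener decay, it substitutes the definition of $\Vuhat$ (respectively, of $\uhat$ via \eqref{eq:uhatAinteq}), swaps the $\eta$- and $y$-integrals by Fubini, and recognizes the result as the Fourier transform of an $L^1$ function evaluated at $R$, so that the Riemann--Lebesgue lemma gives the vanishing. Your direct Paley--Wiener route is cleaner and even gives a rate, so it is a perfectly good alternative.

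There is, however, one passage you must repair. You say $|e^{-i\zeta\cdot\zeta(t-s)}|$ ``could grow linearly in $\Re\zeta_j$'' yet would be ``dwarfed by the superalgebraic decay.'' That fallback is false: if the exponent $2(t-s)\Re\zeta_j\Im\zeta_j$ grew linearly in $R$, the modulus would grow \emph{exponentially} in $R$, and no decay of the form $(1+R)^{-N}$ could absorb it. The correct observation---used implicitly in the paper's bound $|e^{\eta(x-y)}\wh{G}(K-i\eta,t,0)|\le e^{H(|x|+1+\phimax)}$---is a sign check: on the right vertical segment $\Re\zeta_j=R>0$ while $\Im\zeta_j\in[-H,0]$, and on the left $\Re\zeta_j=-R<0$ while $\Im\zeta_j\in[0,H]$, so in both cases $\Re\zeta_j\,\Im\zeta_j\le 0$ and hence $|e^{-i\zeta_j^2(t-s)}|\le 1$. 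The already-deformed coordinates lie on $\Gamma$ or on $\RR$, where the same inequality holds by design, so the full spectral Green's function factor is bounded by $1$ on the vertical segments. With this in place, your Paley--Wiener bounds on $\uzhat$ and $\Vuhat$ finish the argument cleanly; replace the hedging by this sign observation.
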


A detailed proof for $d = 1$ is given in Appendix
\ref{sec:pflemuentire}, and for $d > 1$ the argument may be applied to
each dimension in turn. The analyticity of $\uhat(\zeta,t)$ defined by
\eqref{eq:uhatzinteq} follows from Morera's theorem, and the contour
deformations may be justified by Cauchy's theorem and an argument
involving the Riemann--Lebesgue lemma.

We give a brief explanation of the choice of $\Gamma$ here, with detailed
justification postponed until
Section \ref{sec:analysis}.  As before, we take $d = 1$ and $A = 0$, which will be sufficient to
illustrate the main ideas. We will show that
the complex Fourier representation \eqref{eq:uciftrep} can be
discretized with far fewer quadrature points than the real representation
\eqref{eq:uiftrep}. We assume here that $x \in [-1,1]$ in the
representation \eqref{eq:uciftrep}; indeed, as in Section
\ref{sec:realmarch}, our marching scheme will only require us to
evaluate $u(x,t)$ in this interval (see also Remark \ref{rem:fareval}). As before, we assume that $u(x,t)$ can be
resolved on $[-1,1]$ by a grid of $M = \OO{K_0}$ points, 
and show that the complex Fourier representation may be
discretized by a comparable number of points,
rather than the $\OO{M^2 T}$ points required for the real Fourier
representation. 
This leads directly to an efficient complex-frequency marching scheme.

Note first that 
$\uhat$ decays rapidly along $\Gamma$, as it does on the real line,
so that we can truncate the
complex Fourier representation \eqref{eq:uciftrep} at $\abs{\Re(\zeta)}
= K$ for some $K > 0$; that is, by analogy with
\eqref{eq:ifturealtrunc}, we have
\[u(x,t) \approx \frac{1}{2 \pi}
\int_{\Gamma_K} e^{i \zeta \cdot x} \uhat(\zeta,t) \, d \zeta,\]
where $\Gamma_K$ is the truncation of \eqref{eq:gammadef} to $\tau \in
[-K,K]$.
In Section \ref{sec:gammatrunc}, we show that we can take $K = K_0 + L$,
for a constant $L$,
so that $M = \OO{K}$.  The extension $L$ depends only on the 
desired precision and the parameter $H$, and not on $K_0$.

Since we have assumed $x \in [-1,1]$, the cost of discretizing this integral depends now on the 
behavior of $\uhat$ on $\Gamma_K$, which is described by
\eqref{eq:uhatzinteq}. For $A = 0$, \eqref{eq:uhatzinteq} becomes
\[\uhat(\zeta,t) = e^{-i \zeta^2 t} \wh{u}_0(\zeta) - i \int_0^t e^{-i \zeta^2
(t-s)} \wh{(Vu)}(\zeta,s) \, ds.\]
As before, $\uzhat$ and $\Vuhat$ are well-behaved, and in Figure
\ref{fig:specgfun} we give plots of the spectral Green's
function $\wh{G}(\zeta,t) = e^{-i \zeta^2 t}$ along $\Gamma$ and in the
complex plane, for several values of $t$.
While
$\wh{G}(\zeta,t)$ still oscillates along the horizontal
contours $\Gamma_1$ and $\Gamma_3$ at a
frequency which increases with $t$, it now decays exponentially at a rate
which also increases with $t$. As a result, $\wh{G}(\zeta,t)$, and
therefore $\uhat(\zeta,t)$, may be
resolved on $\Gamma_1 \cap \Gamma_K$ and $\Gamma_3 \cap \Gamma_K$ 
by a grid with $\OO{1}$ spacing with respect to $K$ for all $t
\in [0,T]$, or $\OO{K} = \OO{M}$ points in total, {\em for any fixed level of precision}.

On $\Gamma_2$, $\wh{G}(\zeta,t)$ takes the form of a Gaussian
of width $\frac{1}{2 \sqrt{t}}$, which motivates our choice of the angle
$-\pi/4$ for this segment.
To accurately
integrate all such Gaussians for $t \in [0,T]$ using a single quadrature rule, we
can cluster nodes exponentially towards the origin \cite{greengard00,qpsc}. This
requires a total of $\OO{\log T}$ quadrature nodes.

In short, we can discretize the complex Fourier representation
\eqref{eq:uciftrep} for all $t \in [0,T]$ using a quadrature rule with
$\OO{M + \log T}$ nodes on $\Gamma_K$.  In Section \ref{sec:gammares},
we will see that the same strategy may be used when $A \neq 0$, but more
grid points are required on $\Gamma_1$ and $\Gamma_3$; in this case,
given the proper choice of $H$, we will require $\OO{\phimax M + \log
T}$ nodes, where $\phimax$ is the quiver radius of $A$, defined by
\begin{equation} \label{eq:phimax}
  \phimax := \max_{t \in [0,T]} \abs{\varphi(t)}.
\end{equation}
We therefore write the estimate for the general case as $\OO{(1 + \phimax)
M + \log T}$, which reduces to the correct estimate for $A = 0$.

\begin{figure}[t]
  \centering

  \begin{subfigure}[t]{\textwidth}
    \includegraphics[width=\linewidth]{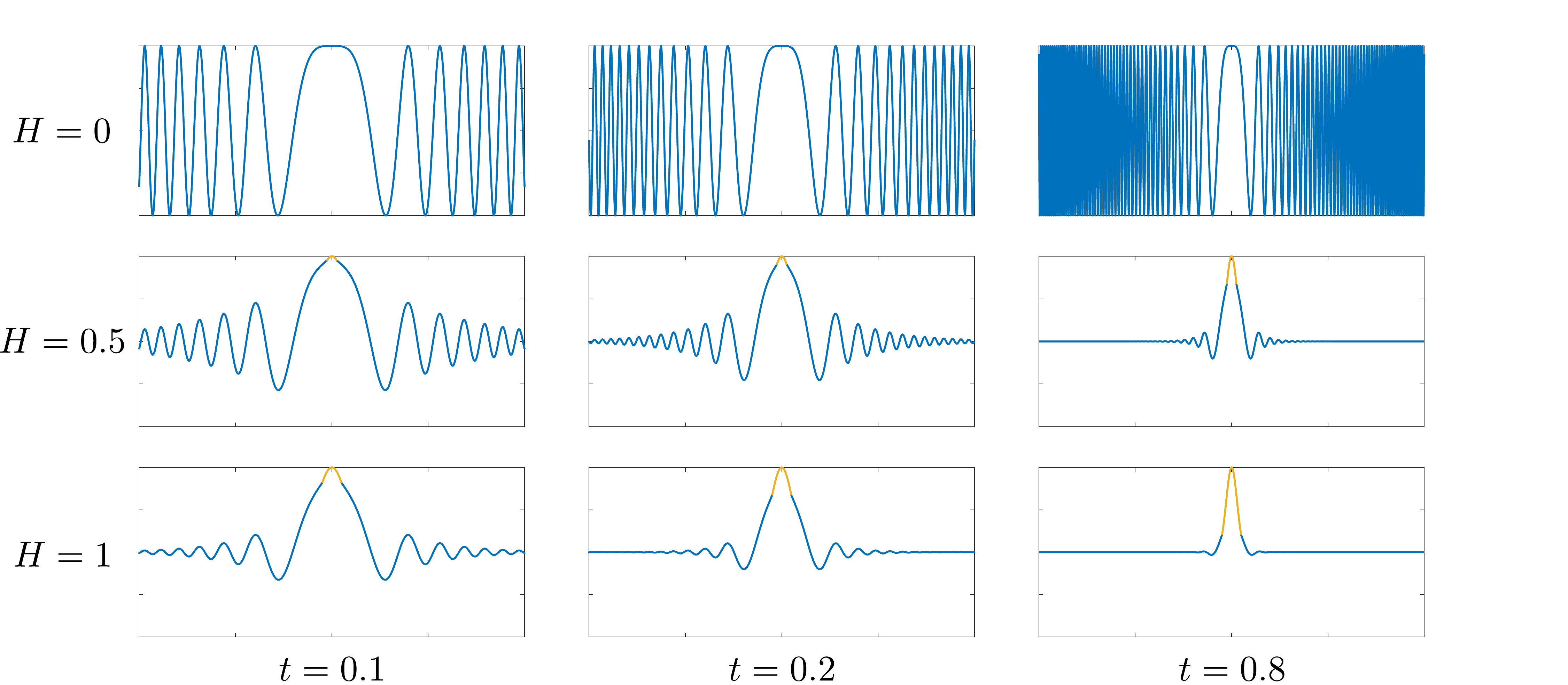}

    \caption{$\Re \wh{G}(\zeta,t)$ along $\Gamma$, $\zeta = \gamma(\tau)$. Yellow indicates the
    part of the graph of $\Re \wh{G}(\zeta,t)$ on $\Gamma_2$, and blue the part on 
    $\Gamma_1$ or $\Gamma_3$.}
    \label{fig:specgfungamma}
  \end{subfigure}
  \par\bigskip
  \begin{subfigure}[t]{\textwidth}
    \includegraphics[width=\linewidth]{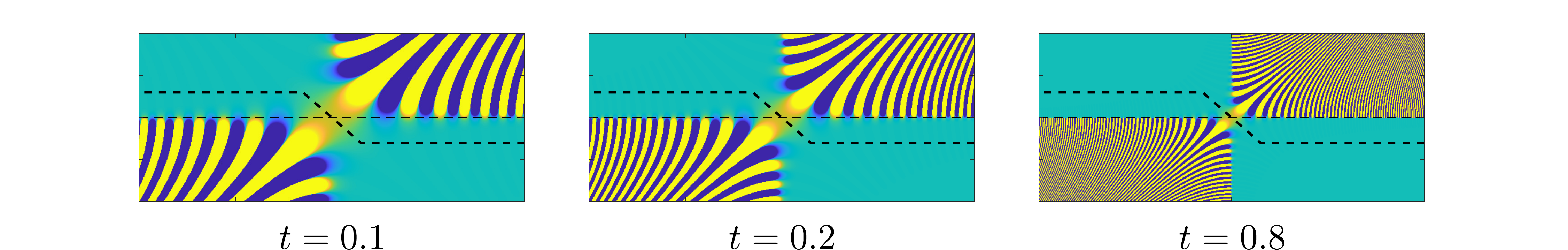}

    \caption{$\Re \wh{G}(\zeta,t)$ on $\CC$. The real line is indicated
    by the thin dashed line, and the contour $\Gamma$, for some choice
    of $H$, is
    indicated by the thick dashed line. Yellow corresponds to large
    positive values, blue to large negative values, and cyan to values
    near zero.}
    \label{fig:specgfuncontour}
  \end{subfigure}

  \caption{The real part of the spectral Green's function
  $\wh{G}(\zeta,t) = e^{-i \zeta^2 t}$, for several values of $t$,
  plotted (a) along a portion of the
  contour $\Gamma$, with several choices of $H$,
  and (b) and in the complex plane. Along the real line, which corresponds to
    $H = 0$, the spectral Green's function oscillates more and more
    rapidly with increasing $t$, and does not decay. For $H>0$, the
    oscillations remain, but they are accompanied by damping which also
    increases with $t$. As a result, the grid
    spacing required to resolve all oscillations with magnitude above a
    given threshold value remains constant with $t$. The damping rate
    increases with $H$. $\wh{G}(\zeta,t)$ also becomes narrower near the origin for larger $t$,
    requiring a logarithmic clustering of quadrature nodes for large $T$.}
  \label{fig:specgfun}
\end{figure}

\begin{remark} \label{rem:fareval}

Although we have assumed above that $x \in
  [-1,1]$, the complex-frequency
  representation \eqref{eq:uciftrep} may be evaluated at any
  $x \in \RR$ once $\uhat(\zeta,t)$ has been resolved on $\Gamma_K$.
  This may be done by interpolating $\uhat(\zeta,t)$ to a quadrature grid
  sufficiently fine to resolve $e^{i \zeta x}$ on $\Gamma_K$, or
  similarly by expanding $\uhat(\zeta,t)$ in a basis and precomputing
  the corresponding moments.

\end{remark}

\begin{remark} \label{rem:Hchoice}
It is evident from Figure \ref{fig:specgfun}
that the choice of $H$ is critical. Too small a value leads to insufficient
damping of high frequency oscillations.
  On the other hand, $e^{i \zeta x}$ and $\uhat(\zeta,t)$ grow
  exponentially in the imaginary direction, so too large a value places the path
of integration of the deformed inverse Fourier transform in a region
of large amplitude oscillations, leading to a loss of
accuracy in finite precision arithmetic from catastrophic cancellation. 
We will show in Section
\ref{sec:gammares} that the correct balance is achieved by taking $H = \frac{\log\paren{\varepsilon/\paren{\paren{1+\norm{V}_{2,\infty}}
  \epsm}}}{2d(1 + \phimax)}$, where $\varepsilon$
is the desired precision, $\epsm$ is the machine epsilon, and $\norm{V}_{2,\infty} = \max_{t \in
[0,T]} \norm{V(\cdot,t)}_2$.
\end{remark}

\subsection{The complex-frequency marching scheme}

We can now derive a complex-frequency marching scheme following exactly the
same procedure as for the real-frequency marching scheme in Section
\ref{sec:realmarch}. The formulas
\eqref{eq:uiftrep}-\eqref{eq:utrapxi} remain true, with integration over
$\RR^d$ replaced by integration over $\Gamma^d$, the real variable $\xi
\in \RR^d$ replaced by a complex variable $\zeta \in \Gamma^d$, and the
norm $\| \xi \|^2$ replaced by the sum of squares $\zeta \cdot \zeta = \zeta_1^2 + \cdots +
\zeta_d^2$. For completeness, we write out the fully discrete marching
scheme for the one-dimensional case; the higher-dimensional case is
analogous.

We introduce a set of
equispaced grid points on $[-1,1]$, $x_j = -1 + 2(j-1)/M$ with
$j=1,\ldots,M$, and assume for the moment that
there is a set of spectrally accurate quadrature nodes $\zeta_1,\ldots,\zeta_N \in
\Gamma$ and weights $w_1,\ldots,w_N$ so that
\begin{equation}\label{eq:uciftdisc}
  u(x,t) = \frac{1}{2 \pi} \int_\Gamma e^{i \zeta x} \uhat(\zeta,t) \, dt \approx
  \frac{1}{2 \pi} \sum_{k=1}^N e^{i \zeta_k x} \uhat(\zeta_k,t) w_k
\end{equation}
and
\begin{equation}\label{eq:Vuciftdisc}
  (Vu)(x,t) = \frac{1}{2 \pi} \int_\Gamma e^{i \zeta x} \Vuhat(\zeta,t) \, dt \approx
  \frac{1}{2 \pi} \sum_{k=1}^N e^{i \zeta_k x} \Vuhat(\zeta_k,t) w_k
\end{equation}
hold to high accuracy
for all $t \in [0,T]$. The specific form of this
rule will be discussed in Section \ref{sec:gammaquad}. As noted in the previous
section, it will have $N = \OO{(1+\phimax) M + \log T}$ nodes.
A complex-frequency DFT is given by the
equispaced trapezoidal rule, which is spectrally accurate for smooth,
compactly-supported functions:
\begin{equation} \label{eq:fzdft}
  \wh{f}(\zeta_k) = \int_{-1}^1 e^{-i \zeta_k x} f(x) \, dx
\approx \frac{2}{M} \sum_{j=1}^M e^{-i \zeta_k x_j} f(x_j).
\end{equation}
The fully-discretized, complex-frequency analogues of
\eqref{eq:uhatxi} and \eqref{eq:utrapxi} are, respectively,
\begin{equation} \label{eq:uhatzfd}
  \uhat(\zeta_k,t) \;\approx\;
  e^{-i
    \zeta_k^2 \Delta t + i \zeta_k \paren{\varphi(t) - \varphi(t-\Delta t)}} \paren{
  \uhat(\zeta_k,t-\Delta t)  - i  \frac{\Delta t}{2} \Vuhat(\zeta_k,t - \Delta
t)} - i \frac{\Delta t}{2} \Vuhat(\zeta_k,t)
\end{equation}
for each $k=1,\ldots,N$, and
\begin{equation} \label{eq:utrapzfd}
  u(x_j,t) \;\approx\; \frac{1}{1 + i \frac{\Delta t}{2} V(x_j,t)}
  \sum_{k=1}^N e^{i \zeta_k x_j} e^{-i \zeta_k^2
  \Delta t + i \zeta_k \paren{\varphi(t) - \varphi(t-\Delta t)}} \paren{
    \uhat(\zeta_k,t-\Delta t)  - i  \frac{\Delta t}{2} \Vuhat(\zeta_k,t - \Delta t)}
\end{equation}
for each $j=1,\ldots,M$.
They lead to the following
fully discrete second-order marching scheme:
\begin{enumerate}
  \item Given $\uhat(\zeta_k,t-\Delta t)$ and $\Vuhat(\zeta_k,t -
\Delta t)$ for $k=1,\ldots,N$, compute $u(x_j,t)$ for $j =
1,\ldots,M$ using \eqref{eq:utrapzfd}.
  \item Compute $\Vuhat(\zeta_k,t)$ by multiplication with $V(x_j,t)$ and the
complex-frequency DFT \eqref{eq:fzdft}.
  \item Compute $\uhat(\zeta_k,t)$ for $k = 1,\ldots,N$ using
    \eqref{eq:uhatzfd}. Update $t \leftarrow t + \Delta t$ and repeat
    from the first step.
\end{enumerate}
Since $u_0$ is supported on $[-1,1]$, the scheme is initialized by
directly computing $\uhat(\zeta_k,0)$ and $\Vuhat(\zeta_k,0)$
using the complex-frequency DFT \eqref{eq:fzdft}.
We note as before that the Fourier coefficients are updated without a direct
Fourier transform of $u$, which would require evaluating $u(x,t)$
outside of $[-1,1]$. 
The cost of this marching scheme is
dominated by that of computing one forward and one inverse
complex-frequency DFT per time step.

\begin{remark} \label{rem:highorderfs}

  Alternative time discretizations may be obtained as in
  Section \ref{sec:highorder}, by replacing the trapezoidal rule for the
  local update integral by some other approximation. In
  particular, we can obtain an $n$th-order implicit Adams scheme by
  copying over the formulas for the periodic case almost exactly,
  exchanging the periodic Fourier
  transforms for their free space, complex-frequency
  analogues. Thus, for the fully-discretized scheme,
  \eqref{eq:uhatzfd} and \eqref{eq:utrapzfd} are replaced by
\begin{multline*}
  \uhat(\zeta_k,t) \approx e^{-i \zeta_k^2
\Delta t + i \zeta_k \paren{\varphi(t) - \varphi(t-\Delta t)}}
  \uhat(\zeta_k,t-\Delta t)
  - i \Delta t \sum_{l=0}^{n-1} \mu_l e^{-i \zeta_k^2 l \Delta t + i \zeta_k
  \paren{\varphi(t) - \varphi(t-l\Delta t)}}  \Vuhat(\zeta_k,t - l \Delta
  t)
\end{multline*}
and
  \begin{multline*}
u(x_j,t) \approx \frac{1}{1 + i \mu_0 \Delta t V(x_j,t)}
  \sum_{k=1}^N e^{i \zeta_k x_j} \Bigg[ e^{-i \zeta_k^2
\Delta t + i \zeta_k \paren{\varphi(t) - \varphi(t-\Delta t)}}
  \uhat(\zeta_k,t-\Delta t)  \\
   - i \Delta t \sum_{l=1}^{n-1} \mu_l e^{-i \zeta_k^2 l \Delta t + i
   \zeta_k \paren{\varphi(t) - \varphi(t-l\Delta t)}}  \Vuhat(\zeta_k,t - l \Delta
  t)  \Bigg],
\end{multline*}
respectively. As discussed in Section \ref{sec:highorder},
the multistep method requires initialization, which can again be 
accomplished using iterated Richardson
  extrapolation. Note that here, we must perform
  Richardson extrapolation both on $u(x,t)$ and on $\uhat(\xi,t)$. 
\end{remark}

It remains to describe the quadrature used in \eqref{eq:uciftdisc} and
\eqref{eq:Vuciftdisc}, and to show that the non-standard DFTs arising in
the fully discrete marching scheme may be implemented by a fast,
FFT-based algorithm. These issues are discussed in Sections
\ref{sec:gammaquad} and \ref{sec:fftgamma}, respectively. The result
will be a free space marching scheme which does not require the use of
artificial boundary conditions, and shares the benefits of the periodic
scheme: it is spectrally accurate in space, admits inexpensive
high-order implicit time discretization, and has a near-optimal computational
cost and memory requirements.

\subsection{Quadrature rule on $\Gamma$} \label{sec:gammaquad}

Guided by the discussion in Section \ref{sec:cfrep}, we now describe a
spectrally accurate quadrature rule to use in 
\eqref{eq:uciftdisc} and \eqref{eq:Vuciftdisc}. We assume the integrals
have been truncated to $\Gamma_K$, with $K$ chosen based on the
decay of $\uhat$ and $\Vuhat$.  Here and throughout the rest of the
article, we will abuse notation and use the notation $\Gamma_1,
\Gamma_3$ for both the infinite rays and their truncated analogues; the
usage should be clear from the context.

We first require a quadrature for
a smooth function on the segments $\Gamma_1$ and $\Gamma_3$; that is, on $\gamma(\tau)$ with $\tau \in
[-K,-H]$ and $\tau \in [H,K]$. A simple and accurate 
choice would be Gauss-Legendre quadrature.
As will become clear in Section
\ref{sec:fftgamma}, this would lead to a fast algorithm, but one that requires
nonuniform FFTs \cite{nufft2,greengard04,barnett19}, which are slower
than ordinary FFTs. 
Instead, we will use 
Alpert's high-order hybrid Gauss-trapezoidal rule.
This rule modifies the equispaced trapezoidal rule to achieve convergence
of order $2p$ by adding $p$ auxilliary nodes, with carefully chosen
weights, near each endpoint.
On $\Gamma_2$, we will use a different rule that clusters points exponentially
near the origin. The resulting composite rule is accurate and robust,
and is compatible with a fast algorithm based on the ordinary FFT.

For any $p \in \ZZ^+$ and $n \in \ZZ^+$, Alpert's hybrid Gauss-trapezoidal 
rule for a smooth integrand $f$ on $[a,b]$ is given by
\[\int_a^b f(x) \, dx = h \sum_{k=1}^p w_k^{\text{alp}} f\paren{a + x_k^{\text{alp}}
  h} + h \sum_{k=0}^{n-1}
  f\paren{a + \kappa h + kh} + h \sum_{k=1}^p w_k^{\text{alp}} f\paren{b -
x_k^{\text{alp}} h},\]
  where $\kappa$ is the number of omitted regular nodes
  (a constant independent of $n$ determined by $p$),
  $h = (b-a)/(n+2\kappa-1)$ is the
trapezoidal rule grid spacing chosen so that $a + \kappa h + (n-1)h = b
- \kappa h$, and $x_1^{\text{alp}},\ldots,x_p^{\text{alp}}$,
$w_1^{\text{alp}},\ldots,w_p^{\text{alp}}$ are the nodes and weights providing endpoint corrections to the
trapezoidal rule. Values for $\kappa$, $x_k^{\text{alp}}$, and
$w_k^{\text{alp}}$ may be found in standard tables for several
choices of $p$ \cite{alpert99}. In our case, since the integrand already
decays at one of the endpoints, we only require corrections at the other.
For a fixed $p$ and some number $\NE$ of equispaced nodes, we obtain the following
quadrature of order $2p$ for a function $f$ on $\Gamma_3$:
\[
\int_{\Gamma_3} f(\zeta) \, d \zeta = \int_K^H f(\tau - iH) \, d \tau
\;\approx\;
\sum_{k=1}^p f\paren{\zeta_k^{(\calA_3)}} w_k^{(\calA_3)} + \sum_{k=1}^{\NE}
f\paren{\zeta_k^{(\calE_3)}} w_k^{(\calE_3)},
\]
where
\begin{align*}
  \zeta_k^{(\calA_3)} &= H + x_k^{\text{alp}} h - iH, & w_k^{(\calA_3)} &= h w_k^{\text{alp}}, \\
  \zeta_k^{(\calE_3)} &= H + \kappa h + kh - iH, & w_k^{(\calE_3)} &= h,
\end{align*}
and $h$ is defined as before with $a = H$, $b = K$.
Note that for simplicity we have assumed $\abs{f(\tau-iH)}$ has decayed below our required
accuracy by $\tau = K - \kappa h$ rather than $\tau = K$, and simply deleted the right
endpoint correction. 
The quadrature on $\Gamma_1$ may be defined
by symmetry:
\[\int_{\Gamma_1} f(\zeta) \, d \zeta = \int_{-K}^{-H} f(\tau + iH) \, d \tau
  \approx \sum_{k=1}^{\NE}
f\paren{\zeta_k^{(\calE_1)}} w_k^{(\calE_1)} + \sum_{k=1}^p f\paren{\zeta_k^{(\calA_1)}} w_k^{(\calA_1)},\]
with
\begin{align*}
  \zeta_k^{(\calA_1)} &= -\zeta_{p-k+1}^{(\calA_3)}, & w_k^{(\calA_1)} &= w_{p-k+1}^{(\calA_3)}, \\
  \zeta_k^{(\calE_1)} &= -\zeta_{\NE-k+1}^{(\calE_3)}, & w_k^{(\calE_1)} &= w_{\NE-k+1}^{(\calE_3)}.
\end{align*}

On $\Gamma_2$, or equivalently on $\gamma(\tau)$ with $\tau \in [-H,H]$,
we require a quadrature for a smooth function with nodes exponentially
clustered at the origin. Following \cite{greengard00}, we use a
dyadically-refined composite Gaussian
quadrature rule, defined as follows. Let $x_1^{\text{gau}},\ldots,x_q^{\text{gau}}$ and
$w_1^{\text{gau}},\ldots,w_q^{\text{gau}}$ be the standard Gaussian quadrature nodes
and weights, respectively, on $[-1,1]$, which define a rule of order $2q+1$.
Given a {\em refinement depth} $\nr \in \ZZ^+$, define a set of
panels for $\tau \in [0,H]$ denoted by $[a_0,a_1]$, $[a_1,a_2]$,$\ldots$,
$[a_{\nr-1},a_{\nr}]$, which are
dyadically refined towards the origin as follows:
\[a_k =
  \begin{cases}
    0 & k = 0 \\
    H/2^{\nr-k} & 1 \leq k \leq \nr.
  \end{cases}
\]
Then, supplement this with the reflected panels for $\tau \in [-H,0]$,
namely $[a_{-\nr},a_{-\nr+1}]$,
$[a_{-\nr+1},a_{-\nr+2}]$, $\ldots$, $[a_{-1},a_0]$, defined by
\[a_{-k} = -a_k.\]
On each such panel, we use a Gaussian quadrature rule, rescaled to the panel:
\[\int_{\Gamma_2} f(\zeta) \, d \zeta = (1-i) \int_{-H}^H f(\tau - i \tau)
  \, d \tau \approx \sum_{k=-\nr+1}^{\nr} \sum_{j=1}^q
f\paren{\zeta^{(\calC)}_{j,k}} w^{(\calC)}_{j,k}\]
where
\[\zeta^{(\calC)}_{j,k} = \frac{a_k - a_{k-1}}{2} x_j^{\text{gau}} + \frac{a_{k-1}
  + a_k}{2}, \qquad \qquad w^{(\calC)}_{j,k} = (1-i) \frac{a_k - a_{k-1}}{2}
w_j^{\text{gau}}.\]
For simplicity of notation, we re-index the double sum to a sum over
a single index,
\[\int_{\Gamma_2} f(\zeta) \, d \zeta = (1-i) \int_{-H}^H f(\tau - i \tau)
  \, d \tau \approx \sum_{k=1}^{\NC}
f\paren{\zeta^{(\calC)}_k} w^{(\calC)}_k,\]
where $\NC = 2 \nr q$ and $\zeta^{(\calC)}_k$, $w^{(\calC)}_k$ have been suitably defined
in terms of $\zeta^{(\calC)}_{j,k}$, $w^{(\calC)}_{j,k}$, respectively.
The notation $\NC$ is used to reflect the fact that this is a {\em clustered}
set of nodes.

We can now define the full set of quadrature nodes
$\zeta_1,\ldots,\zeta_N$ and weights $w_1,\ldots,w_N$ on $\Gamma_K$ by
combining the five quadrature rules described above: the equispaced
rules of $\NE$ nodes each on $\Gamma_1$ and $\Gamma_3$, the $p$ nodes
corresponding to Alpert's endpoint corrections on $\Gamma_1$ and
$\Gamma_3$, and the exponentially-clustered composite Gaussian rule of
$\NC$ nodes on $\Gamma_2$. In total, we have $N = 2 \NE + 2p + \NC$ nodes,
with $\NC = 2 \nr q$. From the discussion in Section \ref{sec:cfrep},
$p$ is a fixed constant and $q = \OO{H}$, while $\NE = \OO{(1+\phimax) M}$ and
$\nr = \OO{\log T}$ depend on the frequency content of the solution and
the overall simulation time, respectively. The locations of the quadrature nodes are
illustrated in Figure \ref{fig:nodes}.

\begin{figure}[t]
  \centering
    \includegraphics[width=.9\linewidth]{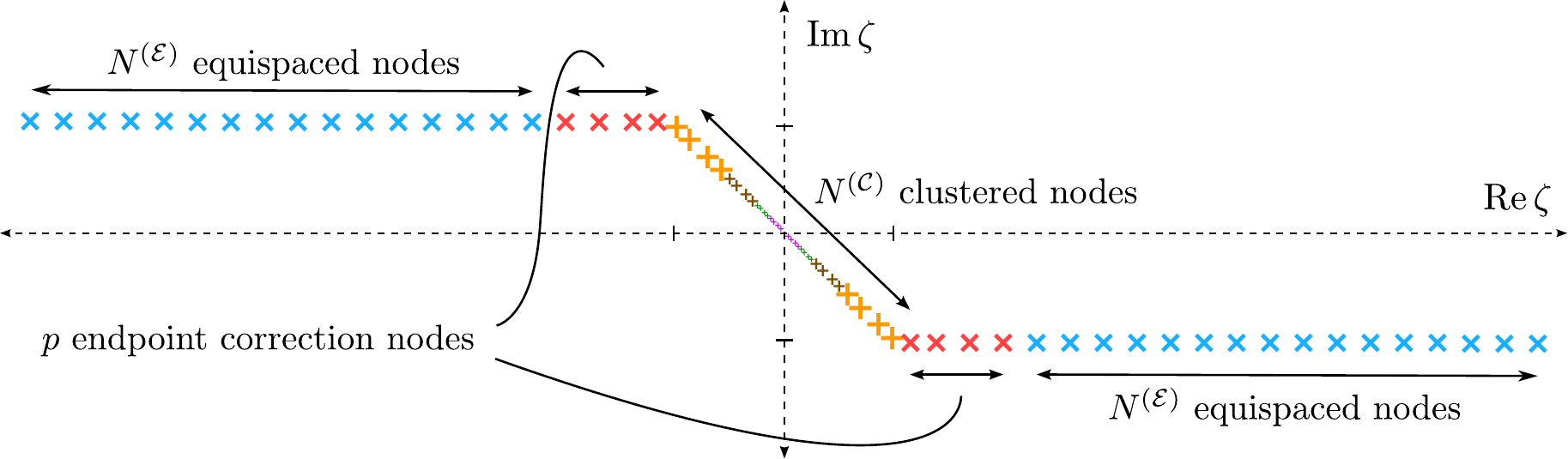}
    \caption{The quadrature nodes $\zeta_1,\ldots,\zeta_N$ for
    $p = 4$, $\NE = 16$, $q = 4$, and $\nr = 4$. There are
   $\NE$ equispaced nodes and $p$ endpoint corrections on each of 
   $\Gamma_1$ and  $\Gamma_3$. 
   On $\Gamma_2$, there are $\NC = 2 \nr q$ exponentially-clustered Gaussian
   nodes.}
    \label{fig:nodes}
\end{figure}

\section{Fast Fourier transforms on $\Gamma$} \label{sec:fftgamma}

We turn now to the fast computation of the complex-frequency
forward and inverse
DFTs, appearing for the one-dimensional case in \eqref{eq:fzdft} and
\eqref{eq:utrapzfd}, respectively. This will complete our description of the free space
method. Our algorithm uses a combination of rescaled, zero-padded FFTs,
Chebyshev interpolation, and
direct summation.

For compatability with the standard
FFT, it is convenient to place some restrictions on the grid spacing and
truncation in the frequency domain. The first is that we assume $K = H + \pi M/2$,
consistent with the principle that the grid spacing in the physical
domain is proportional to the truncation distance in the frequency
domain. The second is that $\NE > M/2$, which is also natural; if it
were not the case, the frequency domain grid would be too coarse to resolve the
highest-frequency planewaves in the complex Fourier representation.
These specific constraints will be derived below.

\subsection{The one-dimensional case} \label{sec:fft1d}

\begin{definition}
The {\em forward DFT} from $[-1,1]$ to $\Gamma$ is given by 
\begin{equation}\label{eq:dft1d}
  \wh{f}_k = \sum_{j=1}^M e^{-i \zeta_k x_j} f_j
\end{equation}
for $k = 1,\ldots,N$, where $x_j = -1 + \frac{2 (j-1)}{M}$ are
equispaced nodes on $[-1,1]$. Here $\zeta_1,\ldots,\zeta_M$ are the
quadrature nodes described in Section \ref{sec:gammaquad}.
  We note that the notation $\wh{f}_k$ no longer refers to the
  coefficients of integer Fourier modes, as in Section \ref{sec:periodic}.
\end{definition}

We can define
five subsets of the Fourier coefficients $\wh{f}_k$ corresponding to the
five subsets of the quadrature nodes. That is,
we associate $\wh{f}^{(\calE_1)}_k$ with
the quadrature node $\zeta^{(\calE_1)}_k$, $\wh{f}^{(\calE_3)}_k$ with the node
$\zeta^{(\calE_3)}_k$, $\wh{f}^{(\calA_1)}_k$ and $\wh{f}^{(\calA_3)}_k$ to the nodes
$\zeta^{(\calA_1)}_k$ and $\zeta^{(\calA_3)}_k$, respectively, and $\wh{f}^{(\calC)}_k$ to
the node $\zeta^{(\calC)}_k$. 
We separate the Fourier coefficients in this manner because the method of
computation is different for each subset. After transforming the five
subsets separately, the resulting coefficients can be concatenated into the
$N$-vector $(\wh{f}_1,\dots,\wh{f}_N)^T$.
There are three transform types: \emph{$\calA$-type}, \emph{$\calC$-type}, and
\emph{$\calE$-type}.

\begin{definition} \label{def:transformtypes}
The coefficients corresponding to Alpert's end-point correction nodes are
given by {\em $\calA$-type transforms}:
\begin{align*}
\wh{f}^{(\calA_1)}_k &= \sum_{j=1}^M e^{-i \zeta^{(\calA_1)}_k x_j} f_j, &
\wh{f}^{(\calA_3)}_k &= \sum_{j=1}^M e^{-i \zeta^{(\calA_3)}_k x_j} f_j,
\end{align*}
for $k = 1,\ldots,p$.
The coefficients corresponding to the
clustered composite Gauss nodes are given by a {\em $\calC$-type transform}:
\begin{equation} \label{eq:fhatCsum}
\wh{f}^{(\calC)}_k = \sum_{j=1}^M e^{-i \zeta^{(\calC)}_k x_j} f_j,
\end{equation}
for $k=1,\ldots,\NC$.
The coefficients corresponding to equispaced nodes are given by 
{\em $\calE$-type transforms}. Using the substitutions
\begin{equation}\label{eq:zetakr1}
  \zeta^{(\calE_1)}_k = \xi^{(\calE_1)}_k + iH,
\end{equation}
where $\xi^{(\calE_1)}_k$ are equispaced nodes on $[-K + \kappa h - h,-H - \kappa h]$,
and
\begin{equation} \label{eq:zetakr3}
  \zeta^{(\calE_3)}_k = \xi^{(\calE_3)}_k - iH,
\end{equation}
where $\xi^{(\calE_3)}_k$ are equispaced nodes on
$[H + \kappa h,K - \kappa h + h]$, these are given by
\begin{equation}\label{eq:ssfft1dr1}
  \wh{f}^{(\calE_1)}_k = \sum_{j=1}^M e^{-i \xi^{(\calE_1)}_k x_j} \paren{e^{H
  x_j} f_j}
\end{equation}
and
\begin{equation}\label{eq:ssfft1dr3}
  \wh{f}^{(\calE_3)}_k = \sum_{j=1}^M e^{-i \xi^{(\calE_3)}_k x_j} \paren{e^{-H
  x_j} f_j}
\end{equation}
for $k = 1,\ldots,\NE$.

\end{definition}

\subsubsection{Fast computation of one-dimensional forward transforms}

The $\calA$-type transforms may be computed by direct summation at a cost of $\OO{M}$,
since $p$ is a fixed constant.

The $\calC$-type transforms may also be computed by direct summation at a cost of $\OO{M
\NC}$. However, a simple interpolation scheme may be used to decrease
this cost if $\NC$ is large. Indeed, although our scheme requires us to sample the Fourier
transform at a clustered set of points $\zeta^{(\calC)}_k \in \Gamma_2$, the
restriction $x_j \in [-1,1]$ ensures that it is smooth in $\Gamma_2$, and in
particular well-resolved by a Chebyshev interpolant of order independent of $\NC$.
To see this, consider the function $e^{-i \zeta x_j}$ for $\zeta \in \Gamma_2$.
Substituting in 
the parameterization $\zeta = \gamma(\tau) = (1-i) \tau$ of $\Gamma_2$
gives
\[e^{-i \zeta x_j} = e^{-\tau x_j} e^{-i \tau x_j}\]
for $\tau \in [-H,H]$. A spectrally accurate approximation is given by
the Chebyshev interpolant 
\begin{equation} \label{eq:expintrpm}
e^{-\tau x_j} e^{- i \tau x_j} \approx \sum_{l=0}^{\NT-1} \lambda_{l,j}
T_l^H(\tau).
\end{equation}
Here $\NT-1$ is the degree of the interpolant, $T_l^H$ is the degree $l$ Chebyshev polynomial of the
first kind rescaled to $[-H,H]$, and $\lambda_{l,j} \in \CC$. Define $\tau^{(\calC)}_k \in [-H,H]$ so that $\gamma(\tau^{(\calC)}_k) =
\zeta^{(\calC)}_k$ for $k = 1,\ldots,\NC$. Then plugging the interpolant
into \eqref{eq:fhatCsum}, evaluating at the points $\tau^{(\calC)}_k$, and changing the order of summation gives
\[\wh{f}^{(\calC)}_k \approx \sum_{l=0}^{\NT-1} T_l^H(\tau^{(\calC)}_k) \sum_{j=1}^M
\lambda_{l,j} f_j.\]
This expression may be computed for
every $k = 1,\ldots,\NC$ directly in $\OO{M \NT + \NT \NC}$ operations.
Since $x_j \in [-1,1]$, we can estimate $\NT = \OO{H}$, and in
particular $\NT$ does not depend on $\NC$. In Section
\ref{sec:gammares} we show $H =
\OO{(1+\phimax)^{-1}}$, which may be estimated as $\OO{1}$ for
simplicity, so we can estimate $\NT = \OO{1}$. This scheme therefore reduces
the cost of computing the coefficients $\wh{f}^{(\calC)}_k$ from $\OO{M
\NC}$ to $\OO{M + \NC}$.

The $\calE$-type transforms may be thought of as {\em
shifted and scaled} versions of the standard DFT, applied to rescaled
inputs. Indeed, the standard DFT, given by
\begin{equation} \label{eq:dftstandard}
  \wh{c}_k = \sum_{j=1}^n e^{-2 \pi i (j-1)(k-1)/n} c_j
\end{equation}
for $k=1,\ldots,n$, maps values at $n$
equispaced nodes on $[0,2 \pi)$ to the coefficients of $n$ equispaced
frequencies on $[0,n)$. On the other hand, \eqref{eq:ssfft1dr1} and \eqref{eq:ssfft1dr3} are of the general form
\begin{equation} \label{eq:ssfftgen}
  \wh{c}_k = \sum_{j=1}^m e^{-i \xi_k x_j} c_j,
\end{equation}
where $\xi_k = \alpha + \frac{(\beta-\alpha)
(k-1)}{n}$, for $k=1,\ldots,n$. This transform maps
values at $m$ equispaced nodes on $[-1,1)$ to the coefficients of $n$
equispaced frequencies on
$[\alpha,\beta)$. 
Let us describe how to compute these transforms efficiently.

We first expand and rewrite
\eqref{eq:ssfftgen} as
\[\wh{c}_k = e^{i (\alpha + (\beta-\alpha) (k-1)/n)} \sum_{j=1}^m
e^{-i 2(\beta-\alpha)(j-1)(k-1)/mn} \paren{e^{-i 2 \alpha (j-1)/m} c_j}.\]

Let $\nu \geq \max(m,n)$ be an integer, and extend $c_j$ to $j =
1,\ldots,\nu$ by setting $c_j = 0$ for $j > m$.
Then we can take the above sum over $\nu$ terms:
\begin{equation} \label{eq:ssdftp}
  \wh{c}_k = e^{i (\alpha + (\beta-\alpha) (k-1)/n)} \sum_{j=1}^\nu
e^{-i 2(\beta-\alpha)(j-1)(k-1)/mn} \paren{e^{-i 2 \alpha (j-1)/m} c_j}.
\end{equation}
If $\alpha$, $\beta$, and $\nu$ are such that $(\beta-\alpha)/mn = \pi/\nu$,
then the sums in \eqref{eq:ssdftp}, for $k=1,\ldots,\nu$, are standard DFTs of
size $\nu$. We can therefore use this expression to compute
\eqref{eq:ssfftgen} in $\OO{\nu \log \nu}$ operations; we
pre-multiply and zero-pad the input values $c_j$, apply an FFT,
and post-multiply and truncate the output coefficients $\wh{c}_k$.

For the transforms \eqref{eq:ssfft1dr1} and \eqref{eq:ssfft1dr3}, we have $m
= M$, $n = \NE$, and $\beta - \alpha = K - H - (2 \kappa -
  1) h$. We must therefore choose $K$ and $\nu$
so that
\[\frac{K - H - (2 \kappa - 1) h}{M \NE} = \frac{\pi}{\nu}.\]
Recall from Section \ref{sec:gammaquad} that $h = (K-H)/(\NE + 2 \kappa - 1)$ is chosen so that
\[H + \kappa h + (\NE - 1) h = K - \kappa h.\]
After some manipulation, this expression becomes
\[\frac{K - H - (2 \kappa - 1) h}{\NE} = h = \frac{K-H}{\NE + 2 \kappa - 1}\]
so the condition on $\nu$ becomes
\[\frac{K-H}{M (\NE + 2 \kappa - 1)} = \frac{\pi}{\nu}.\]
We make the convenient---though not essential---choice $\nu = 2
(\NE + 2 \kappa - 1)$, so that $K = H + \pi M/2$.
If we assume $\NE > M/2$, we have $\nu \geq
\max(M,\NE)$, as required. Thus we obtain the restrictions mentioned
above. With this choice of $\nu$, we have an algorithm to compute \eqref{eq:ssfft1dr1} and
\eqref{eq:ssfft1dr3} in $\OO{\NE \log \NE}$
operations. We refer to it as a 
{\em shifted and scaled FFT}.

Thus, the total cost to compute all the Fourier coefficients
is $\OO{M + \NC + \NE \log \NE}$. Using $\NC = \OO{\log T}$, and $\NE = \OO{M}$, we obtain the cost estimate $\OO{M
\log M + \log T}$. To take into account the scaling with $\phimax$ in
the $A \neq 0$ case, we require $\NE = \OO{\phimax M}$, 
giving the estimate $\OO{\phimax M \log \paren{ \phimax M} + \log{T}}$.

\subsubsection{The one-dimensional inverse transform}

\begin{definition}
The {\em inverse DFT} from $\Gamma$ to $[-1,1]$ is defined by 
\[f_j = \sum_{k=1}^N e^{i \zeta_k x_j} \wh{f}_k\]
for $j = 1,\ldots,M$.

In this case, we split the transform into five components:
\begin{multline*}
f_j = f^{(\calE_1)}_j + f^{(\calA_1)}_j + f^{(\calC)}_j + f^{(\calA_3)}_j + f^{(\calE_3)}_j
\\ = \sum_{k=1}^{\NE} e^{i \zeta^{(\calE_1)}_k x_j}
\wh{f}^{(\calE_1)}_k + \sum_{k=1}^{p} e^{i \zeta^{(\calA_1)}_k x_j}
\wh{f}^{(\calA_1)}_k + \sum_{k=1}^{\NC} e^{i \zeta^{(\calC)}_k x_j}
\wh{f}^{(\calC)}_k + \sum_{k=1}^{p} e^{i \zeta^{(\calA_3)}_k x_j}
\wh{f}^{(\calA_3)}_k + \sum_{k=1}^{\NE} e^{i \zeta^{(\calE_3)}_k x_j}
\wh{f}^{(\calE_3)}_k.
\end{multline*}

We again distinguish three inverse transform types, which may defined in
a similar manner to their analogues for the forward transform in
Definition \ref{def:transformtypes}.
\end{definition}
  
The inverse $\calA$-type,
$\calC$-type, and $\calE$-type transforms may be computed
by techniques similar to those described above.

The values corresponding to the 
{\em $\calA$-type} coefficients, namely
$f^{(\calA_1)}_j$ and $f^{(\calA_3)}_j$ for $j=1,\ldots,M$, may be computed 
in $\OO{M}$ operations by direct summation.

The values corresponding to the {\em $\calC$-type} coefficients,
$f^{(\calC)}_j$, may be computed by 
direct summation for small $\NC$, or by a Chebyshev
interpolation scheme for large $\NC$. Using the interpolants
\begin{equation} \label{eq:expintrpp}
  e^{i \zeta x_j} = e^{\tau x_j} e^{i \tau x_j} \approx
\sum_{l=0}^{\NT-1} \rho_{j,l} T_l^H(\tau)
\end{equation}
gives
\[f^{(\calC)}_j \approx \sum_{l=0}^{\NT-1} \rho_{j,l} \sum_{k=1}^{\NC} T_l^H(\tau^{(\calC)}_k)
\wh{f}^{(\calC)}_k\]
which, as before, may be computed for every $j = 1,\ldots,M$ in $\OO{\NC + M}$ operations.

To compute the values corresponding to the {\em $\calE$-type} coefficients, $f^{(\calE_1)}_j$ and
$f^{(\calE_3)}_j$, we use
\eqref{eq:zetakr1} and \eqref{eq:zetakr3} to obtain
\begin{equation} \label{eq:ssifft1dr1}
  f^{(\calE_1)}_j = e^{-H x_j} \sum_{k=1}^{\NE} e^{i \xi^{(\calE_1)}_k x_j} \wh{f}^{(\calE_1)}_k.
\end{equation}
and
\begin{equation} \label{eq:ssifft1dr3}
  f^{(\calE_3)}_j = e^{H x_j} \sum_{k=1}^{\NE} e^{i \xi^{(\calE_3)}_k x_j} \wh{f}^{(\calE_3)}_k,
\end{equation}
respectively. These are shifted and scaled inverse DFTs, with rescaled
outputs, and may be computed in a similar manner to the shifted and
scaled DFTs. Now, our algorithm is built on the standard inverse FFT,
which computes
\[c_j = \sum_{k=1}^n e^{2 \pi i (j-1)(k-1)/n} \wh{c}_k\]
in $\OO{n \log n}$ operations.
The transforms in \eqref{eq:ssifft1dr1} and \eqref{eq:ssifft1dr3} are of
the form
\begin{equation} \label{eq:ssifftgen}
  c_j = \sum_{k=1}^n \wh{c}_k \, e^{i \xi_k x_j} 
\end{equation}
for $j = 1,\ldots,m$, with $\xi_k$ defined as before.
Writing \eqref{eq:ssifftgen} as
\[c_j = e^{i \alpha (-1 + 2(j-1)/m)} \sum_{k=1}^n
\paren{e^{-i (\beta-\alpha)(k-1)/n} \, 
\wh{c}_k} \, 
e^{i 2 (\beta-\alpha)(j-1)(k-1)/mn},
\]
we pre-multiply
and zero-pad the input
coefficients $\wh{c}_k$ to a set of $\nu$ values for properly chosen $\nu$, 
perform an inverse FFT of size $\nu$, and
post-multiply and truncate the outputs. Given the 
parameters corresponding to \eqref{eq:ssifft1dr1} and
\eqref{eq:ssifft1dr3}, the condition on $\nu$ is the same as before, and
we can make the same choice. The cost to compute \eqref{eq:ssifft1dr1}
and \eqref{eq:ssifft1dr3} is therefore again
$\OO{\NE \log \NE}$.

The cost to obtain all of
the values $f_j$ is therefore $\OO{M + \NC + \NE \log
\NE}$, as for the forward transform, and the estimates written with respect
to $M$, $T$, and $\phimax$ are identical.

\subsection{The two-dimensional case} \label{sec:fft2d}

\begin{definition}
The {\em forward DFT} from $[-1,1]^2$ to $\Gamma^2$ is given by
\begin{equation}\label{eq:dft2d}
  \wh{f}_{k_1,k_2} = \sum_{j_1=1}^{M_1} \sum_{j_2=1}^{M_2} e^{-i
    \paren{\zeta_{k_1}
  x_{j_1} + \omega_{k_2} y_{j_2}}} f_{j_1,j_2}
\end{equation}
for $k_1 = 1,\ldots,N_1$ and $k_2 = 1,\ldots,N_2$.
\end{definition}

The additional subscripts on the various indices refer to the spatial
dimension. The discretization nodes in the physical domain are
given by $(x_{j_1},y_{j_2}) \in [-1,1]^2$ for $j_1 = 1,\ldots,M_1$ and $j_2 =
1,\ldots,M_2$. Similarly, the quadrature nodes in the complex-frequency
domain are given by $(\zeta_{k_1},\omega_{k_2}) \in \Gamma^2$ for $k_1 =
1,\ldots,N_1$ and $k_2 = 1,\ldots,N_2$.
We have therefore allowed for the possibility that different
discretizations are used in the two coordinate directions. This may be useful, for example,
if the vector potential $A(t)$ has a larger amplitude in one dimension
than in the other, or if the support of the scalar potential $V$ is
anisotropic. We define $M = M_1 M_2$ to be the total number of spatial
grid points.

We can split the Fourier coefficients $\wh{f}_{k_1,k_2}$ into
subsets corresponding to pairs of subsets of quadrature nodes. For example,
the coefficient
\[\wh{f}^{(\calE_3,\calA_1)}_{k_1,k_2} =\sum_{j_1=1}^{M_1} \sum_{j_2=1}^{M_2} e^{-i
    \paren{\zeta^{(\calE_3)}_{k_1} x_{j_1} +
\omega^{(\calA_1)}_{k_2} y_{j_2}}} f_{j_1,j_2}\]
corresponds to the pair of nodes
$\paren{\zeta^{(\calE_3)}_{k_1},\omega^{(\calA_1)}_{k_2}}$.
Since there
are five types of subsets of nodes in one dimension, there are 25
types of node pairs and therefore of Fourier coefficients in two
dimensions.
The 25 transforms can be divided into six 
general types, which we will denote by 
$(\calA,\calA)$, $(\calA,\calE)$, $(\calA,\calC)$, $(\calC,\calE)$,
$(\calC,\calC)$, and $(\calE,\calE)$. These may be defined in
a straightforward manner. The different subsets of coefficients may
again be computed separately using their corresponding transforms and then
concatenated.

\subsubsection{Fast computation of two-dimensional forward transforms}

There are four $(\calA,\calA)$-type subsets of coefficients;
$\wh{f}^{(\calA_1,\calA_1)}_{k_1,k_2}$, $\wh{f}^{(\calA_1,\calA_3)}_{k_1,k_2}$,
$\wh{f}^{(\calA_3,\calA_1)}_{k_1,k_2}$, and $\wh{f}^{(\calA_3,\calA_3)}_{k_1,k_2}$.
For the first case, we write
\[\wh{f}^{(\calA_1,\calA_1)}_{k_1,k_2} = \sum_{j_1=1}^{M_1} e^{-i
    \zeta^{(\calA_1)}_{k_1} x_{j_1}} \sum_{j_2=1}^{M_2} e^{-i \omega^{(\calA_1)}_{k_2} y_{j_2}} f_{j_1,j_2},\]
where we have rearranged the sums to separate variables. The inner
sums may be computed by $M_1$ one-dimensional $\calA$-type transforms,
and the outer sums by $p$ one-dimensional $\calA$-type transforms, at a cost of $\OO{M}$. 
The other $(\calA,\calA)$-type transforms may be computed similarly.

There are eight $(\calA,\calE)$-type subsets; $\wh{f}^{(\calA_1,\calE_1)}_{k_1,k_2}$,
$\wh{f}^{(\calE_1,\calA_1)}_{k_1,k_2}$, $\wh{f}^{(\calA_1,\calE_3)}_{k_1,k_2}$,
$\wh{f}^{(\calE_3,\calA_1)}_{k_1,k_2}$, $\wh{f}^{(\calA_3,\calE_1)}_{k_1,k_2}$,
$\wh{f}^{(\calE_1,\calA_3)}_{k_1,k_2}$, $\wh{f}^{(\calA_3,\calE_3)}_{k_1,k_2}$, and
$\wh{f}^{(\calE_3,\calA_3)}_{k_1,k_2}$.
For the first case, after plugging in \eqref{eq:zetakr1} and
rearranging, we obtain
\[\wh{f}^{(\calA_1,\calE_1)}_{k_1,k_2} = \sum_{j_2=1}^{M_2} e^{-i
  \xi^{(\calE_1)}_{k_2} y_{j_2}} \paren{e^{H_2 y_{j_2}} \sum_{j_1=1}^{M_1}
e^{-i \zeta^{(\calA_1)}_{k_1} x_{j_1}} f_{j_1,j_2}}.\]
The inner sums may be computed by $M_2$ $\calA$-type transforms, and the outer sums 
by $p$ $\calE$-type transforms, at a cost of 
$\OO{M + \NE_1 \log \NE_1}$. Other $(\calA,\calE)$-type transforms are computed in the same manner,
and the total cost of computing them all
is of the order $\OO{M + \NE_1 \log \NE_1 + \NE_2 \log \NE_2}$. We note
that writing the sums in a different order would lead to an algorithm
with a greater computational cost; in all cases, the $\calA$-type transform should be taken as the
inner transform.

There are four $(\calA,\calC)$-type subsets;
$\wh{f}^{(\calA_1,\calC)}_{k_1,k_2}$,
$\wh{f}^{(\calC,\calA_1)}_{k_1,k_2}$, $\wh{f}^{(\calA_3,\calC)}_{k_1,k_2}$,
and $\wh{f}^{(\calC,\calA_3)}_{k_1,k_2}$.
Separating the sums in the first case gives
\[\wh{f}^{(\calA_1,\calC)}_{k_1,k_2} = \sum_{j_2=1}^{M_2} e^{-i
    \omega^{(\calC)}_{k_2} y_{j_2}} \sum_{j_1=1}^{M_1} e^{-i \zeta^{(\calA_1)}_{k_1} x_{j_1}} f_{j_1,j_2}.\]
The inner sums may be computed by $M_2$ $\calA$-type transforms, and the outer
sums by $p$ $\calC$-type transforms, at a cost of $\OO{M + \NC_2}$. The cost of computing
all $(\calA,\calC)$-type transforms is $\OO{M + \NC_1 + \NC_2}$. 
For efficiency, the $\calA$-type transform should be taken as the inner
transform.

There are four $(\calC,\calE)$-type subsets;
$\wh{f}^{(\calC,\calE_1)}_{k_1,k_2}$, $\wh{f}^{(\calE_1,\calC)}_{k_1,k_2}$,
$\wh{f}^{(\calC,\calE_3)}_{k_1,k_2}$, and $\wh{f}^{(\calE_3,\calC)}_{k_1,k_2}$. Unlike
the first three cases above, we do not simply
separate variables and repeatedly apply the
one-dimensional algorithms. Using
\eqref{eq:zetakr1} and rearranging the sums in the first case gives 
\[\wh{f}^{(\calC,\calE_1)}_{k_1,k_2} = \sum_{j_2=1}^{M_2} e^{-i
  \xi^{(\calE_1)}_{k_2} y_{j_2}} \paren{e^{H_2 y_{j_2}} \sum_{j_1=1}^{M_1}
e^{-i \zeta^{(\calC)}_{k_1} x_{j_1}} f_{j_1,j_2}}.\]
Using the interpolant \eqref{eq:expintrpm} in the $\calC$-type transform and
rearranging the sums again gives
\[\wh{f}^{(\calC,\calE_1)}_{k_1,k_2} = \sum_{l=0}^{\NT_1-1} T_l^{H_1}
  (\tau^{(\calC)}_{k_1}) \sum_{j_2=1}^{M_2} e^{-i
  \xi^{(\calE_1)}_{k_2} y_{j_2}} \paren{e^{H_2 y_{j_2}} \sum_{j_1=1}^{M_1}
\lambda_{l,j_1} f_{j_1,j_2}}.\]
The inner sums may be computed directly for each $j_2 =
1,\ldots,M_2$, the middle sum by $\NT_1$ $\calE$-type transforms, and the
outer sum directly 
for each $k_2 = 1,\ldots,\NE_2$. The cost of computing this transform is
therefore $\OO{M + \NE_2 \log \NE_2 + \NE_2 \NC_1}$, and the cost of
computing all $(\calC,\calE)$-type transforms is $\OO{M + \NE_1 \log \NE_1 + \NE_2
  \log \NE_2 + \NE_1 \NC_1 + \NE_2 \NC_2}$.

There is only one $(\calC,\calC)$-type subset: $\wh{f}^{(\calC,\calE_1)}_{k_1,k_2}$.
Plugging in the interpolant \eqref{eq:expintrpm} and rearranging gives
\[\wh{f}^{(\calC,\calC)}_{k_1,k_2} = \sum_{l_1=0}^{\NT_1-1} T_{l_1}^{H_1}
  (\tau^{(\calC)}_{k_1}) \sum_{l_2=0}^{\NT_2-1} T_{l_2}^{H_2}
  (\sigma^{(\calC)}_{k_2}) \sum_{j_1=1}^{M_1}
\lambda_{l_1,j_1} \sum_{j_2=1}^{M_2}
\lambda_{l_2,j_2} f_{j_1,j_2},\]
where we have used the nodes
$(\tau^{(\calC)}_{k_1},\sigma^{(\calC)}_{k_2}) \in [-H,H]^2$ as the
quadrature nodes in the two-dimensional parameter space.
Each sum may be computed directly at a total cost of $\OO{M
+ \NC_1 \NC_2}$.

There are four $(\calE,\calE)$-type subsets;
$\wh{f}^{(\calE_1,\calE_1)}_{k_1,k_2}$, $\wh{f}^{(\calE_1,\calE_3)}_{k_1,k_2}$,
$\wh{f}^{(\calE_3,\calE_1)}_{k_1,k_2}$, and $\wh{f}^{(\calE_3,\calE_3)}_{k_1,k_2}$.
After using the substitutions \eqref{eq:zetakr1} and \eqref{eq:zetakr3},
these may be written as shifted and scaled two-dimensional DFTs. The generalization of the
shifted and scaled FFT from one to two dimensions is straightforward,
and we omit the details. It uses a standard
two-dimensional FFT of size $\nu_1 \times \nu_2$, with $\nu_1$ and $\nu_2$ chosen as in the
one-dimensional case using the quadrature parameters corresponding to their
dimensions. We obtain an algorithm with a cost of
$\OO{\NE_1 \NE_2 \log \paren{\NE_1 \NE_2}}$. 

\bigskip

Combining all cases, we find that the total cost to compute the
two-dimensional forward transform is
\[
\OO{M + \NE_1 \NC_1 + \NE_2 \NC_2 + \NC_1 \NC_2 + \NE_1 \NE_2
  \log\paren{\NE_1 \NE_2}}~.
\]
If we take $A = 0$ and use the scalings with respect to $M$ and $T$, this expression becomes
\[\OO{M \log M + (M_1 + M_2) \log T + \log^2 T}.\]
If we take into account the scaling with respect to a field
$A(t) = (A_1(t),0)^T$ aligned with the first coordinate dimension, we
obtain the estimate
\[\OO{\phimax_1 M \log\paren{\phimax_1 M} + \paren{\phimax_1 M_1 + M_2}
\log T + \log^2 T}.\]
In the general
case $A(t) = (A_1(t),A_2(t))^T$, the estimate is
\[\OO{\phimax_1 \phimax_2 M \log\paren{\phimax_1 \phimax_2 M} +
    \paren{\phimax_1 M_1 + \phimax_2 M_2} \log T + \log^2 T}.\]

\subsubsection{The two-dimensional inverse transform}

\begin{definition}
The {\em inverse DFT} from $\Gamma^2$ to $[-1,1]^2$ is given by
\[f_{j_1,j_2} = \sum_{k_1=1}^{N_1} \sum_{k_2=1}^{N_2} e^{i
    \paren{\zeta_{k_1} x_{j_1} +
\omega_{k_2} y_{j_2}}} \wh{f}_{k_1,k_2}\]
for $j_1 = 1,\ldots,M_1$ and $j_2 = 1,\ldots,M_2$.
\end{definition}

The transform may be split into a sum of 25 terms corresponding to
different pairs of subsets of quadrature nodes. For example,
\[f^{(\calE_3,\calA_1)}_{j_1,j_2} = \sum_{k_1=1}^{\NE_1} \sum_{k_2=1}^p
  e^{i\paren{\zeta^{(\calE_3)}_{k_1} x_{j_1} + \omega^{(\calA_1)}_{k_2}
  y_{j_2}}} \wh{f}^{(\calE_3,\calA_1)}_{k_1,k_2}\]
corresponds to the pair of nodes
$\paren{\zeta^{(\calE_3)}_{k_1},\omega^{(\calA_1)}_{k_2}}$. As before, there
are six transform types. The algorithms used for each transform type are closely
related to their analogues in the forward transform and have the same algorithmic
complexity.

As for the forward transform, the $(\calA,\calA)$-type inverse transforms can be computed
by separation of variables and direct summation.
For the $(\calA,\calE)$-type transforms,
we use separation of variables and apply
the $\calA$ and $\calE$-type one-dimensional transforms, except in the reverse order:
the $\calE$-type transform must be taken as the inner transform to obtain
the same complexity as for the forward transform.

For the $(\calA,\calC)$-type transforms, as for the 
$(\calA,\calE)$-type, we
separate variables and apply the one-dimensional transforms in the
reverse order:  the $\calC$-type transform is taken as the inner transform.

For the $(\calC,\calE)$-type transforms, 
we use \eqref{eq:zetakr1} and the interpolant
\eqref{eq:expintrpp}, and rearrange in the form:
\[f^{(\calC,\calE_1)}_{j_1,j_2} = e^{-H_2 y_{j_2}} \sum_{l=0}^{\NT_1-1}
  \rho_{j_1,l} \sum_{k_2=1}^{\NE_2} e^{i \xi^{(\calE_1)}_{k_2} y_{j_2}}
  \sum_{k_1=1}^{\NC_1} T^{H_1}_l\paren{\tau^{(\calC)}_{k_1}}
\wh{f}^{(\calC,\calE_1)}_{k_1,k_2}.\]
The inner and outer transforms may be computed by direct summation, and
the middle as an $\calE$-type transform. The other $(\calC,\calE)$-type inverse transforms
are handled analogously.

The $(\calC,\calC)$-type inverse transform can be written,
using the interpolant \eqref{eq:expintrpp}, in the form
\[f^{(\calC,\calC)}_{j_1,j_2} = \sum_{l_1=0}^{\NT_1-1} \rho_{j_1,l_1}
  \sum_{l_2=0}^{\NT_2-1} \rho_{j_2,l_2} \sum_{k_1=1}^{\NC_1}
  T_{l_1}^{H_1}\paren{\tau^{(\calC)}_{k_1}} \sum_{k_2=1}^{\NC_2}
  T_{l_2}^{H_2}\paren{\sigma^{(\calC)}_{k_2}} \wh{f}^{(\calC,\calC)}_{j_1,j_2}.\]
Each transform may be computed by direct summation.
Finally, the $(\calE,\calE)$-type transforms
may be computed using a two-dimensional shifted and scaled inverse
FFT, which is again a simple generalization of the one-dimensional case.

\subsection{The three-dimensional case} \label{sec:fft3d}

The techniques we have
described may be used in the same manner to
design a fast algorithm for the three-dimensional case. There are
$5^3 = 125$ subsets of distinct types of quadrature node triplets, 
and 10 distinct transform types. If
$A = 0$, one can derive an algorithm with a cost of
\[\OO{M \log M +
  \paren{M_1 M_2 + M_1 M_3 + M_2 M_3} \log T + \paren{M_1 + M_2 + M_3}
\log^2 T + \log^3 T}.\]
The estimate for the general
case including a vector potential is more involved and is omitted.
A practical rule of thumb is that for each
non-zero component $A_i$ of $A$, the cost increases approximately by
a factor $\phimax_i$. 

\section{Analysis of the complex-frequency representation}
\label{sec:analysis}

In this section we expand on the discussion in Section \ref{sec:cfrep},
presenting analysis supporting our choice of
the contour $\Gamma$ and our quadrature estimates.
Our goal is to establish the accuracy of the discretizations
\eqref{eq:uciftdisc} and \eqref{eq:Vuciftdisc} of the complex Fourier representations
\begin{equation} \label{eq:ucift}
  u(x,t) = \frac{1}{2 \pi} \int_\Gamma e^{i \zeta x} \uhat(\zeta,t) \, d \zeta
\end{equation}
and
\begin{equation} \label{eq:Vucift}
  (Vu)(x,t) = \frac{1}{2 \pi} \int_\Gamma e^{i \zeta x}
  \Vuhat(\zeta,t) \, d \zeta,
\end{equation}
respectively, using $N = \OO{(1 + \phimax) K_0 + \log T} = \OO{(1 +
\phimax) M + \log T}$ quadrature
nodes. Here, $K_0$ denotes a truncation parameter for the
classical Fourier representation that guarantees a prescribed accuracy, 
as in \eqref{eq:ifturealtrunc}.
We will first show that these integrals may be truncated to contours $\Gamma_K$ with
$K = K_0 + \OO{1}$, thereby establishing $M = \OO{K_0}$, since $M = \OO{K}$
in our algorithm. We will
then show that the truncated integrals may be accurately resolved by the
stated number of quadrature nodes. It is sufficient to focus on the one-dimensional case, since the
$d$-dimensional quadrature rule is a tensor product of the
one-dimensional rules.

\subsection{Analysis of truncation}\label{sec:gammatrunc}

Here we demonstrate that our deformation of the inverse Fourier
transform from $\RR$ to $\Gamma$ does not significantly increase the
real-frequency truncation of the integral. In particular,
we show that we may choose a truncation $\abs{\Re(\zeta)} \leq K = K_0 +
\OO{1}$, with the $\OO{1}$ scaling depending only on $H$ and
$\varepsilon$.

We first show that the magnitude of the analytic
continuation of the Fourier transform of a function $f \in
C^\infty([-1,1])$ is controlled by its nearby values on the real line.
\begin{lemma} \label{lem:fhatcontinuation}
  For any imaginary shift $\eta > 0$, there is a constant $C > 0$ such
  that the following holds:
  for every $\varepsilon > 0$ there is an $L > 0$ such that
  for every $f \in C^\infty([-1,1])$,
  \[\abs{\wh{f}(\xi + i\eta)} \leq C \max_{-L \leq \nu \leq
    L} \abs{\wh{f}(\xi+\nu)} + \norm{f}_2 \varepsilon\]
  for all $\xi \in \RR$. The dependence of $C$ on $\eta$ is continuous,
  $C = C(\eta)$,
  and for fixed $\varepsilon$ the dependence of $L$ on $\eta$ is also
  continuous, $L = L(\eta)$.
\end{lemma}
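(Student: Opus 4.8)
The plan is to exploit the fact that for $f\in C^\infty([-1,1])$ the Fourier transform $\wh f(\zeta)$ is entire of exponential type at most $1$ (by Paley--Wiener), so that $\wh f(\xi+i\eta)$ can be recovered from the values $\wh f(\xi+\nu)$, $\nu\in\RR$, by a convolution against a fixed reconstruction kernel, and the tails of that convolution are small \emph{relative to} $\norm f_2$. Concretely, I would first observe that by Plancherel $\norm{\wh f}_{L^2(\RR)}=\sqrt{2\pi}\,\norm f_2$, and by the support constraint $\wh f$ extends to an entire function with $\abs{\wh f(\xi+i\eta)}\le e^{\abs\eta}\norm f_1\le \sqrt2\,e^{\abs\eta}\norm f_2$. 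The main tool is a representation of $\wh f(\xi+i\eta)$ as an integral of $\wh f(\xi+\nu)$ against an explicit kernel: since $g(\zeta):=\wh f(\zeta)$ lies in the Paley--Wiener space $PW_1$, one has the shift formula
\[
g(\xi+i\eta)=\int_{-\infty}^{\infty} g(\xi+\nu)\,\Phi_\eta(\nu)\,d\nu,
\]
where $\Phi_\eta$ is a fixed Schwartz-class (indeed, entire, rapidly decaying on $\RR$) kernel independent of $f$ and of $\xi$; for instance one may take $\Phi_\eta$ to be the inverse Fourier transform of $e^{-x\eta}$ multiplied by a smooth cutoff $\chi(x)$ equal to $1$ on $[-1,1]$ and supported in $[-1-\delta,1+\delta]$, so that $\widehat{\chi}$ is Schwartz and the identity holds because $f$ is supported in $[-1,1]$ where $\chi\equiv1$.

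Granting such a kernel, the argument is then routine. I would split the integral at $\abs\nu=L$:
\[
\abs{g(\xi+i\eta)}\le \Bigl(\int_{-L}^{L}\abs{\Phi_\eta(\nu)}\,d\nu\Bigr)\max_{\abs\nu\le L}\abs{g(\xi+\nu)}
\;+\;\int_{\abs\nu> L}\abs{\Phi_\eta(\nu)}\,\abs{g(\xi+\nu)}\,d\nu.
\]
The first term gives the constant $C=C(\eta):=\norm{\Phi_\eta}_{L^1(\RR)}$, which depends continuously (in fact analytically) on $\eta$ since $\Phi_\eta$ does. For the tail, apply Cauchy--Schwarz: the integral is at most $\norm{g}_{L^2(\RR)}\bigl(\int_{\abs\nu>L}\abs{\Phi_\eta(\nu)}^2\,d\nu\bigr)^{1/2}=\sqrt{2\pi}\,\norm f_2\,\bigl(\int_{\abs\nu>L}\abs{\Phi_\eta}^2\bigr)^{1/2}$. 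Because $\Phi_\eta$ is Schwartz, the quantity $\bigl(\int_{\abs\nu>L}\abs{\Phi_\eta(\nu)}^2\,d\nu\bigr)^{1/2}$ tends to $0$ as $L\to\infty$, so given $\varepsilon>0$ one chooses $L=L(\eta)$ with $\sqrt{2\pi}\,\bigl(\int_{\abs\nu>L}\abs{\Phi_\eta}^2\bigr)^{1/2}\le\varepsilon$, yielding the claimed bound; continuity of $L(\eta)$ in $\eta$ (for fixed $\varepsilon$) follows from the joint continuity of $\eta\mapsto\Phi_\eta$ in the Schwartz topology, which makes $L\mapsto \int_{\abs\nu>L}\abs{\Phi_\eta}^2$ jointly continuous and monotone decreasing in $L$.

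The main obstacle is pinning down the reconstruction kernel $\Phi_\eta$ and verifying the shift identity with the correct decay: one needs $\Phi_\eta$ independent of $f$ and of $\xi$, with $L^1$ norm giving $C(\eta)$ and with good $L^2$ tail decay giving $L(\eta)$. The cleanest route is the cutoff construction above --- write $g(\xi+i\eta)=\int e^{-i(\xi+i\eta)x}f(x)\,dx=\int e^{-i(\xi+i\eta)x}\chi(x)f(x)\,dx$, insert the Fourier inversion of $e^{\eta x}\chi(x)$ as a function of $x$ (which is valid since $e^{\eta x}\chi(x)$ is smooth and compactly supported, hence has Schwartz transform), and interchange integrals to land on the stated convolution with $\Phi_\eta(\nu)=\frac{1}{2\pi}\widehat{(e^{\eta\,\cdot}\chi)}(-\nu)$ up to a harmless reflection/conjugation; one then records that $\Phi_\eta$ is Schwartz with constants depending continuously on $\eta$. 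Everything after that is the two-line Cauchy--Schwarz splitting above, together with Plancherel to replace $\norm{g}_{L^2}$ by $\sqrt{2\pi}\,\norm f_2$.
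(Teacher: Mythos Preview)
Your proposal is correct and follows essentially the same route as the paper: introduce a smooth cutoff $\chi$ (the paper calls it $\psi$) that is identically $1$ on $[-1,1]$, write $\wh f(\xi+i\eta)$ as the convolution $\frac{1}{2\pi}\bigl(\wh f * \wh{\phi}_\eta\bigr)(\xi)$ with $\phi_\eta(x)=e^{\eta x}\chi(x)$, then split the convolution at $\abs{\nu}=L$, bound the near part by $\norm{\wh{\phi}_\eta}_1\max_{\abs{\nu}\le L}\abs{\wh f(\xi+\nu)}$, and control the tail by Cauchy--Schwarz and Plancherel. The only cosmetic difference is that you frame the kernel via Paley--Wiener language, whereas the paper simply inserts the cutoff directly; the constants $C(\eta)=\norm{\wh{\phi}_\eta}_1$ and the choice of $L(\eta)$ via the $L^2$ tail of $\wh{\phi}_\eta$ coincide.
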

\begin{proof}
Let $\psi \in C_c^\infty(\RR)$, the space of smooth functions of compact
  support, with $\psi \equiv 1$ on $[-1,1]$. Then since $f \in
  C^\infty([-1,1])$, we have, for any $\xi \in \RR$,
  \be
    \wh{f}(\xi + i \eta) = \int_{-\infty}^\infty e^{-i \xi x}
    \paren{e^{\eta x} f(x)} \, dx
    = \int_{-\infty}^\infty e^{-i \xi x}
    \paren{e^{\eta x} \psi(x) f(x)} \, dx
    = \frac{1}{2 \pi} \paren{\wh{f} \ast \wh{\phi}_\eta}(\xi),
\ee
  where $\phi_\eta(x) = e^{\eta x} \psi(x)$.
  Since $\psi \in C_c^\infty(\RR)$, so is $\phi_\eta$, and
  $\wh{\phi}_\eta$ is rapidly decaying.
  In particular, for each $n \in \ZZ^+$,
\[\wh{\phi}_\eta(\xi) = \int_{-\infty}^\infty e^{-i \xi x}
  \phi_\eta(x) \, dx = \frac{1}{(i \xi)^n} \int_{-\infty}^\infty e^{-i \xi x}
  \phi_\eta^{(n)} (x) \, dx\]
so
\[\abs{\wh{\phi}_\eta(\xi)} \leq
  \frac{\norm{\phi_\eta^{(n)}}_1}{\abs{\xi}^n}. \]
Therefore given $\varepsilon > 0$, there is an $L > 0$ depending
  continuously on $\eta$ so that
\begin{equation}\label{eq:phitrunc}
\sqrt{2 \pi \int_{\abs{\xi}>L}
\abs{\wh{\phi}_\eta(\xi)}^2 \, d \xi} < \varepsilon.
\end{equation}
We now split the frequency domain convolution into two terms,
\[\paren{\wh{f} \ast \wh{\phi}_\eta}(\xi) = \int_{-L}^L
  \wh{f}(\xi-\nu) \wh{\phi}_\eta(\nu) \, d \nu + \int_{|\nu|>L}
\wh{f}(\xi-\nu) \wh{\phi}_\eta(\nu) \, d \nu.\]
To bound the first term, we have
\[\abs{\int_{-L}^L
  \wh{f}(\xi-\nu) \wh{\phi}_\eta(\nu) \, d \nu} \leq 
\norm{\wh{\phi}_\eta}_1 \max_{-L \leq \nu \leq L} \abs{\wh{f}(\xi-\nu)}.\]
For the second term, we have
\[\abs{\int_{|\nu|>L}
  \wh{f}(\xi-\nu) \wh{\phi}_\eta(\nu) \, d \nu} \leq
  \sqrt{\int_{|\nu|>L} \abs{\wh{f}(\xi-\nu)}^2 \, d \nu} \cdot
  \sqrt{\int_{|\nu|>L} \abs{\wh{\phi}_\eta(\nu)}^2 \, d\nu} \leq
  \frac{\norm{\wh{f}}_2 \varepsilon}{\sqrt{2 \pi}} = \norm{f}_2 \varepsilon
\]
from \eqref{eq:phitrunc}.
Combining these bounds gives
the result with $C = \norm{\wh{\phi}_\eta}_1$.
\end{proof}

The next lemma relates the truncation of the classical
Fourier representation of a function $f \in C^\infty([-1,1])$ to that of
the complex Fourier representation modulated by an analytic weight
function $g$. The weight function is included for later convenience.

\begin{lemma}\label{lem:fifttrunc}
  Let $f \in C^\infty([-1,1])$,
  and $\Gamma$ defined
  by \eqref{eq:gammadef} as above with fixed $H > 0$. Let $g$ be
  analytic in an open set containing the strip $\Im(\zeta) \leq H$ with $|g| \leq B$ on
  $\Gamma$.
  Then there is a $C$ such that the following holds:
  for any $\varepsilon > 0$, and $K_0 > H$
  sufficiently large that
  \begin{equation}\label{eq:ftrunceps}
    \int_{\abs{\xi}>K_0} \abs{\wh{f}(\xi)} \, d \xi < \varepsilon
  \end{equation}
  and
  \begin{equation}\label{eq:fvaleps}
    \abs{\wh{f}(\xi)} < \varepsilon
  \end{equation}
  for $\abs{\xi}>K_0$, there is an $L>0$ so that if $K =
  K_0 + L$, then 
  \[ \abs{\int_{\Gamma \backslash \Gamma_{K}} e^{i \zeta x}
    g(\zeta) \wh{f}(\zeta) \, d \zeta} < B C \varepsilon
\qquad
\mbox{ for all } x \in [-1,1]~.
\]
Here, $L$ depends only on $H$ and $\varepsilon$, but not on $f$.
$C$ depends only on $H$ and $\norm{f}_2$,
and in particular not on $\varepsilon$.
\end{lemma}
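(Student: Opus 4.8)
The plan is to split the contour $\Gamma \backslash \Gamma_K$ into its three pieces---the two horizontal tails on $\Gamma_1, \Gamma_3$ beyond $|\tau| = K$, and (since $\Gamma_2$ lies entirely within $|\tau| \le H < K_0 < K$) nothing on $\Gamma_2$---so only the horizontal tails contribute. On each tail, parametrize $\zeta = \tau \pm iH$ with $|\tau| > K$; then $|e^{i\zeta x}| = e^{\mp Hx} \le e^{H}$ since $x \in [-1,1]$, and $|g(\zeta)| \le B$ by hypothesis. The remaining factor is $|\wh{f}(\tau \pm iH)|$, which is exactly what Lemma~\ref{lem:fhatcontinuation} controls: applied with $\eta = H$ (and with the $\varepsilon$ of that lemma chosen appropriately---see below), it gives $|\wh{f}(\tau \pm iH)| \le C_0 \max_{|\nu| \le L_0} |\wh{f}(\tau + \nu)| + \norm{f}_2 \varepsilon$ for a constant $C_0 = C_0(H)$ and an $L_0 = L_0(H,\varepsilon)$.

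\medskip

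\noindent First I would fix $L = L_0$ (the value produced by Lemma~\ref{lem:fhatcontinuation} for the given $\varepsilon$ and $\eta = H$) and set $K = K_0 + L$. For $|\tau| > K$ we have $|\tau + \nu| > K_0$ whenever $|\nu| \le L$, so by \eqref{eq:fvaleps} the max term is bounded by $\varepsilon$ pointwise, giving $|\wh{f}(\tau \pm iH)| \le (C_0 + \norm{f}_2)\varepsilon$ pointwise on the tails. But a pointwise bound of size $O(\varepsilon)$ is not enough, because we then integrate over an infinite tail; so instead I would keep the max term inside the integral and bound
\[
\int_{|\tau|>K} \max_{|\nu|\le L} |\wh{f}(\tau+\nu)| \, d\tau
\;\le\; \int_{|\xi| > K_0} |\wh{f}(\xi)| \, d\xi \cdot (\text{const})
\;<\; (\text{const})\,\varepsilon
\]
using \eqref{eq:ftrunceps}, after noting that each value $\wh{f}(\sigma)$ with $|\sigma| > K_0$ is counted at most $2L$ times as $\tau$ ranges over $|\tau| > K$ (a standard shift/overlap argument for the sliding maximum). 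The $\norm{f}_2\varepsilon$ term, however, when integrated over the infinite tail, diverges---so this naive choice of the auxiliary $\varepsilon$ in Lemma~\ref{lem:fhatcontinuation} must be refined: I would instead invoke that lemma with $\eta = H$ and a fixed small target, say target $1$, producing a fixed $L_1 = L_1(H)$ and the structural bound, and separately observe that the $\norm{f}_2 \cdot (\text{that target})$ contribution is itself absorbed because $\wh{\phi}_\eta$ in the proof of Lemma~\ref{lem:fhatcontinuation} is genuinely $L^1$, not merely pointwise small---i.e. the cleanest route is to re-derive the needed inequality directly from $\wh{f}(\zeta) = \tfrac{1}{2\pi}(\wh{f} * \wh{\phi}_H)(\xi)$ and split the convolution at $|\nu| = L$, bounding the near part by $\norm{\wh{\phi}_H}_1 \cdot \max$ and the far part by Cauchy--Schwarz against the $L^2$-tail of $\wh{\phi}_H$, then integrating in $\tau$.

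\medskip

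\noindent Assembling: $|\wh{f}(\tau \pm iH)|$ integrated over $|\tau|>K$ is bounded by $C_1 \int_{|\xi|>K_0}|\wh{f}(\xi)|\,d\xi + C_2 \norm{\wh{f}}_2 \cdot (\text{tail of }\wh{\phi}_H\text{ beyond }L)$, and choosing $L$ large (depending only on $H$ and $\varepsilon$, via the rapid decay of $\wh{\phi}_H$) makes the second term $< \varepsilon$ while \eqref{eq:ftrunceps} makes the first $< C_1 \varepsilon$. Multiplying through by the $e^H$ from $|e^{i\zeta x}|$ and the $B$ from $|g|$, and summing the two tails, yields $\bigl| \int_{\Gamma \backslash \Gamma_K} e^{i\zeta x} g(\zeta) \wh{f}(\zeta)\,d\zeta \bigr| < BC\varepsilon$ with $C$ absorbing $e^H$, $\norm{\wh{\phi}_H}_1$, and the $\norm{f}_2$-dependence---so $C = C(H, \norm{f}_2)$ and $L = L(H,\varepsilon)$ as claimed, uniformly in $x \in [-1,1]$ and in $f$.

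\medskip

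\noindent The main obstacle is the bookkeeping of the two $\varepsilon$-type quantities: the ``truncation'' $\varepsilon$ governing $\wh{f}$'s real-line tail (external, given) versus the auxiliary small parameter implicit in Lemma~\ref{lem:fhatcontinuation} governing how well $\phi_H$'s spectrum is truncated (internal, at our disposal). One must be careful that integrating the $\norm{f}_2\varepsilon$ error term of Lemma~\ref{lem:fhatcontinuation} over an \emph{unbounded} tail does not blow up; the resolution is that this term actually comes from an $L^2$-tail of $\wh{\phi}_H$ and, after a Cauchy--Schwarz step performed \emph{before} integrating in $\tau$, stays finite and can be made small by enlarging $L$. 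Everything else---the overlap-counting for the sliding max, the $e^H$ and $B$ factors---is routine.
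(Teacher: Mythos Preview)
Your approach---estimating $\int_{|\tau|>K}|\wh{f}(\tau\pm iH)|\,d\tau$ directly on the horizontal tails of $\Gamma$ via the convolution identity $\wh{f}(\tau\pm iH)=\tfrac{1}{2\pi}(\wh{f}\ast\wh{\phi}_{\pm H})(\tau)$---is genuinely different from the paper's, and it has a real gap in the far-part estimate. You write that after Cauchy--Schwarz the far part contributes $C_2\|\wh{f}\|_2\cdot(\text{$L^2$-tail of }\wh{\phi}_H)$ and ``stays finite''. It does not: Cauchy--Schwarz in $\nu$ gives the \emph{pointwise} bound $\bigl|\int_{|\nu|>L}\wh{f}(\tau-\nu)\wh{\phi}_{\pm H}(\nu)\,d\nu\bigr|\le \|\wh{f}\|_2\bigl(\int_{|\nu|>L}|\wh{\phi}_{\pm H}|^2\bigr)^{1/2}$, which is independent of $\tau$ and therefore integrates to $+\infty$ over $|\tau|>K$. (Your earlier ``sliding-max overlap'' argument for the near part is also not valid: the maximal function is only weak-type $(1,1)$, so $\int\max_{|\nu|\le L}|\wh{f}(\tau+\nu)|\,d\tau$ is \emph{not} controlled by $\int|\wh{f}|$; fortunately your later convolution version of the near part, handled by Fubini, is fine.) The only way to rescue the far part along these lines is Fubini rather than Cauchy--Schwarz, which yields $\|\wh{f}\|_1\cdot\int_{|\nu|>L}|\wh{\phi}_{\pm H}|$; this is finite and can be made small in $L$, but the resulting constant depends on $\|\wh{f}\|_1$, not on $\|f\|_2$, so it does not establish the lemma as stated.

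The paper avoids this difficulty by a different idea: it does \emph{not} integrate $|\wh{f}(\tau\pm iH)|$ over the infinite tail at all. Instead it applies Cauchy's theorem to deform, say, $\int_K^\infty e^{i(\tau-iH)x}g(\tau-iH)\wh{f}(\tau-iH)\,d\tau$ back to the real axis plus a single \emph{finite} vertical segment at $\Re\zeta=K$:
\[
\int_K^\infty e^{i\xi x}g(\xi)\wh{f}(\xi)\,d\xi \;+\; i\int_0^H e^{i(K-i\eta)x}g(K-i\eta)\wh{f}(K-i\eta)\,d\eta~.
\]
The first piece is bounded using \eqref{eq:ftrunceps}. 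The second piece has length $H$, so Lemma~\ref{lem:fhatcontinuation} applied pointwise in $\eta\in[0,H]$ (together with \eqref{eq:fvaleps} and the choice $K=K_0+L$) gives a bound $H(C+\|f\|_2/\sqrt{2\pi})\varepsilon$ with the correct $\|f\|_2$-dependence and with $L$ depending only on $H,\varepsilon$. The contour deformation is the missing ingredient in your argument: it replaces an integral over an unbounded set, where the error term of Lemma~\ref{lem:fhatcontinuation} cannot be integrated, by one over a bounded segment, where it can.
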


\begin{proof}
  For any $K > K_0$, we have 
  \[\int_{\Gamma \backslash \Gamma_{K}} e^{i \zeta x} g(\zeta)
    \wh{f}(\zeta) \, d \zeta = \int_{K}^\infty e^{i
    (\tau-iH) x} g(\tau - iH) \wh{f}(\tau - iH) \, d \tau +
      \int_{-\infty}^{-K} e^{i
      (\tau - iH) x} g(\tau - iH) \wh{f}(\tau - iH) \, d \tau.
  \]
  We analyze the first integral; the analysis for the second is
  identical. The integrand is analytic in an open set containing the
  strip $\Im(\zeta) \leq H$, so Cauchy's theorem gives
  \[\int_{K}^\infty e^{i (\tau - iH) x} g(\tau - iH) \wh{f}(\tau - iH) \, d \tau = \int_{K}^\infty e^{i \xi x}
    g(\xi) \wh{f}(\xi) \, d \xi + i \int_0^H e^{i (K - i \eta) x}
    g(K - i \eta) \wh{f}(K - i \eta) \, d \eta\]
  and we have
  \begin{equation}\label{eq:tailtriangle}
    \abs{\int_{K}^\infty e^{i (\tau - iH) x} g(\tau - iH) \wh{f}(\tau - iH) \, d
    \tau} \leq B \paren{\int_{K}^\infty \abs{\wh{f}(\xi,t)} \, d \xi +
    e^H \int_0^H \abs{\wh{f}(K - i \eta)} \, d \eta}
  \end{equation}
  for every $x \in [-1,1]$. The first term in parentheses is bounded by $\varepsilon$, using
  \eqref{eq:ftrunceps}. To bound the second term, we apply Lemma
  \ref{lem:fhatcontinuation}, with our choice of $\varepsilon$.
  We obtain constants $C = \max_{0 \leq \eta \leq H} C(\eta)$ and $L =
  \max_{0 \leq \eta \leq H} L(\eta)$ so that
  \[
  \int_0^H \abs{\wh{f}(K - i \eta)} \, d \eta \leq \max_{-L \leq \nu \leq L}
  \abs{\wh{f}(K+\nu)} CH + \frac{H}{\sqrt{2\pi}} \norm{f}_2 \varepsilon
  ~.
  \]
  If we take $K = K_0 + L$, then \eqref{eq:fvaleps} implies
  \[\int_0^H \abs{\wh{f}(K - i \eta)} \, d \eta \leq \paren{CH + \frac{H}{\sqrt{2
  \pi}} \norm{f}_2} \varepsilon.\]
  Combining this with \eqref{eq:tailtriangle}, we obtain
  \[\abs{\int_{K}^\infty e^{i (\tau - iH) x} g(\tau - iH) \wh{f}(\tau - iH) \, d
    \tau} \leq B \paren{1 + e^H \paren{CH + \frac{H}{\sqrt{2
  \pi}} \norm{f}_2}} \varepsilon,\]
  which gives the result, with $C$ redefined as the expression in
  the outer parentheses.
\end{proof}

We can now state our main result on the truncation of the complex Fourier
representations \eqref{eq:ucift} and \eqref{eq:Vucift}.
\begin{theorem}\label{thm:uhattrunc}
  Let $u$ satisfy \eqref{eq:schrodfree} and the
  assumptions made above on $u_0$,
  $V$, and $A$ for the free space problem. Let $\Gamma$ be as described above with 
  fixed $H > 0$. Let $\varepsilon > 0$, and suppose $K_0$ is sufficiently
  large so that for all $t \in [0,T]$,
  \begin{equation}\label{eq:trunceps}
    \int_{\abs{\xi}>K_0} \abs{\uzhat(\xi)} \, d \xi < \varepsilon,
    \qquad\qquad
    \int_{\abs{\xi}>K_0} \abs{\Vuhat(\xi,t)} \, d \xi < \varepsilon,
  \end{equation}
  and
  \begin{equation}\label{eq:valeps}
    \abs{\uzhat(\xi)} < \varepsilon, \qquad\qquad \abs{\Vuhat(\xi,t)} <
    \varepsilon,
  \end{equation}
  for $\abs{\xi} > K_0$. Then, there are constants $L,C_1,C_2,C_3 > 0$ so that if $K
  = K_0+L$, then
  \begin{equation}\label{eq:Vuhattrunc}
    \abs{\int_{\Gamma \backslash \Gamma_{K}} e^{i \zeta x}
    \Vuhat(\zeta,t) \, d \zeta} < C_1 \varepsilon
  \end{equation}
  and
  \begin{equation}\label{eq:uhattrunc}
    \abs{\int_{\Gamma \backslash \Gamma_{K}} e^{i \zeta x}
    \uhat(\zeta,t) \, d \zeta} < e^{2 H \phimax} (C_2 + C_3 T)
    \varepsilon.
  \end{equation}
  $L$ depends only
  on $H$ and $\varepsilon$, and in particular not on $u_0$ nor on $V$.
  $C_1$ and $C_3$ depend only on $H$ and
  $\max_{0 \leq t \leq T} \norm{(Vu)(\cdot,t)}_2$, and $C_2$ depends
  only on $H$ and $\norm{u_0}_2$.
\end{theorem}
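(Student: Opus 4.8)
The plan is to reduce both estimates to Lemma~\ref{lem:fifttrunc}. The bound \eqref{eq:Vuhattrunc} is immediate: for fixed $t$, the function $f:=(Vu)(\cdot,t)$ is smooth and supported in $[-1,1]$ (since $V$ is compactly supported and $u(\cdot,t)$ is Schwartz), so $f\in C^\infty([-1,1])$; applying Lemma~\ref{lem:fifttrunc} with the trivial weight $g\equiv 1$, $B=1$, and with \eqref{eq:trunceps}, \eqref{eq:valeps} for $\Vuhat$ playing the role of \eqref{eq:ftrunceps}, \eqref{eq:fvaleps} yields \eqref{eq:Vuhattrunc} for all $x\in[-1,1]$, with $L=L(H,\varepsilon)$ and $C_1$ depending only on $H$ and $\norm{(Vu)(\cdot,t)}_2\le\max_{0\le t\le T}\norm{(Vu)(\cdot,t)}_2$.

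For \eqref{eq:uhattrunc} the obstacle is that $u(\cdot,t)$ is not compactly supported, so Lemma~\ref{lem:fifttrunc} does not apply to $\uhat(\zeta,t)$ directly. I would instead substitute the integral representation \eqref{eq:uhatzinteq} and split the tail integral as
\[
\int_{\Gamma\backslash\Gamma_K} e^{i\zeta x}\uhat(\zeta,t)\,d\zeta
  = \int_{\Gamma\backslash\Gamma_K} e^{i\zeta x} g_0(\zeta)\uzhat(\zeta)\,d\zeta
  - i\int_0^t\paren{\int_{\Gamma\backslash\Gamma_K} e^{i\zeta x} g_s(\zeta)\Vuhat(\zeta,s)\,d\zeta}\,ds,
\]
where $g_0(\zeta)=e^{-i\zeta^2 t+i\zeta\varphi(t)}$ and $g_s(\zeta)=e^{-i\zeta^2(t-s)+i\zeta(\varphi(t)-\varphi(s))}$ are the (entire) spectral Green's function factors, and the interchange of the $\zeta$- and $s$-integrations is justified by the rapid decay of $\Vuhat$. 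Each inner integral now has the form treated by Lemma~\ref{lem:fifttrunc}, with $f=u_0\in C^\infty([-1,1])$ in the first and $f=(Vu)(\cdot,s)\in C^\infty([-1,1])$ in the second, and with the analytic weights $g_0$, $g_s$. The decay hypotheses \eqref{eq:ftrunceps}, \eqref{eq:fvaleps} hold for every relevant $f$ by \eqref{eq:trunceps}, \eqref{eq:valeps}; crucially the lemma's $L$ depends only on $H$ and $\varepsilon$, so a single truncation $K=K_0+L$ serves $u_0$ and all $(Vu)(\cdot,s)$, $s\in[0,t]$, simultaneously. (We may assume $K_0>H$, enlarging it if necessary, as this only strengthens the hypotheses.)

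It remains to exhibit bounds $\abs{g_\bullet}\le B$ on $\Gamma$ — and, as used inside the proof of Lemma~\ref{lem:fifttrunc}, on the real segment and vertical connector appearing in its contour deformation. Writing $\zeta=\xi+i\eta$, the relevant region ($\Gamma_3$ side: $\eta\in[-H,0]$, $\xi\ge K>0$; $\Gamma_1$ side: $\eta\in[0,H]$, $\xi\le -K<0$; plus the real line) satisfies $\xi\eta\le 0$ and $\abs{\eta}\le H$, whence $\Re(-i\zeta^2)=2\xi\eta\le 0$, so $\abs{e^{-i\zeta^2(t-s)}}\le 1$ for $t\ge s$, while $\abs{e^{i\zeta(\varphi(t)-\varphi(s))}}=e^{-\eta(\varphi(t)-\varphi(s))}\le e^{H\abs{\varphi(t)-\varphi(s)}}$. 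Since $\abs{\varphi(t)}\le\phimax$ and $\abs{\varphi(t)-\varphi(s)}\le 2\phimax$, we may take $B=e^{H\phimax}$ for $g_0$ and $B=e^{2H\phimax}$ for $g_s$. Lemma~\ref{lem:fifttrunc} then bounds the first inner integral by $e^{H\phimax}C_2\varepsilon$ with $C_2=C_2(H,\norm{u_0}_2)$, and the second (uniformly in $s$ and in $x\in[-1,1]$) by $e^{2H\phimax}C_3\varepsilon$ with $C_3=C_3\paren{H,\max_{0\le t\le T}\norm{(Vu)(\cdot,t)}_2}$; integrating the latter over $s\in[0,t]\subseteq[0,T]$ contributes the factor $T$, and combining the two pieces (absorbing $e^{H\phimax}\le e^{2H\phimax}$) gives \eqref{eq:uhattrunc}. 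The one step I would check most carefully is that the weight $g_s$ really stays bounded throughout the full deformation region used in Lemma~\ref{lem:fifttrunc} — i.e.\ the sign condition $\xi\eta\le 0$ together with $t\ge s$ — and the attendant doubling of the exponent of $e^{H(\cdot)}$, which arises because the advective excursion $\varphi(t)-\varphi(s)$ can be as large as $2\phimax$.
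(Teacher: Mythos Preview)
Your proposal is correct and follows essentially the same route as the paper: apply Lemma~\ref{lem:fifttrunc} with $g\equiv 1$ for \eqref{eq:Vuhattrunc}, then substitute \eqref{eq:uhatzinteq}, split into the $u_0$ and history terms, and apply the lemma to each with the spectral Green's function as weight. Your observation that the bound $|g|\le B$ is actually needed on the full deformation region (real tail and vertical connector), not just on $\Gamma$, and your verification via $\xi\eta\le 0$, is a point of extra care beyond what the paper spells out; otherwise the arguments coincide, including the final absorption $e^{H\phimax}\le e^{2H\phimax}$.
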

\begin{proof}
  \eqref{eq:Vuhattrunc} follows immediately from Lemma
  \ref{lem:fifttrunc} by taking $f(x) =
  (Vu)(x,t)$ for fixed $t$ and $g = 1$, and then maximizing the
  resulting bound over $t \in [0,T]$. This last step relies on the observation
  from the proof of Lemma \ref{lem:fifttrunc} that,
  if $f(x)$ is replaced by $f(x,t)$ with continuous dependence on $t$,
  then the dependence of the constant $C$ on $t$ is continuous.
  
  To prove
  \eqref{eq:uhattrunc}, we first assume $A = 0$ and use \eqref{eq:uhatzinteq} 
to obtain: 
  \[\int_{\Gamma \backslash \Gamma_{K}} e^{i \zeta x} \uhat(\zeta,t) \, d \zeta =
    \int_{\Gamma \backslash \Gamma_{K}} e^{i \zeta x} e^{-i \zeta^2 t}
      \wh{u}_0(\zeta) \, d \zeta - i \int_0^t \int_{\Gamma \backslash \Gamma_{K}} e^{i \zeta x} e^{-i \zeta^2
  (t-s)} \wh{(Vu)}(\zeta,s) \, d \zeta \, ds.\]
  To bound the first term on the right hand side, we fix $t$ and use Lemma
  \ref{lem:fifttrunc} with $f = u_0$ and $g(\zeta) = e^{-i \zeta^2 t}$,
  which satisfies $|g| \leq 1$ on $\Gamma$.
  We obtain
  \[\abs{\int_{\Gamma \backslash \Gamma_{K}} e^{i \zeta x} e^{-i \zeta^2 t}
  \wh{u}_0(\zeta) \, d \zeta} \leq C_2 \varepsilon\]
  where $C_2$ depends only on $H$ and $\norm{u_0}_2$. To bound the
  second term, we write
  \[\abs{\int_0^t \int_{\Gamma \backslash \Gamma_{K}} e^{i \zeta x} e^{-i \zeta^2
    (t-s)} \wh{(Vu)}(\zeta,s) \, d \zeta \, ds} \leq \int_0^t \abs{\int_{\Gamma \backslash \Gamma_{K}} e^{i \zeta x} e^{-i \zeta^2
    (t-s)} \wh{(Vu)}(\zeta,s) \, d \zeta} \, ds.\]
  Fixing $t$, 
  we may use Lemma \ref{lem:fifttrunc}
  with $f(x) = (Vu)(x,s)$ and $g(\zeta) = e^{-i \zeta^2 (t-s)}$ for 
  each $s$ in the inner integral to obtain
  \[\abs{\int_0^t \int_{\Gamma \backslash \Gamma_{K}} e^{i \zeta x} e^{-i \zeta^2
  (t-s)} \wh{(Vu)}(\zeta,s) \, d \zeta \, ds} \leq C_3 T \varepsilon\]
  where $C_3$ depends only on $H$ and $\max_{0 \leq t \leq T}
  \norm{(Vu)(\cdot,t)}_2$. Here we have performed the same maximization
  over $t$ as before.
  \eqref{eq:uhattrunc} follows for $\phimax = 0$ by combining these estimates in the
  triangle inequality.
  If $A \neq 0$, we again use \eqref{eq:uhatzinteq} and write
  \[\int_{\Gamma \backslash \Gamma_{K}} e^{i \zeta x} \uhat(\zeta,t) \, d \zeta =
    \int_{\Gamma \backslash \Gamma_{K}} e^{i \zeta x} e^{-i \zeta^2
    t + i \zeta \varphi(t)} \wh{u}_0(\zeta) \, d \zeta - i \int_0^t \int_{\Gamma \backslash \Gamma_{K}} e^{i \zeta x} e^{-i \zeta^2
  (t-s) + i \zeta (\varphi(t) - \varphi(s))} \wh{(Vu)}(\zeta,s) \, d \zeta \, ds.\]
  The rest of the argument is almost identical, except that we take $g(\zeta)
  = e^{-i \zeta^2 t + i \zeta \varphi(t)}$ for the first term and $g(\zeta)
  = e^{-i \zeta^2 (t-s) + i \zeta (\varphi(t)-\varphi(s))}$ for the
  second. These both satisfy the bound $|g| \leq e^{2 H \phimax}$. The
  final bounds therefore include this factor.
\end{proof}

We note that it is crucial that $L$ is independent of the data $u_0$ and
$V$ in the proofs above,
since this implies that
at fixed $\varepsilon$ and $H$, $L$ does not grow
with the frequency cutoff $K_0$.
At fixed $\varepsilon$ and $H$, we thus have $K = K_0 + \OO{1}$.
The growth of $L$ as $\varepsilon\to 0$ is weak, since
$\wh{\phi}_\eta$ in the proof of Lemma~\ref{lem:fhatcontinuation}
decays superalgebraically.

\subsection{Analysis of resolution}\label{sec:gammares}

Assuming that the complex Fourier representations \eqref{eq:ucift}
and \eqref{eq:Vucift} have been truncated as
\begin{equation}\label{eq:uciftK}
  u(x,t) \approx \frac{1}{2 \pi} \int_{\Gamma_K} e^{i \zeta x} \uhat(\zeta,t) \, d
\zeta
\end{equation}
and
\begin{equation}\label{eq:VuciftK}
  (Vu)(x,t) \approx \frac{1}{2 \pi} \int_{\Gamma_K} e^{i \zeta x} \Vuhat(\zeta,t)
\, d \zeta,
\end{equation}
we now determine the grid spacing required to resolve the
integrands for all $x \in [-1,1]$ and $t \in [0,T]$. We will provide an
argument analyzing the scaling of the quadrature parameters $\NE$, $q$,
and $\nr$ which demonstrates that 
{\em the number of quadrature nodes required on $\Gamma_K$ is of the order
\[\OO{(1+\phimax) K + \log T} = \OO{(1+\phimax) M + \log T}.\]}
As noted in Remark \ref{rem:Hchoice}, the required
grid spacing will depend on $H$, so we will have to choose this parameter carefully.
We focus on \eqref{eq:uciftK}, since it requires strictly stronger
accuracy constraints than \eqref{eq:VuciftK}.

The integrand may be understood by substituting
\eqref{eq:uhatzinteq} into \eqref{eq:uciftK},
\begin{equation} \label{eq:inteqHconstraint}
  \int_{\Gamma_K} e^{i \zeta x} \uhat(\zeta,t) \, d
\zeta = \int_{\Gamma_K} e^{-i \zeta^2 t + i
\zeta (x + \varphi(t))} \wh{u}_0(\zeta) \, d \zeta - i \int_0^t \int_{\Gamma_K} e^{-i \zeta^2
  (t-s) + i \zeta (x + \varphi(t) - \varphi(s))}
\wh{(Vu)}(\zeta,s) \, ds \, d \zeta,
\end{equation}
and analyzing the integrands of the two resulting terms. We focus on the
second since it requires slightly more stringent parameter choices, but
the analysis is similar for both. We abbreviate the integrand as
$g(\zeta,x,t,s) \Vuhat(\zeta,s)$, with
\[g(\zeta,x,t,s) = e^{-i \zeta^2 (t-s) + i \zeta (x + \varphi(t) -
\varphi(s))}.\]

We first derive a constraint on $H$ by examining the magnitude of
$g(\zeta,x,t,s) \Vuhat(\zeta,s)$. For $\zeta \in \Gamma$, we have
\begin{align*}
  \abs{g(\zeta,x,t,s) \Vuhat(\zeta,s)} &= e^{2 \Re(\zeta) \Im(\zeta) (t-s) - \Im(\zeta)(x +
\varphi(t) - \varphi(s))} \abs{\Vuhat}(\zeta,s) \\
&\leq e^{H (1+2\phimax)} \abs{\Vuhat}(\zeta,s) \leq e^{2 H (1+\phimax)}  \norm{V}_{2,\infty} \\
\end{align*}
where $\norm{V}_{2,\infty} = \max_{t \in
[0,T]} \norm{V(\cdot,t)}_2$. For the first inequality, we used that $\Re(\zeta) \Im(\zeta) \leq 0$ on $\Gamma$ and
$\abs{x} \leq 1$. For the second, we used the estimate
\[\abs{\Vuhat}(\zeta,s) = \abs{\int_{-1}^1 e^{-i \zeta x} (Vu)(x,s) \,
  dx} \leq e^H \int_{-1}^1 \abs{(Vu)(x,s)} \, dx \leq e^H \max_{t \in
[0,T]} \norm{V(\cdot,t)}_2\]
for $\zeta \in \Gamma$, which follows from the Cauchy-Schwarz inequality and the
fact that $\norm{u(\cdot,t)}_2 = \norm{u_0}_2 = 1$.
A large choice of $H$ may therefore lead to a loss of accuracy in
floating point arithmetic due to large-magnitude oscillations of the
integrand in \eqref{eq:uciftK}. To maintain a relative accuracy
$\varepsilon$, we require
\[e^{2 H (1+\phimax)}  \norm{V}_{2,\infty} \leq \varepsilon/\epsm,\]
where $\epsm$ is the machine epsilon. This implies the constraint
\[H \leq \frac{\log\paren{\varepsilon/\paren{\norm{V}_{2,\infty}
\epsm}}}{2(1 +
\phimax)}.\]
For dimension $d$, a similar argument gives
\[H \leq \frac{\log\paren{\varepsilon/\paren{\norm{V}_{2,\infty}
  \epsm}}}{2d(1 + \phimax)}\]
in each dimension. If $V = 0$, then we must analyze the first integral on 
the right hand side of
\eqref{eq:inteqHconstraint}, from which we obtain a similar but slightly weaker
constraint. The inequality
\[H \leq \frac{\log\paren{\varepsilon/\paren{\paren{1+\norm{V}_{2,\infty}}
  \epsm}}}{2d(1 + \phimax)}\]
covers both cases.

$\Vuhat(\zeta,s)$ is well-resolved by a grid with $\OO{1}$ spacing on
$\Gamma$, so we focus on the behavior of $g(\zeta,x,t,s)$. On $\Gamma_3$, we have $\zeta = \tau - iH$ with $\tau \in
[H,K]$, so
\begin{align*}
\begin{aligned}
g(\gamma(\tau),x,t,s) &= e^{-i (\tau - iH)^2 (t-s) + i (\tau - iH) (x +
  \varphi(t) - \varphi(s))} \\
&= e^{-i (\tau^2 - H^2) (t-s) + i \tau (x +
  \varphi(t) - \varphi(s))}
e^{- 2 \tau H (t-s) + H (x + \varphi(t) - \varphi(s))}.
\end{aligned}
\end{align*}
This function decays exponentially in $\tau$, and to achieve an 
accuracy of $\varepsilon$ in integration, we must resolve the oscillatory factor only for $\tau \in
\brak{H,\min\paren{K,\frac{\log(1/\epsm)}{2 H (t-s)}}}$.
For $t-s \leq
\frac{\log(1/\epsm)}{2 H K}$, this becomes $\tau \in
[H,K]$. We can estimate the required grid spacing by computing the
magnitude of the derivative of the
oscillatory factor:
\begin{align*}
\begin{aligned}
\abs{\frac{d}{d\tau} e^{-i (\tau^2 - H^2) (t-s) + i \tau (x +
  \varphi(t) - \varphi(s))}}
  &= \abs{2 \tau (t-s) - (x + \varphi(t) - \varphi(s))} \leq 2K (t-s) + 1
  + 2 \phimax \\
&\leq
\frac{\log(1/\epsm)}{H} + 1 + 2 \phimax.
\end{aligned}
\end{align*}
For $t-s > \frac{\log(1/\epsm)}{2 H K}$, we have $\tau \in
\brak{H,\frac{\log(1/\epsm)}{2 H (t-s)}}$, and obtain the same
estimate:
\[\abs{\frac{d}{d\tau} e^{-i (\tau^2 - H^2) (t-s) + i \tau (x +
  \varphi(t) - \varphi(s))}} = \abs{2 \tau (t-s) - (x + \varphi(t) -
  \varphi(s))} \leq
\frac{\log(1/\epsm)}{H} + 1 + 2 \phimax.\]
The grid spacing required to achieve minimal resolution may be estimated as the reciprocal of this
value. This suggests taking $H$ to be as large as possible, within the
constraints imposed by our floating point accuracy considerations, in order to obtain a
coarsest possible grid spacing. Thus, we set
\[H = \frac{\log\paren{\varepsilon/\paren{\paren{1+\norm{V}_{2,\infty}}
  \epsm}}}{2d(1 + \phimax)}.\]
Our estimate of the required grid spacing is then
\[\Delta \tau = \paren{2 (1 + \phimax) \brak{
      \frac{d
      \log(1/\epsm)}{\log\paren{\varepsilon/\paren{\paren{1+\norm{V}_{2,\infty}}
  \epsm}}} + 1} - 1}^{-1} =
\OO{(1+\phimax)^{-1}},\]
which notably does not scale with $K$. Taking uniformly spaced
nodes, we obtain $\NE = \OO{(1 + \phimax) K}$ points on $\Gamma_3$. The
analysis for $\Gamma_1$ is nearly identical.

On $\Gamma_2$, we have $\zeta = \tau - i \tau$ with $\tau \in [-H,H]$,
so
\[g(\gamma(\tau);x,t,s) = e^{-i \tau^2 (1-i)^2 (t-s) + i (1-i) \tau (x +
  \varphi(t) - \varphi(s))} = e^{- 2 \tau^2 (t-s)} e^{(1+i) \tau (x +
\varphi(t) - \varphi(s))}.\]
Since $\abs{\tau (x + \varphi(t) - \varphi(s))} \leq H (1 + 2 \phimax)
\leq \log\paren{\varepsilon/\paren{\paren{1+\norm{V}_{2,\infty}}
  \epsm}}/d$ for $\tau \in [-H,H]$, the second factor
may be resolved by a grid with spacing independent of $K$, $\phimax$,
and $T$. The first factor is a Gaussian of width $\frac{1}{2 \sqrt{t-s}}$,
and may be resolved for all $s
\in [0,T]$ by a composite Gauss quadrature rule with $\nr = \OO{\log T}$ panels
of uniform order $q$, dyadically refined toward the origin.

\section{Numerical results} \label{sec:results}

We illustrate the performance of the periodic and free space methods on 
a collection of model problems. In addition, for the free space method, we
carry out several experiments which demonstrate the convergence behavior of the
quadrature rule on $\Gamma$ with respect to the relevant quadrature
parameters. All codes were written in MATLAB, which invokes the FFTW library
\cite{frigo05}. Experiments were performed on a laptop with an
Intel Xeon E-2176M 2.70GHz processor.

We define the time-dependent $L^2$ error over the computational domain,
measured against a reference solution $u_{\text{ref}}$, as
\begin{equation} \label{eq:l2err}
  \err(t) = \sqrt{\int_{-L}^L \abs{u(x,t)-u_{\text{ref}}(x,t)}^2 \, dx},
\end{equation}
and the maximum $L^2$ error as
\begin{equation} \label{eq:maxl2err}
  \err_{\max} = \max_{t \in [0,T]} \err(t).
\end{equation}
Here $L = \pi$ for the periodic case and $L = 1$ for the free space
case. The reference solution $u_{\text{ref}}$ will be specified in each
experiment. We approximate 
\eqref{eq:l2err} using the left endpoint rule on the
computational grid.

We define a pulse vector potential $A(t)$, 
given in one dimension by 
\begin{equation} \label{eq:apulse}
  A(t) = A_0 \sin^2(t \pi/T) \cos(\omega t),
\end{equation}
where $A_0$ is an amplitude parameter and $\omega$ is a frequency
parameter.
In two dimensions, we will take $A(t) = (A_1(t),0)^T,$ where $A_1$ has the
form \eqref{eq:apulse}. This form of the vector potential will be used
in several of our experiments.

\subsection{Example 1: moving periodic Gaussian well potential in 1D}

Our first numerical example takes $V(x,t)$ to be 
the periodic extension of a one-dimensional Gaussian
well moving with constant speed $c$:
\[V(x,t) = \sum_{k=-\infty}^\infty -V_0 e^{-\frac{(x - 2 \pi k - ct)^2}{2 \beta^2}}.\]
We take $V_0 = 300$ and $\beta = 0.2$. For simplicity, we set $A = 0$,
and take $u_0$ to be the $L^2$-normalized ground state of the time-independent
\Schrod equation with potential $V$, computed to approximately 11 digits
of accuracy using the \texttt{eigs}
function of the Chebfun software package \cite{driscoll08}. 
The ground state eigenvalue is approximately $-243$. We use three
different values of the speed, $c = 15$, $c = 30$, and $c
= 45$, and a final time $T = 2 \pi / 15$. Plots of the three solutions are given in Figure
\ref{fig:persolnplots}. At the slowest speed, the solution remains
largely bound by the potential, although it oscillates somewhat within the
potential well. For the fastest speed, most of the mass of the
wavefunction falls out of the well, and quickly spreads
out over the domain.

\begin{figure}[t]
  \centering
    \includegraphics[width=\linewidth]{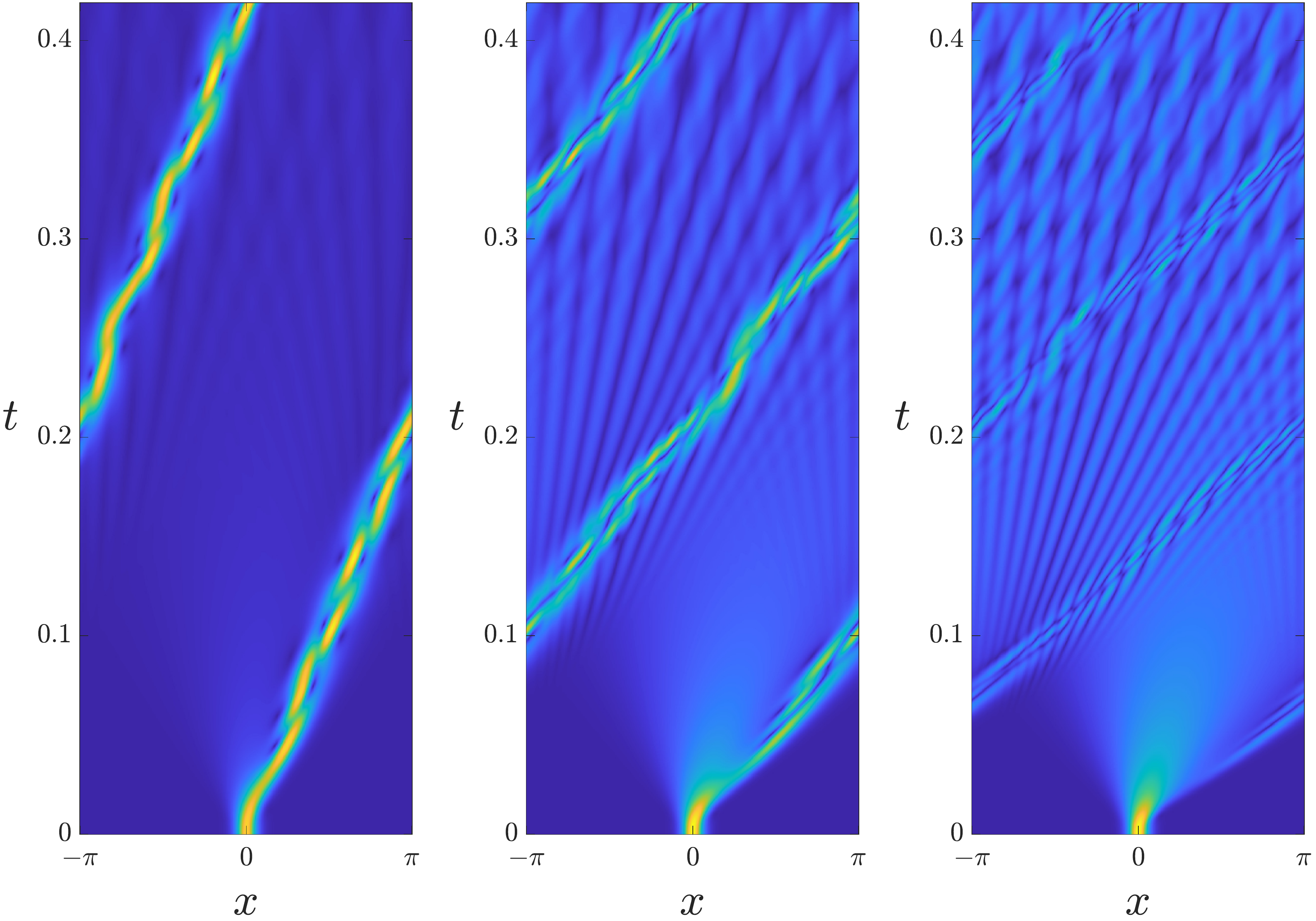}
    \caption{In Example 1, a periodic Gaussian well potential moves with
    constant speed $c$, carrying along a solution $u$ which is initialized
    in the ground state of the stationary potential. Plots of
    $\abs{u(x,t)}$ are given in the unit cell $[-\pi,\pi]$ for $c = 15$
    (left), $c = 30$ (middle), and $c = 45$ (right).}
    \label{fig:persolnplots}
\end{figure}

We solve the equations for various choices of $M$ and values of $\Delta
t$ corresponding to 200, 400, 800, \ldots, 25600 time steps, using the
eighth-order version of the
implicit multistep scheme described in
Section \ref{sec:highorder}. We measure the final time errors $\err(T)$
using a reference solution $u_{\text{ref}}$ obtained by increasing 
$M$ and decreasing $\Delta t$ to self-consistent convergence beyond 12
digits of accuracy. The results
are presented in Figure \ref{fig:pererrvdt}. We
observe the expected eighth-order convergence with respect to $\Delta t$
and spectral convergence with respect to $M$. 

\begin{figure}[t]
  \centering
    \includegraphics[width=\linewidth]{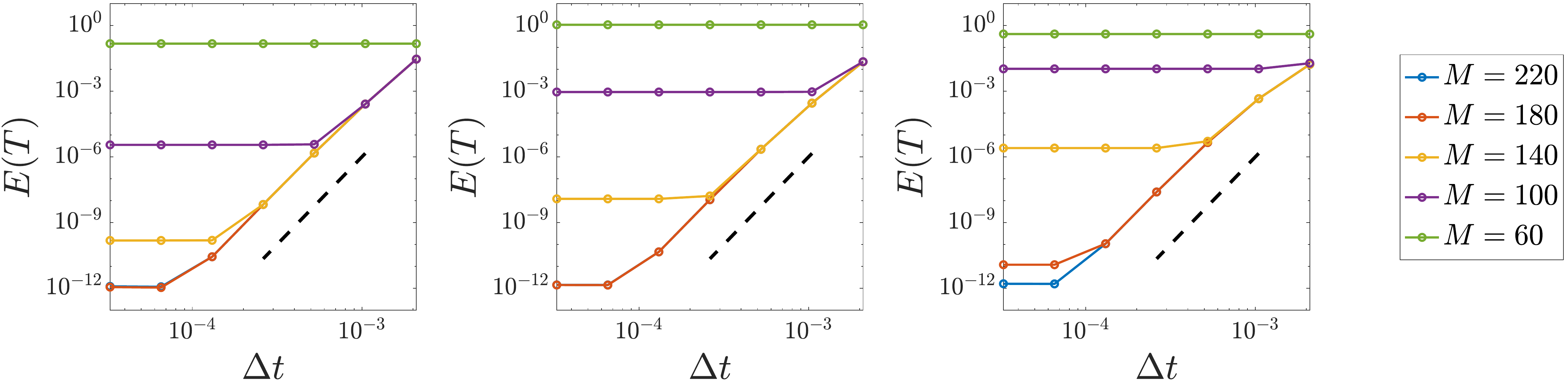}
    \caption{Final time $L^2$ error of $u(x,t)$ against $\Delta t$ for several values of
      $M$ and $c =$ $15$ (left), $30$ (middle), and $45$ (right) for
      Example 1. Eighth-order convergence is indicated by the black
  dashed lines.}
    \label{fig:pererrvdt}
\end{figure}

\subsection{Example 2: convergence of the $\Gamma$ quadrature}

For the free space problem, the accuracy parameters at our disposal in the
one-dimensional case are:
\begin{itemize}
  \item the numerical tolerance $\varepsilon$
  \item the number $M$ of grid points on $[-1,1]$
  \item the Alpert quadrature order parameter $p$
  \item the number $\NE$ of equispaced points in the Alpert quadrature,
    which sets the regular grid spacing $h$
  \item the Gaussian quadrature order parameter $q$
  \item the dyadic refinement depth $\nr$
\end{itemize}
In the $d$-dimensional case, except for $\varepsilon$, there is one such parameter for each
dimension. We fix $p = 8$ in every dimension, so that the Alpert quadrature rule is
$16$th-order accurate. $K = \frac{\pi}{2} M + H$ and $H =
\frac{\log\paren{\varepsilon/\paren{\paren{1+\norm{V}_{2,\infty}}
\textbf{u}}}}{2d(1 + \phimax)}$ are also fixed in every dimension.

We examine the convergence of the quadrature on $\Gamma$ with
respect to $M$, $\NE$, and $\nr$. We demonstrate numerically the claim
that a fixed accuracy is achieved by taking $\NE = \OO{M (1 + \phimax)}$
and $\nr = \OO{\log T}$. For all experiments we fix $\varepsilon = 10^{-14}$. Since
the $d$-dimensional quadratures are tensor products of the
one-dimensional quadratures, it is sufficient to work in one dimension.

We test the following Gaussian wavepacket solution of
\eqref{eq:schrodfree} for $d = 1$, $V = 0$, and $A = 0$:
\begin{equation} \label{eq:wavepacket}
  u_{\text{wp}}(x,t) = \frac{\sigma \sqrt{\sigma}}{\pi^{1/4}\sqrt{\sigma^2 + 2 i t}}
   \exp\paren{-\frac{\paren{x/\sqrt{2} - i \sigma k_0 /
     2}^2}{\sigma^2 + 2 i t } - k_0^2/4}.
\end{equation}
Here $\sigma$ is a width parameter and $k_0$ is a frequency parameter. We fix $k_0 = 0$ for all the
experiments in this section.

When $V = 0$, our
method simply amounts to applying the propagator in the frequency 
domain to complex-frequency modes and then transforming back to physical
space. In particular, there is no time discretization error, only
truncation and quadrature errors. We can therefore measure these errors with respect to 
the various quadrature
parameters by taking $V = 0$, $u_0 = u_{\text{wp}}(x,0)$, and computing the maximum $L^2$ error \eqref{eq:maxl2err} with $u_{\text{ref}} =
u_{\text{wp}}$. In all experiments, each quadrature parameter aside from
the one being varied is refined until convergence to about fifteen digits of accuracy.

The truncation error is determined by $M$, which sets the
truncation radius on $\Gamma$ according to the formula $K =
\frac{\pi}{2} M + H$. The quadrature error is determined by $q$, $\nr$,
and $h$, the last of which
is related to $\NE$ by the formula
\[h = \frac{K-H}{\NE + 2 \kappa - 1} = \frac{\pi M}{2(\NE + 13)}.\]
Here we have used that $\kappa = 7$ for $p = 8$.

In addition to showing typical convergence rates with respect to $M$ and
$h$, our first two experiments show that, consistent with our analysis, the quiver radius $\phimax$
does not significantly affect the choice of $M$ required to achieve a
given error, but does affect $h$ approximately as $h \sim 1/(1 +
\phimax)$. We fix $T = 0.1$ and $\sigma = 0.1$ in
\eqref{eq:wavepacket}. We take $A(t)$ given by
\eqref{eq:apulse} with $\omega = 500$, yielding pulses of a few cycles, and use four different field
amplitudes: $A_0 = 0$, $500$, $1500$, and $3500$. These correspond to
the quiver radii $\phimax = 0$, $\approx 1$, $\approx 3$, and $\approx
5$, respectively. Figure \ref{fig:Mconva} shows $\err_{\max}$ as $M$ is
varied for each choice of $A_0$. The
convergence of the quadrature with respect to $M$ is superexponential,
as expected since $\uhat$ is entire. The truncation radius required to
achieve a given error is not significantly affected by $A_0$.
Next, Figure \ref{fig:hconva} shows $\err_{\max}$ as $h$ is varied for
each choice of $A_0$.
The convergence with respect to $h$ is approximately
$16^{\text{th}}$-order. Furthermore, as $1+\phimax$ doubles from $2$
to $4$ and from $4$ to $8$, the grid spacing required to achieve a given error
approximately halves, consistent with the expectation $h \sim 1/(1 +
\phimax)$.

\begin{figure}[t]
  \centering
  \begin{subfigure}[t]{0.47\textwidth}
    \includegraphics[width=\textwidth]{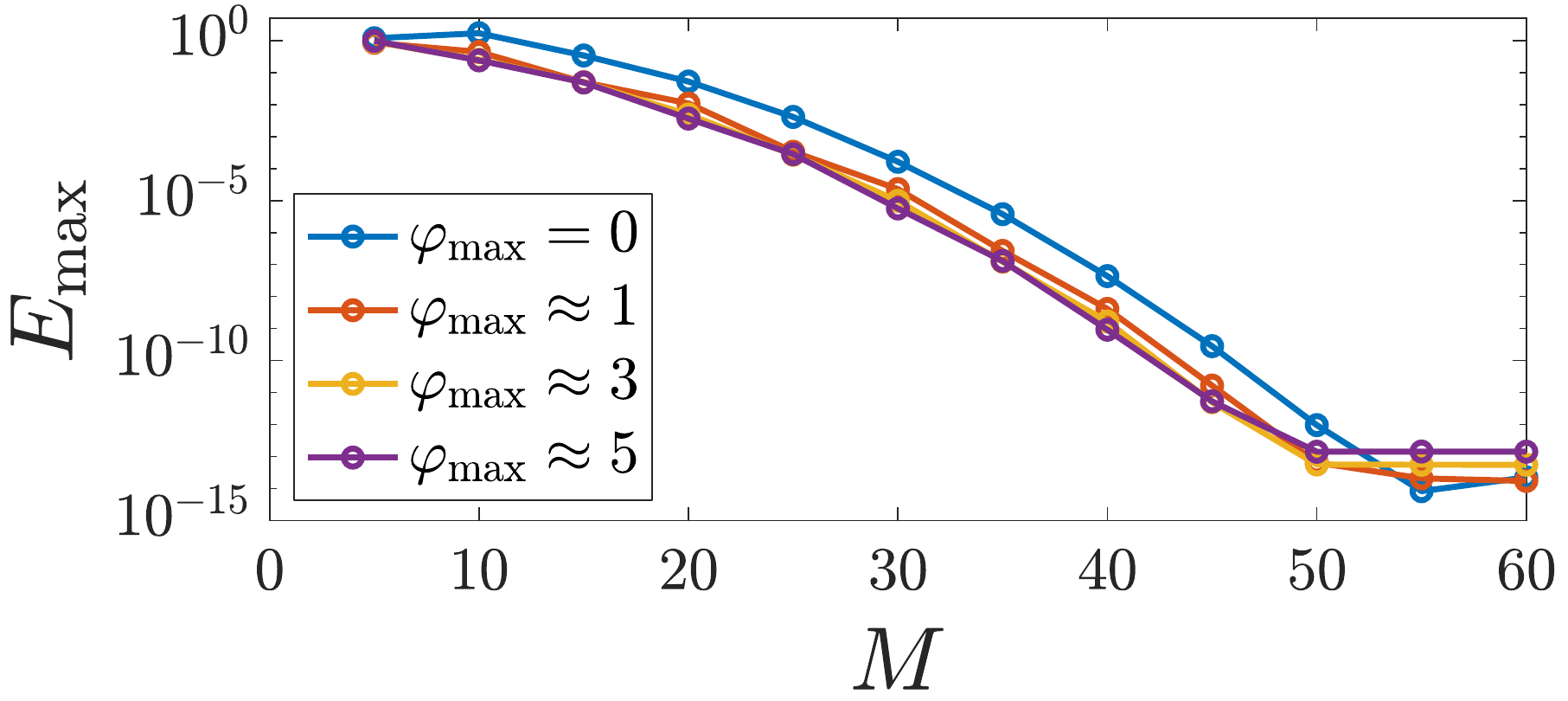}
    \caption{Convergence with respect to $M$ for different field
    strengths}
    \label{fig:Mconva}
  \end{subfigure}
  \hspace{0.5cm}
  \begin{subfigure}[t]{0.47\textwidth}
    \includegraphics[width=\textwidth]{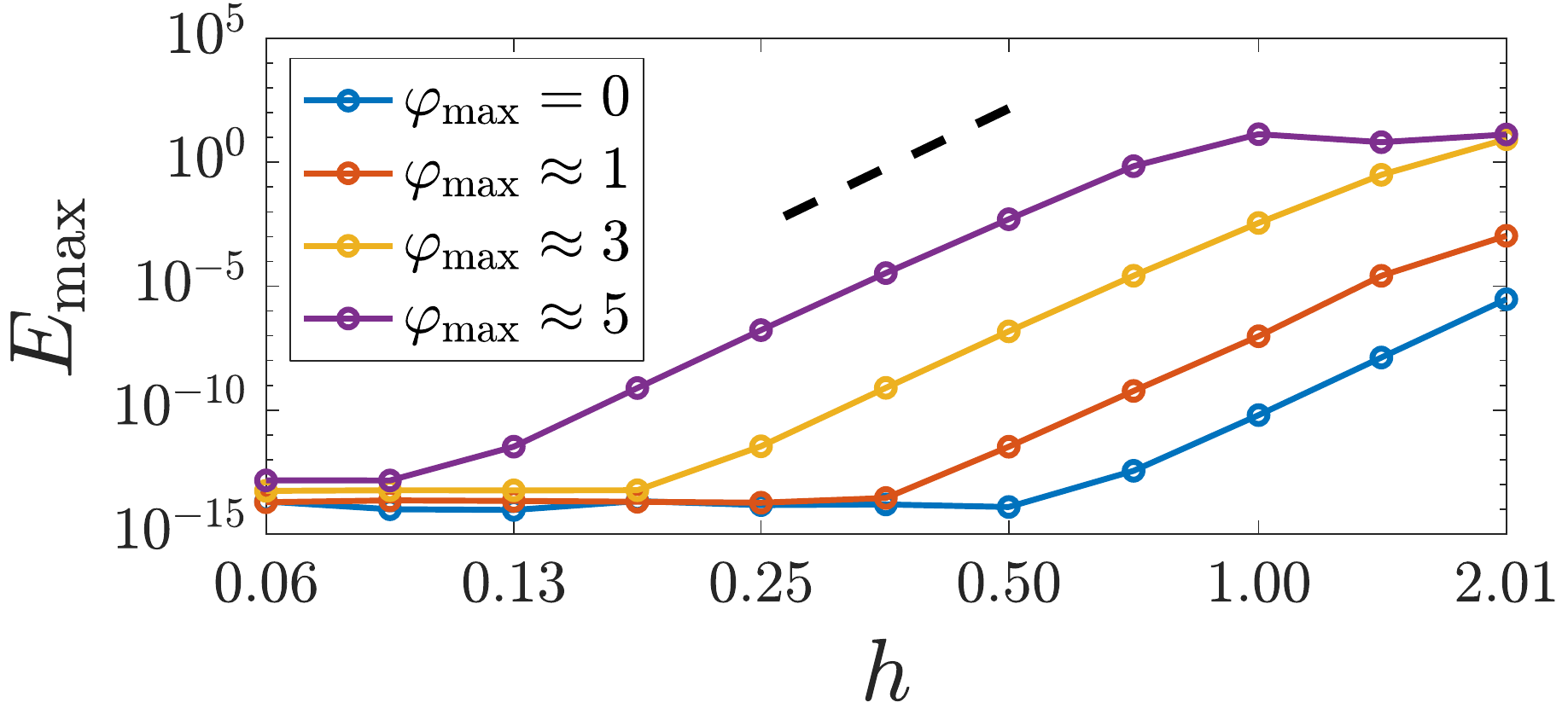}
    \caption{Convergence with respect to $h$ for different field
      strengths}
    \label{fig:hconva}
  \end{subfigure}

  \vspace{0.3cm}
  
  \begin{subfigure}[t]{0.47\textwidth}
    \includegraphics[width=\textwidth]{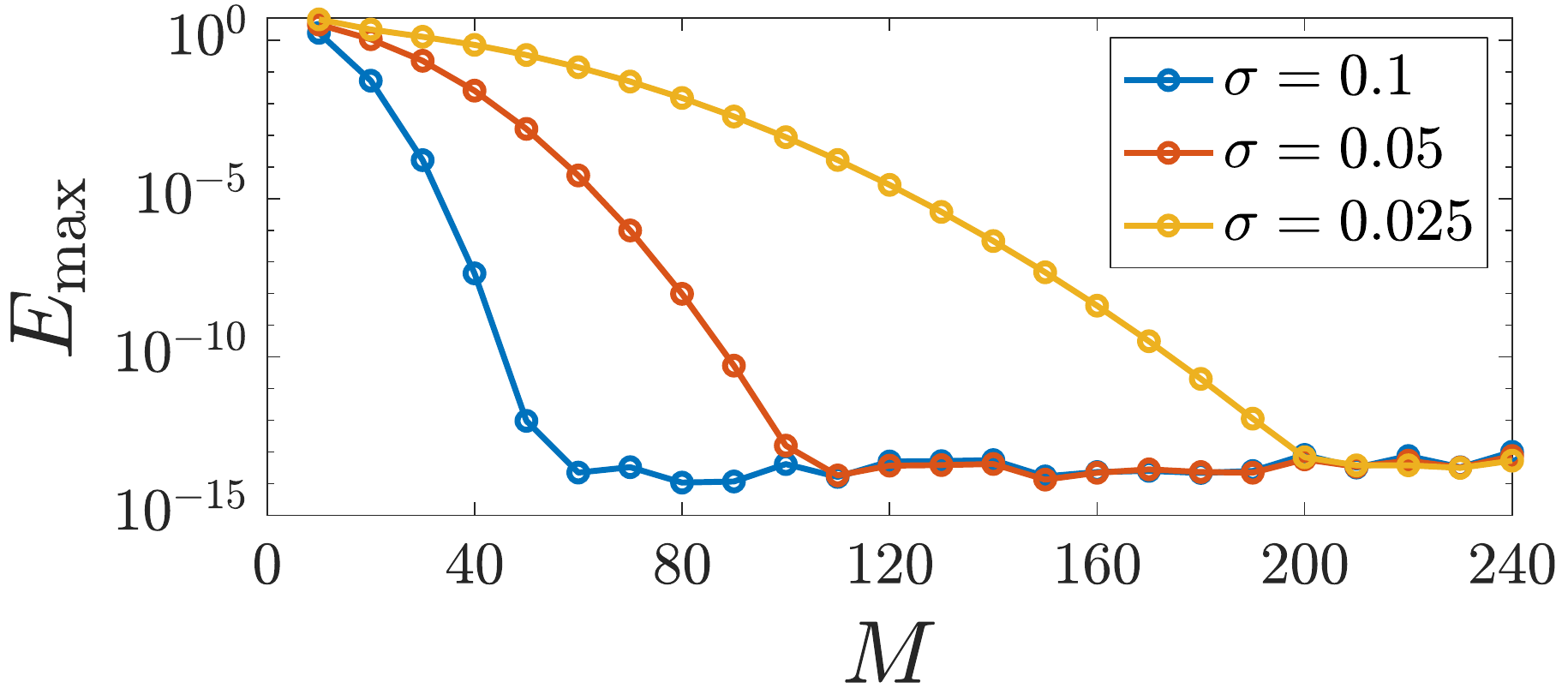}
    \caption{Convergence with respect to $M$ for different initial
    conditions}
    \label{fig:mconvsig}
  \end{subfigure}
  \hspace{0.5cm}
  \begin{subfigure}[t]{0.47\textwidth}
    \includegraphics[width=\textwidth]{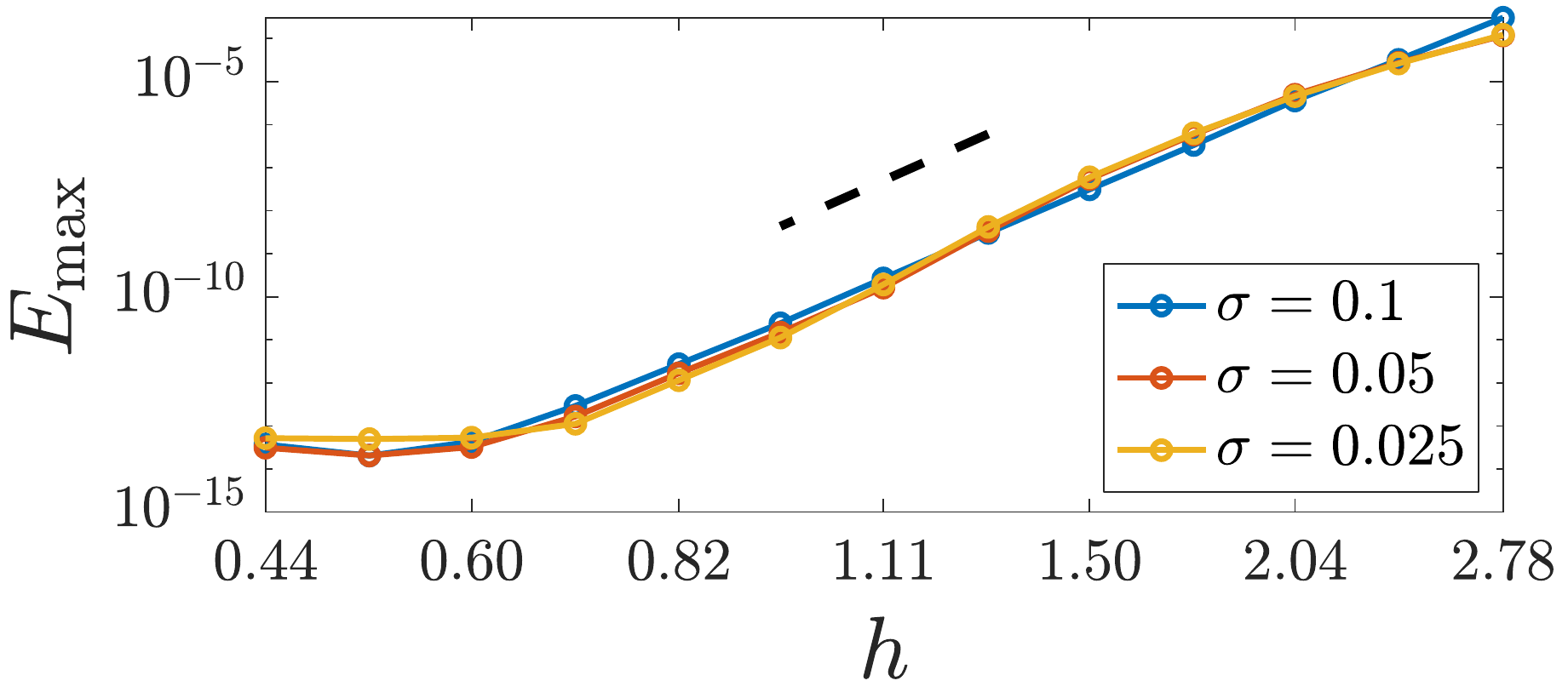}
    \caption{Convergence with respect to $h$ for different initial
    conditions}
    \label{fig:hconvsig}
  \end{subfigure}

    \caption{Convergence of the $\Gamma$ truncation and quadrature error with respect
      to $M$ and $h$, respectively, for the Gaussian wavepacket
      solution of Example 2. $M$ scales with the
    frequency cutoff of the solution but not with the field strength,
  and $h$ scales with the field strength but not with the frequency
  cutoff. For (b) and (d), the black dashed line indicates
$16^{\text{th}}$-order convergence.}
    \label{fig:convergence}
\end{figure}

In the next two experiments, we let $A = 0$, and adjust the numerical
support of the solution in the frequency
domain by taking three different values of $\sigma$:
$\sigma = 0.1$, $0.05$, and $0.025$. We expect that this should not
significantly affect the regular grid spacing $h$ required to achieve a
given error, but it should affect $M$.
Figure \ref{fig:mconvsig} shows $\err_{\max}$ as $M$ is varied for each
choice of $\sigma$. When
$\sigma$ is halved the numerical support of the solution in the frequency domain
increases by a factor of two, so a given error is maintained by
approximately doubling $M$. Figure \ref{fig:hconvsig} shows that the
choice of $h$ required to achieve a given error $\err_{\max}$ is
insensitive to $\sigma$.

In the final convergence experiment, we examine the error of a
long-time simulation as $\nr$ is increased. We take $T = 1000$, $\sigma = 0.1$, and
$A_0 = \omega = 1$, yielding a pulse of many cycles over the large time
interval. We fix $q = 16$ and plot $\err(t)$ for $\nr = 1,2,3,4,5$ on a log-log scale. The results are
shown in Figure \ref{fig:longtime}. As expected, for any fixed choice
of $\nr$, at some point in time the quadrature begins to lose accuracy. However,
incrementing $\nr$ increases this time by a fixed order of magnitude,
so that the scaling $\nr = \OO{\log T}$ preserves a uniform accuracy.

\begin{figure}[t]
  \centering
    \includegraphics[width=\linewidth]{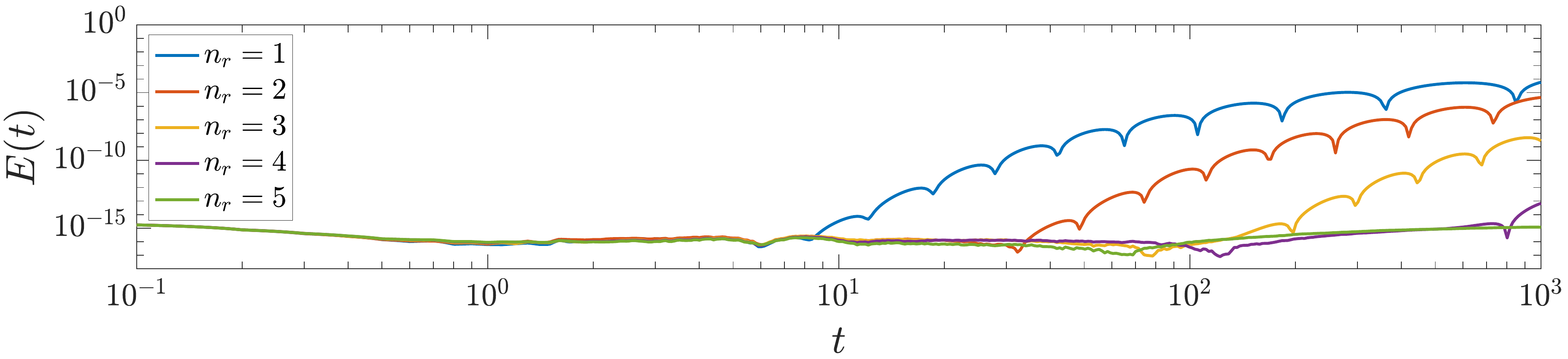}
    \caption{The $\Gamma$ quadrature error over time as $\nr$ is varied, 
    for the Gaussian wavepacket solution of Example 2.
      Incrementing $\nr$ preserves a given quadrature accuracy for an
    additional fixed order of magnitude of time.}
    \label{fig:longtime}
\end{figure}

\subsection{Example 3: ionization from a Gaussian well in 1D}

In our next example, we take the scalar potential to be a
Gaussian well,
\[V(x) = -V_0 e^{-\frac{x^2}{2 \beta^2}},\]
$u_0$ to be the $L^2$-normalized ground state of the time-independent
\Schrod equation with potential $V$, and $A$ to be a pulse
\eqref{eq:apulse}. We set $V_0 = -1400$ and $\beta = 0.1$. The
ground state $u_0$ is again computed using Chebfun's \texttt{eigs}
routine. 
The ground state eigenvalue is approximately $-1154$. $V$ is less
than $10^{-18}$ and $u_0$ is less than
$10^{-12}$ outside $[-1,1]$. We take $T =
0.5$, $A_0 = 100$ and $\omega = 50$, $100,$ and $200$, yielding quiver radii of 
$\phimax \approx 2$, $1$, and $1/2$, respectively. Plots of the three solutions and the
corresponding fields $A(t)$ are given in Figure \ref{fig:1dsolnanda}.

\begin{figure}[!htb]
  \centering

    \includegraphics[width=\linewidth]{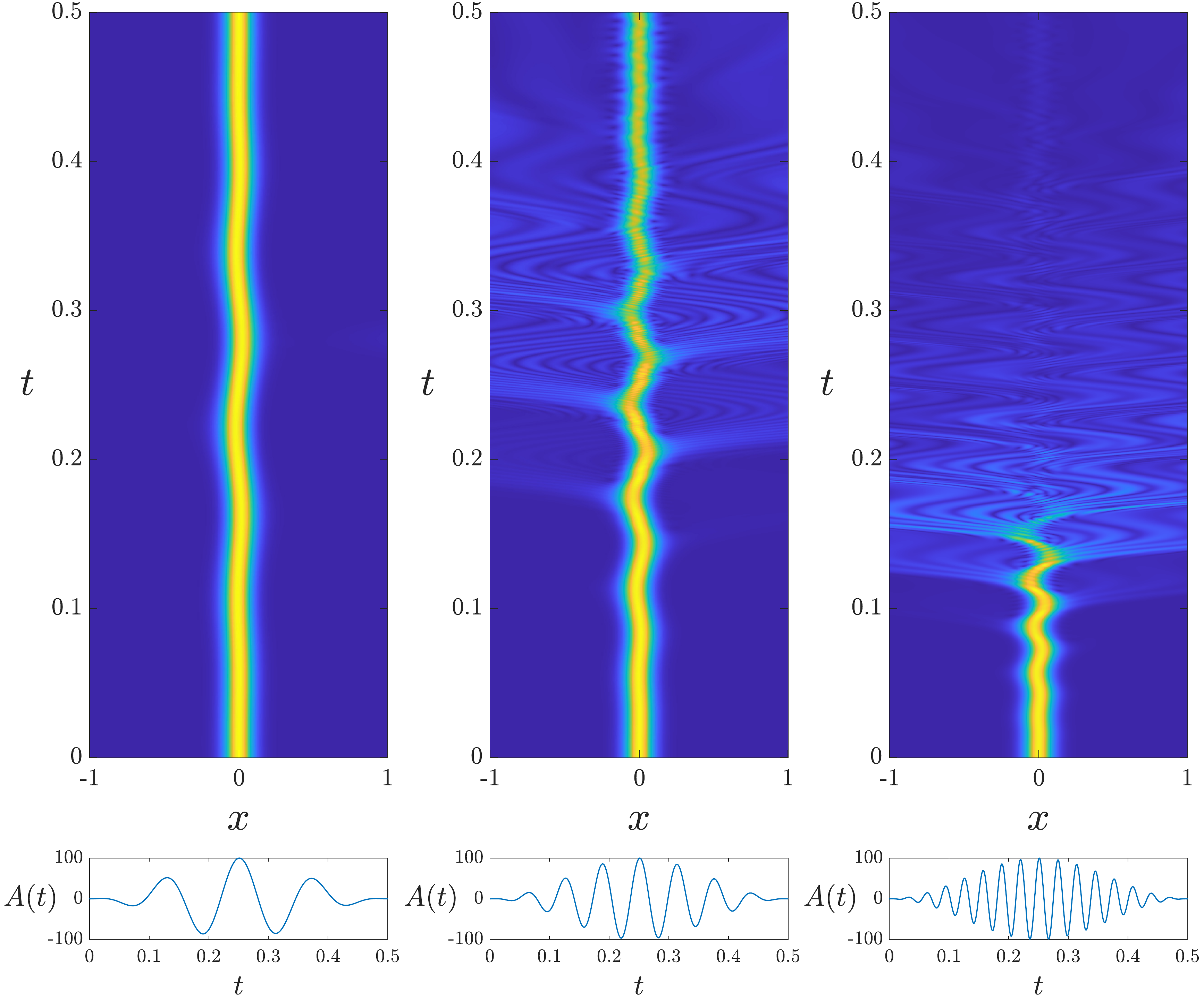}

    \caption{In Example 3, a solution $u$ initialized in the ground
    state of a Gaussian well potential is perturbed by an applied field
    $A(t)$. Plots are given of $\abs{u(x,t)}$ (above) and the corresponding potential
      $A(t)$ (below), with $\omega = 50$
    (left), $\omega = 100$ (middle), and $\omega = 200$ (right). The ionization fractions, estimated as
  $1 - \int_{-1}^1 \abs{u(x,T)}^2$, are approximately $0\%$, $40.72\%$, and
  $99.86\%$, for $\omega = 50$, $100$, and $200$, respectively.}
    \label{fig:1dsolnanda}
\end{figure}

We use the eighth-order version of the implicit multistep scheme described in
Remark \ref{rem:highorderfs}, with several approximately
logarithmically-spaced values of $h$, and values of 
$\Delta t$ corresponding to 1000, 2000, 4000, \ldots, 64000 time steps.
$\varepsilon = 10^{-10}$, $M = 100$, $q = 10$, and $\nr = 0$ are fixed. 
In the complex-frequency Fourier
transform algorithm, the $\calC$-type transforms are computed by direct
matrix multiplication rather than the Chebyshev interpolation scheme,
since the latter does not offer a speed improvement for small $\nr$. The final-time errors $\err(T)$ are plotted against
$\Delta t$ in Figure \ref{fig:1derrvdt}. The reference solution is
obtained by converging the solver to high accuracy with respect to
all parameters. We observe the expected eighth-order convergence with
$\Delta t$, and that the value of $h$ required to achieve a given accuracy
decreases as $\phimax$ increases. Timings associated with
these experiments for each choice of $h \sim 1/\NE$
are given in Table \ref{tab:1dtimes}. The scaling with $\NE$ appears to
be sublinear for these values, but this is simply because the asymptotic
regime has not yet been reached with the relatively small FFT sizes.

\begin{figure}[!htb]
  \centering

    \includegraphics[width=\linewidth]{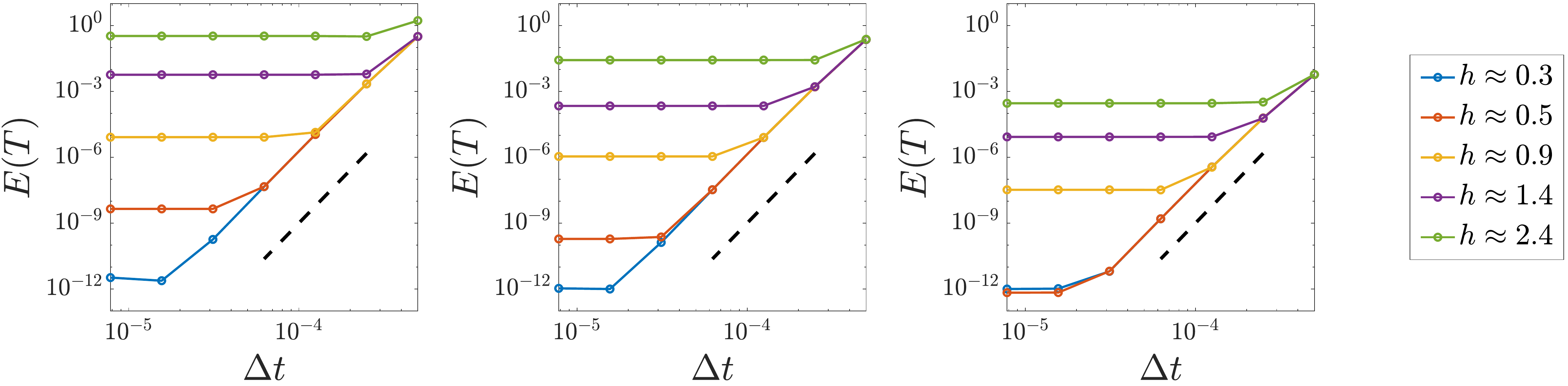}

    \caption{Final time $L^2$ error of $u(x,t)$ against $\Delta t$ for several values of
      $h$ and $\omega =$ $50$ (left), $100$ (middle), and $200$ (right) in Example 3. Eighth-order convergence is indicated by the black
  dashed lines. The minimum achievable error decreases with $h$, and the
value of $h$ required to achieve a given error decreases as $\phimax$
increases.}
    \label{fig:1derrvdt}
\end{figure}

\begin{table}
  \centering
  \begin{tabular}{|c|r|r|r|r|r|}
    \hline
    $h \approx$ & 0.3 & 0.5 & 0.9 & 1.4 & 2.4 \\ \hline
    Time steps per second & $3063$ & $3789$ & $4872$ & $6759$ & $8441$ \\
    \hline
  \end{tabular}
  \caption{Number of time steps per second for the experiments in Example 3.}
  \label{tab:1dtimes}
\end{table}

\subsection{Example 4: ionization from a Gaussian well in 2D}

We next examine the two-dimensional analogue of the previous example. We
use the scalar potential
\[V(x,y) = -V_0 e^{-\frac{x^2+y^2}{2 \beta^2}}\]
with $V_0 = 1400$ and $\beta = 0.1$, and again take $u_0$ to
be the normalized ground state of the corresponding time-independent
\Schrod equation. The ground state may be computed by working in polar
coordinates and solving the resulting one-dimensional eigenvalue problem
using Chebfun's \texttt{eigs} routine. It
is less than $10^{-11}$ outside of $[-1,1]^2$. The ground state eigenvalue is approximately $-922$. We take $T = 0.5$ as before,
and $A(t) = (A_1(t),0)^T$ with $A_1(t)$ as in the previous experiment
with the same choices of $A_0$ and $\omega$. Plots of the
solution with $\omega = 100$ at various time steps are given in Figure \ref{fig:2dsoln}.

\begin{figure}[!htb]
  \centering
  \begin{subfigure}[t]{0.45\textwidth}
    \includegraphics[width=\textwidth]{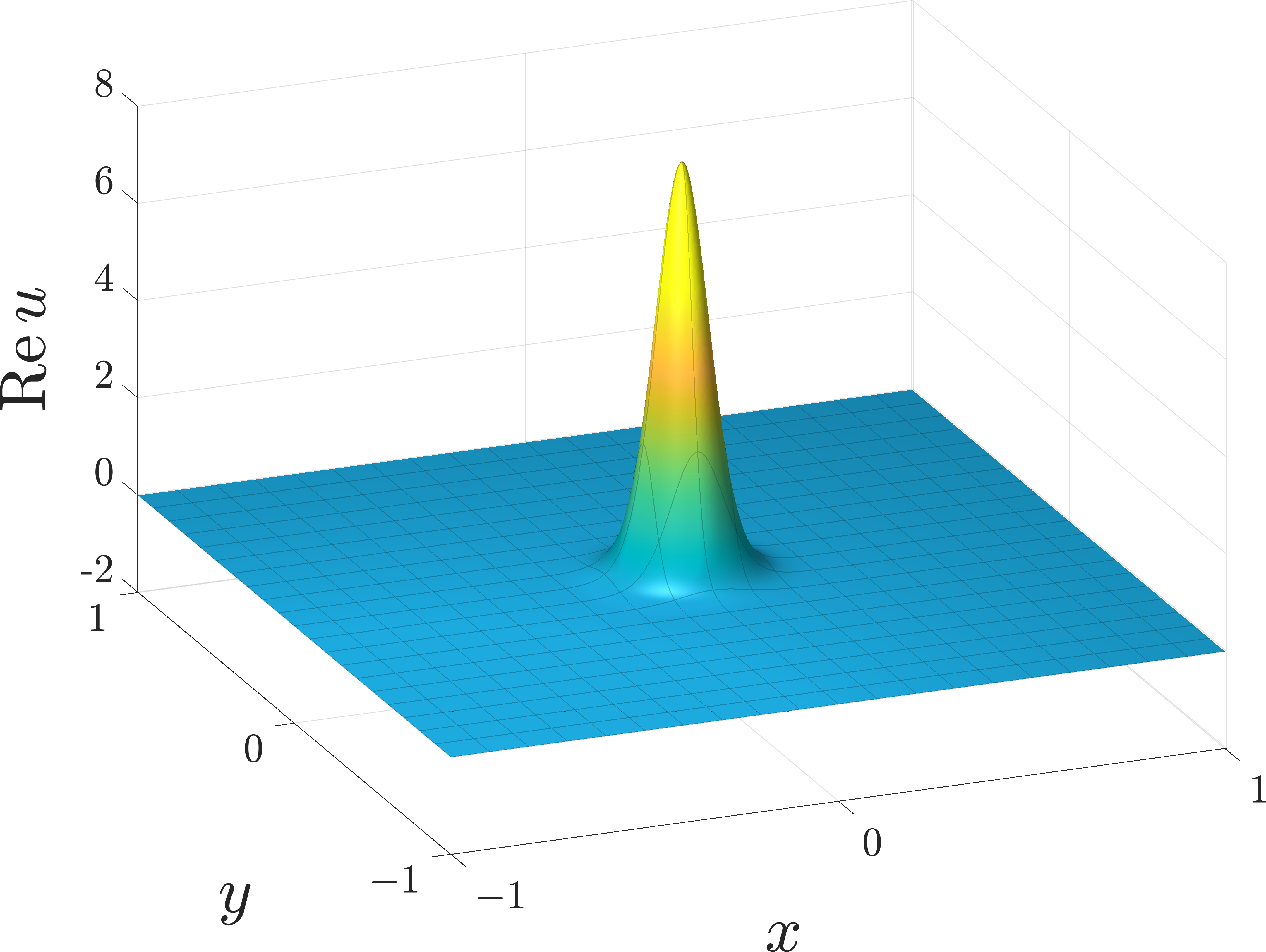}
    \caption{$t = 0$: ground state of the Gaussian potential}
    \label{fig:soln2d1}
  \end{subfigure}
  \hspace{0.5cm}
  \begin{subfigure}[t]{0.45\textwidth}
    \includegraphics[width=\textwidth]{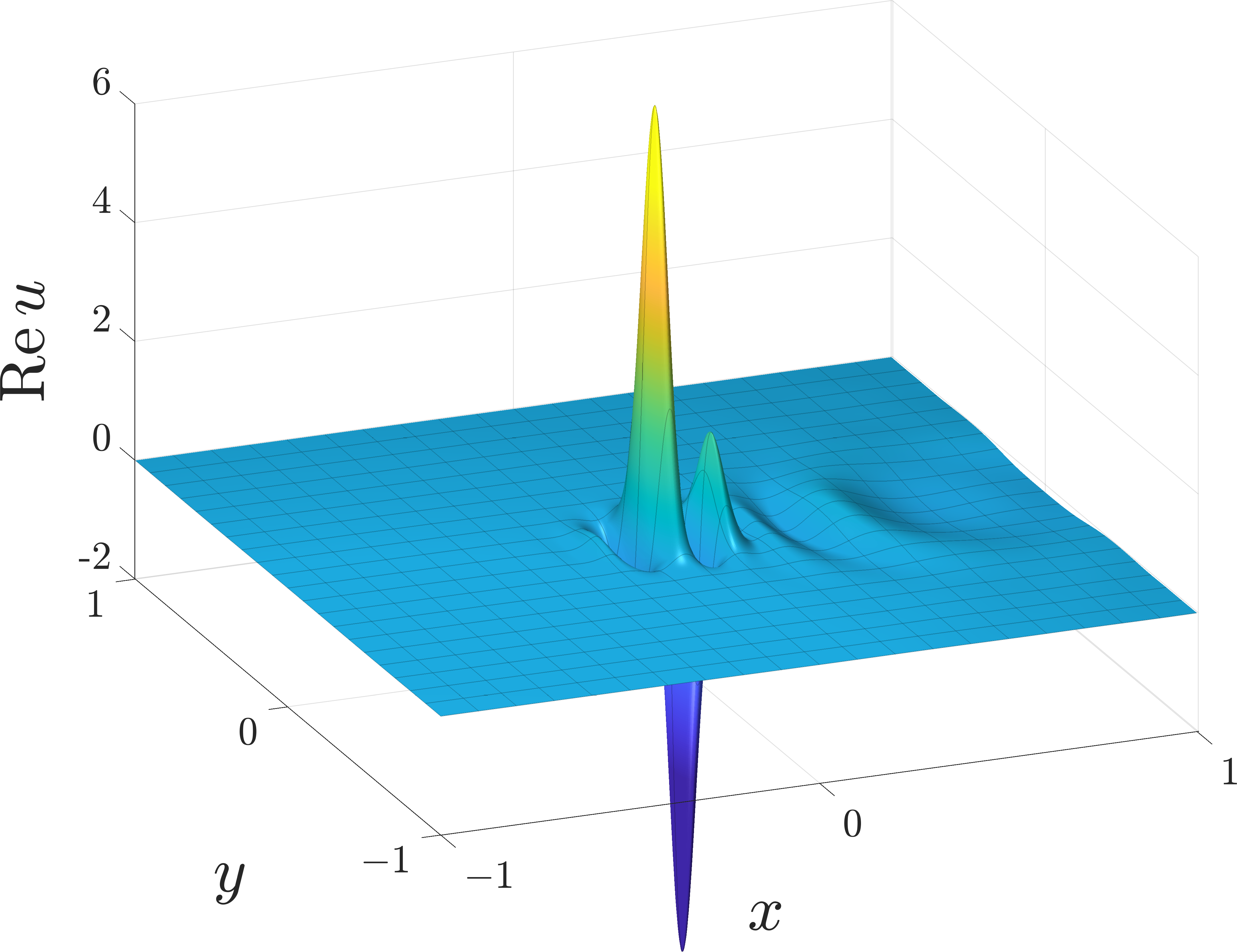}
    \caption{$t \approx 0.16$: $u$ is pushed to the right by the applied
    field}
    \label{fig:soln2d2}
  \end{subfigure}

  \vspace{0.3cm}
  
  \begin{subfigure}[t]{0.45\textwidth}
    \includegraphics[width=\textwidth]{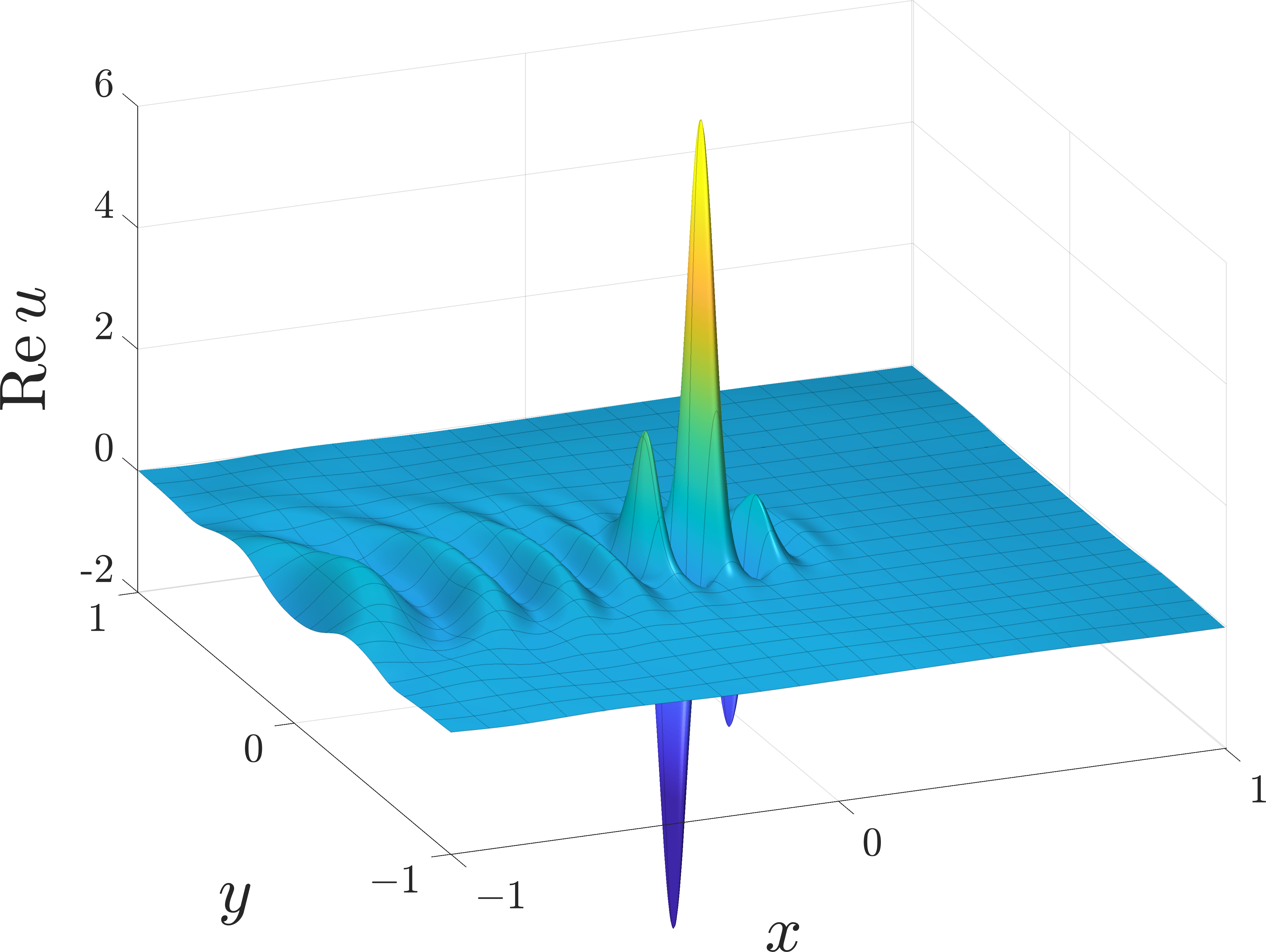}
    \caption{$t \approx 0.19$: $u$ is pushed to the left by the applied
    field}
    \label{fig:soln2d3}
  \end{subfigure}
  \hspace{0.5cm}
  \begin{subfigure}[t]{0.45\textwidth}
    \includegraphics[width=\textwidth]{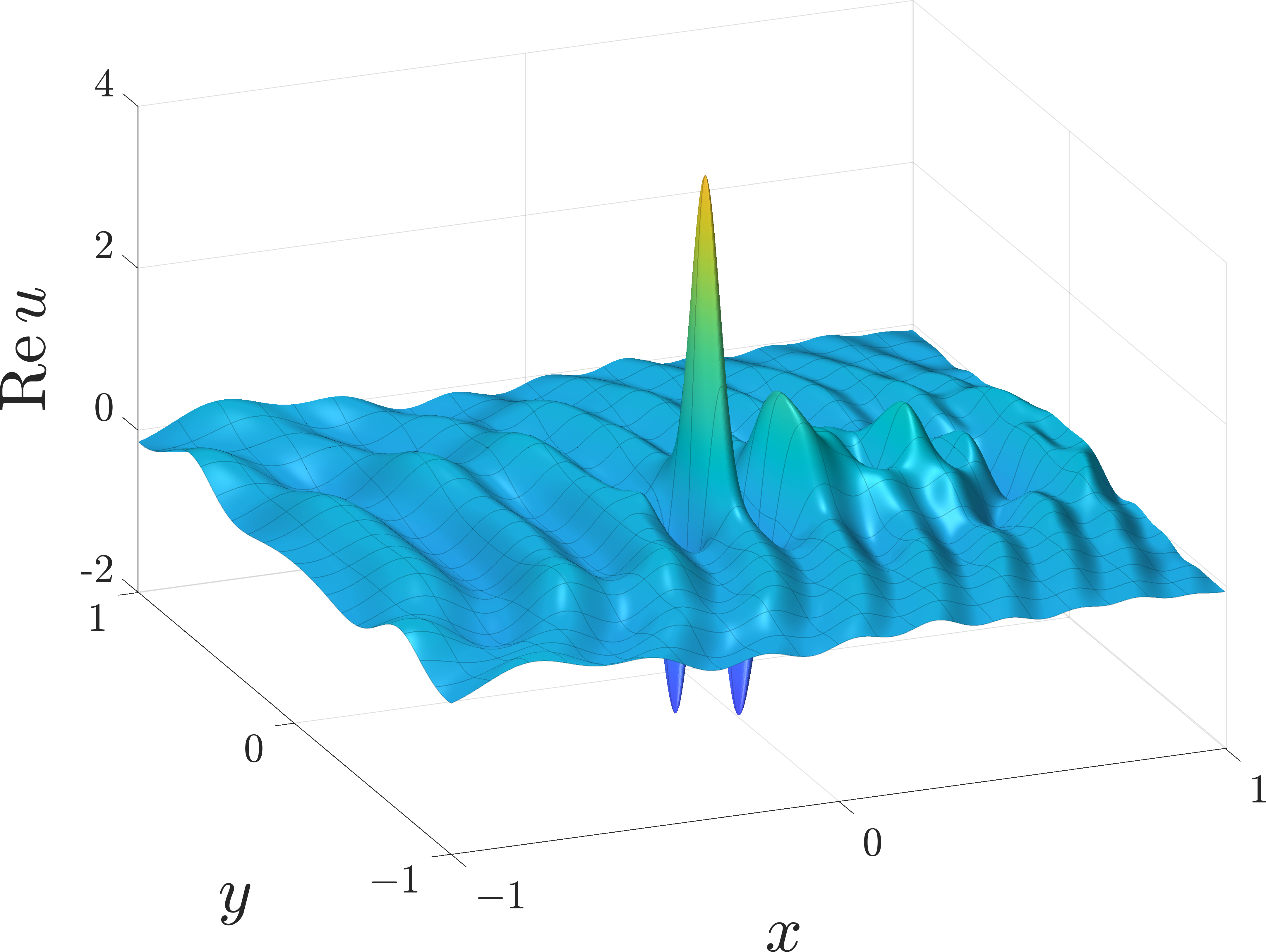}
    \caption{$t \approx 0.27$: $u$ is again pushed to the right and is dispersed throughout the domain}
    \label{fig:soln2d4}
  \end{subfigure}

    \caption{Example 4 is the two-dimensional analogue of Example 3,
    with the applied field $A(t)$ aligned with the $x$ axis. Plots are given of $\Re u(x,t)$ with $\omega =
    100$ at four time steps. The form of $A_1(t)$ is shown in the middle panel of
    Figure \ref{fig:1dsolnanda}.}
    \label{fig:2dsoln}
\end{figure}

We again use the eighth-order implicit multistep scheme and fix $M =
100$, $q = 10$, and $\nr = 0$. As in Example 3, in the complex-frequency
Fourier transform algorithm, $\calC$-type transforms and transforms
involving $\calC$-type nodes are applied using direct multiplication rather
than the Chebyshev interpolation scheme, since $\nr = 0$. We carry out
higher accuracy calculations with $\varepsilon = 10^{-10}$, $h_2 \approx
0.5$, and values of $\Delta t$ corresponding to 1000, 2000, 4000, \ldots 32000
time steps, and lower accuracy calculations with $\varepsilon =
10^{-5}$, $h_2 \approx 1.6$, and values of $\Delta t$ corresponding to
1000, 2000, 4000, \ldots 16000 time steps. 
In Figure \ref{fig:2derrvdt}, the final-time errors $\err(T)$, measured against a well-converged
reference solution, are plotted against
$\Delta t$ for several approximately logarithmically-spaced values 
of $h_1$. Timings for each choice of $h_1$ and both choices of $h_2$ are given in Table
\ref{tab:2dtimes}.

\begin{figure}[!htb]
  \centering

  \begin{subfigure}[t]{\textwidth}
    \includegraphics[width=\textwidth]{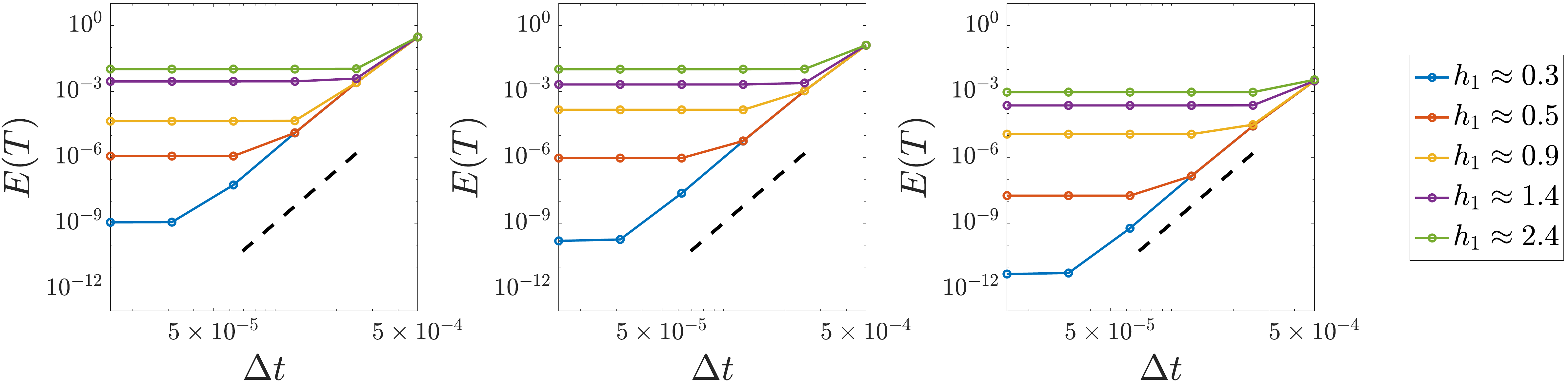}
    \caption{Higher accuracy experiments: $\varepsilon = 10^{-10}$ and
    $h_2 \approx 0.5$.}
    \label{fig:2derrvdteps10}
  \end{subfigure}
  \par\bigskip
  \begin{subfigure}[t]{\textwidth}
    \includegraphics[width=\textwidth]{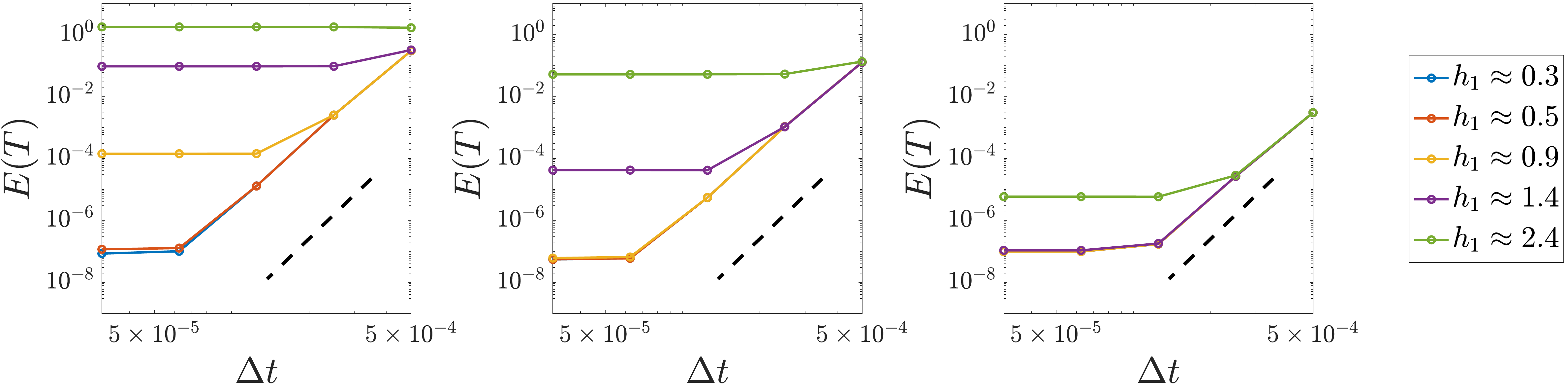}
    \caption{Lower accuracy experiments: $\varepsilon = 10^{-5}$ and
    $h_2 \approx 1.6$.}
    \label{fig:2derrvdteps5}
  \end{subfigure}
  \caption{Final time $L^2$ error of $u(x,t)$ against $\Delta t$ for several values of
      $h_1$ and $\omega =$ $50$ (left), $100$ (middle), and $200$ (right) in Example 4. Eighth-order convergence is indicated by the black
  dashed lines.}
  \label{fig:2derrvdt}
\end{figure}

\begin{table}[ht]
  \centering
  \begin{tabular}{|c|r|r|r|r|r|}
    \hline
    $h_1 \approx $ & 0.3 & 0.5 & 0.9 & 1.4 & 2.4 \\ \hline
    Time steps per second, $h_2 \approx 0.5$ & 14 & 24 & 40 & 66 & 96 \\
    \hline
    Time steps per second, $h_2 \approx 1.6$ & 44 & 70 & 92 & 145 & 193 \\
    \hline
  \end{tabular}
  \caption{Number of time steps per second for the experiments in Example 4.}
  \label{tab:2dtimes}
\end{table}

We remind the reader that increasing $\varepsilon$ also increases
$H_1$ and $H_2$, so that the spectral Green's function is less oscillatory
along $\Gamma$ (see Figure
\ref{fig:specgfun}). Thus $h_1$ and
$h_2$ should be increased with $\varepsilon$ to achieve the
fastest computation for a given accuracy. In the experiment with $\omega
= 100$, for example, to obtain approximately 10 digits of accuracy we
set $\varepsilon = 10^{-10}$, $h_1 \approx 0.3$, $h_2 \approx 0.5$ and take $8000$ time steps at 14 time steps per
second, whereas to obtain approximately 5 digits of accuracy, we can
set $\varepsilon = 10^{-5}$, $h_1 \approx 1.4$, $h_2 \approx 1.6$ and
take $4000$ time steps at 145 times steps per second.

\section{Conclusion} \label{sec:conclusion}

We have introduced a Volterra integral equation-based numerical method
for the periodic and free space TDSE with a spatially-uniform vector potential.
The method offers several notable advantages compared with finite difference
methods and methods based on applying the unitary single time
step propagator. Namely, it permits inexpensive high-order implicit time
stepping, naturally includes the case of time-dependent scalar
potentials, and obviates the need for artificial boundary conditions
in the free space case.

The Volterra integral equation involves 
a spacetime history-dependent volume integral,
and we have used a Fourier method to avoid the computational cost and
memory associated with its naive evaluation. This leads to a fast
and memory-efficient FFT-based method, but requires the solution to be
resolvable on a uniform grid in the physical domain. A new strategy
will be required to make the integral equation formulation compatible
with spatially-adaptive discretizations.

We note lastly that in practical applications, the scalar potential $V$
may be replaced by a somewhat more general object. In time-dependent
density functional theory, for example, the potential is nonlinear and
may be nonlocal. The integral equation approach enjoys
several advantages over PDE-based methods in these cases, which will be
explored in future work.

\section*{Acknowledgements}

We thank Angel Rubio and Umberto de Giovannini for many useful
discussions. J.K. was supported in part by the Research Training Group
in Modeling and Simulation funded by the National Science Foundation via
grant RTG/DMS-1646339.
The Flatiron Institute is a division of the Simons Foundation.

\begin{appendices}

  \section{Proof of Lemma \ref{lem:uentire} for $d = 1$}\label{sec:pflemuentire}

  \begin{proof}
  Fix $t \in [0,T]$ and let $\uhat(\zeta,t)$ be defined by the
  formula \eqref{eq:uhatzinteq}.
  It is well-defined and continuous in $\zeta$ because $\uzhat(\zeta)$ and
  $\Vuhat(\zeta,t)$ are entire functions of $\zeta$, and
  provides a proper extension of $\uhat(\xi,t)$ into the complex plane.
  The integral of $\uhat(\zeta,t)$ around any closed contour in
  $\CC$ is zero---we can interchange the order of
  integration using Fubini's theorem, and apply
  Cauchy's theorem to the analytic integrand---so it follows from Morera's theorem that
  $\uhat(\zeta,t)$ is entire in $\zeta$.

  To obtain \eqref{eq:uciftrep} and \eqref{eq:Vuciftrep}, we write
  \[\int_\Gamma e^{i \zeta x} \wh{f}(\zeta) \, d \zeta = \lim_{K \to
  \infty} \int_{\Gamma_K} e^{i \zeta x} \wh{f}(\zeta) \, d \zeta\]
  where $\Gamma_K$ is the truncation of \eqref{eq:gammadef} to $\tau \in [-K,K]$. We fix $x\in\RR$ and choose $f(x)$ to be either
  $u(x,t)$ or $(Vu)(x,t)$.
  By Cauchy's theorem, the classical inverse Fourier
  transforms \eqref{eq:iftureal} and \eqref{eq:iftVureal} are equal to
  those taken along the
  deformed contour $(-\infty,-K) \cup (-K,-K+iH) \cup \Gamma_K
\cup (K-iH,K) \cup (K,+\infty)$ for any $K$.
  The contributions from $(-\infty,-K)$ and $(K,\infty)$ vanish as
  $K\to\infty$ because $u$ and $Vu$, and therefore $\uhat$ and $\Vuhat$,
  are in the Schwartz space. Thus to
prove \eqref{eq:uciftrep} and \eqref{eq:Vuciftrep}, we only need to show
  that the contributions from the two vertical
  segments $(-K,-K+iH)$ and $(K-iH,K)$ vanish in that limit, i.e.\
  \[\lim_{K \to \infty} \int_0^H e^{i (K - i \eta) x} \wh{f}(K - i
    \eta) \, d \eta = \lim_{K \to \infty} \int_0^H e^{i (-K + i \eta)
  x} \wh{f}(-K + i \eta) \, d \eta = 0\]
  for $f(x) = u(x,t)$ and $f(x) = (Vu)(x,t)$. For the latter, we write
  \begin{align*}
    \int_0^H e^{i (K - i \eta) x} \Vuhat(K - i \eta,t) \, d \eta &=
  \int_0^H e^{i (K - i \eta) x} \int_{-\infty}^\infty e^{-i (K-i\eta) y}
    (Vu)(y,t) \, dy \, d \eta \\ 
    &= \int_{-\infty}^\infty e^{i K (x-y)} (V u)(y,t) \int_0^H e^{\eta
    (x-y)} d \eta \, dy.
  \end{align*}
  Here, noting that $V$ is smooth and compactly supported, we have used
  Fubini's theorem to switch the order of integration. The inner
  integral is a smooth function, so the outer integral is the Fourier
  transform of a smooth, compactly supported function, evaluated at $K$. The desired
  result then follows from the Riemann--Lebesgue lemma.

  For $f(x) = u(x,t)$, we instead use \eqref{eq:uhatAinteq} to write
  \begin{multline*}
    \int_0^H e^{i (K - i \eta) x} \uhat(K - i \eta,t) \, d \eta =
    \int_0^H e^{i (K - i \eta) x} \wh{G}(K-i\eta,t,0) \wh{u}_0(K-i\eta) \, d \eta \\
    -  i \int_0^H e^{i (K - i \eta) x} \int_0^t \wh{G}(K-i\eta,t,s)
    \wh{(Vu)}(K-i\eta,s) \, ds \, d\eta
  \end{multline*}
  Let us consider the first term on the right hand side; the second may
  be dealt with by a similar approach. We again use Fubini's theorem to
  obtain
\be
  \int_0^H e^{i (K - i \eta) x} \wh{G}(K-i\eta,t,0) \wh{u}_0(K-i\eta) \,
  d \eta
  =
  e^{i K x} \int_{-\infty}^\infty e^{-i K y} u_0(y) \int_0^H
  e^{\eta (x-y)} \wh{G}(K-i\eta,t,0) \, d \eta \, dy~.
  \label{fubini}
\ee
We have $|e^{\eta (x-y)} \wh{G}(K-i\eta,t,0)| \le e^{H(|x|+1+\phimax)}$
for all $y\in[-1,1]$ and $\eta\in[0,H]$, where
$\phimax$ is given by \eqref{eq:phimax}. Therefore the inner integral
defines a bounded, continuous function of $y \in [-1,1]$. Since $u_0$ is
a smooth function supported on $[-1,1]$, the outer integral is the
Fourier transform of an integrable function evaluated at $K$, and the
result again follows from the Riemann--Lebesgue lemma.
\end{proof}

\end{appendices}

\bibliographystyle{ieeetr}
{\footnotesize \bibliography{kbg_tdse}}

\end{document}